\tikzstyle{every node}=[circle, draw, fill=white,inner sep=0pt, minimum width=4pt]
\tikzstyle{nodelabel}=[rounded corners,fill=none,inner sep=5pt,draw=none]
\tikzstyle{matching} = [ultra thick]
\tikzset{snake/.style={decorate, decoration=snake}}
\begin{document}

\newcommand{\mcg}{matching covered graph}
\newcommand{\mc}{matching covered}

\title{Minimal Braces\footnote{This research is supported by
Fundect-MS, CNPq and
FAPESP (2018/04679-1) of Brazil, and by
the Austrian Science Foundation FWF (START grant Y463).}}
\author{Phelipe A. Fabres$^{a}$ \and
            Nishad Kothari$^{b}$ \and
            Marcelo H. de Carvalho$^{a}$ \\
{\small $^{a}$ FACOM --- UFMS, Campo Grande, Brasil}\\
{\small $^{b}$ IC --- UNICAMP, Campinas, Brasil}\\
}


\UKvardate
\date{1 September, 2020}
  \maketitle 
  \thispagestyle{empty}

\begin{center}
{\Large In memory of Robin Thomas}
\end{center}

\begin{abstract}
McCuaig (2001, Brace Generation, J. Graph Theory 38: 124-169)
proved a generation theorem for braces, and used it as the principal induction tool
to obtain a structural characterization of Pfaffian braces
(2004, P{\'o}lya's Permanent Problem, Electronic J. Combinatorics 11: R79).

\smallskip
A brace is {\it minimal} if deleting any edge results in a graph that is not a brace.
From McCuaig's brace generation theorem, we derive our main theorem
that may be viewed as an induction tool for minimal braces.
As an application,
we prove that a minimal brace of order~$2n$ has size at most~$5n-10$, when $n \geq 6$,
and we provide a complete characterization of minimal braces that meet this
upper bound.

\smallskip
A similar work has already been done in the context of minimal bricks by
Norine and Thomas (2006, Minimal Bricks, J. Combin. Theory Ser. B 96: 505-513)
wherein they deduce the main result from the brick
generation theorem due to the same authors
(2007, Generating Bricks, J. Combin. Theory Ser. B 97: 769-817).
\end{abstract}



\section{Bipartite matching covered graphs}
\label{sec:bip-mcgs}

For general graph-theoretic notation and terminology, we refer the reader
to Bondy and Murty \cite{bomu08}.
All graphs considered in this paper are finite and loopless; however, we do allow multiple (i.e., parallel) edges.
For a graph $G$, its {\it order} is the number of vertices (i.e., $|V(G)|$),
and its {\it size} is the number of edges (i.e., $|E(G)|$).
For a subset $X$ of $V(G)$, we denote by~$\partial(X)$
the cut associated with $X$, and we refer to $X$ and $\overline{X}:=V(G)-X$
as the {\it shores} of $\partial(X)$.
Thus $\partial(X)$ is the set of edges that have exactly one end in either shore.
A cut is {\it trivial} if either of its shores is a singleton.
The graph obtained by contracting the shore~$X$ to a single vertex~$x$
is denoted by $G/(X \rightarrow x)$, or simply by $G/X$.
The two graphs $G/X$ and $G/\overline{X}$ are called
the {\it $\partial(X)$-contractions} of $G$.

\smallskip
A connected graph~$G$ is {\it $k$-extendable} if it has a matching of cardinality $k$,
and if each such matching extends to (i.e., is a subset of) a perfect matching of~$G$.
For a comprehensive treatment of matching theory and its origins, we refer the
reader to Lov{\'a}sz and Plummer~\cite{lopl86}.
All graphs considered in this paper are $1$-extendable, and we shall
instead refer to them as {\it matching covered graphs}.
It is easily seen that these graphs (of order four or more) are \mbox{$2$-connected}.
Also, for a graph~$G$, we let $n_G:=\frac{|V(G)|}{2}$ and $m_G:=|E(G)|$;
whence $G$ has order $2 \cdot n_G$ and size $m_G$.

\smallskip
A cut $\partial(X)$ of $G$ is {\it tight} if $|M \cap \partial(X)|=1$ for each perfect matching $M$
of~$G$. A \mcg\ that is free of nontrivial tight cuts
is called a {\it brace} if it is bipartite, or otherwise a {\it brick}.
It is easily verified that if $\partial(X)$ is a nontrivial tight cut of a \mcg~$G$
then each $\partial(X)$-contraction of~$G$ is a \mcg\ of strictly smaller order.
This observation leads to a decomposition of any \mcg\ into a list
of bricks and braces; this procedure is known as a {\it tight cut decomposition} of $G$.
Clearly, a graph may admit several tight cut decompositions.
However, Lov{\'a}sz~\cite{lova87} proved the remarkable result that any two tight cut decompositions
of a \mcg~$G$ yield the same list of bricks and braces
(except possibly for the multiplicities of edges).
We remark that $G$ is bipartite if and only if its tight cut decomposition yields only braces
(i.e., it yields no bricks).
The bipartite \mcg~$Q_{10}$, shown in Figure~\ref{fig:Q_10}, has a nontrivial
tight cut $\partial(X)$, and each of its $\partial(X)$-contractions is isomorphic to the brace~$K_{3,3}$.

\begin{figure}[!htb]
\centering
\subfigure[ $\partial(X)$ is a nontrivial tight cut in $Q_{10}$]
{
\begin{tikzpicture}[scale=0.7]

\draw (2,3)node[nodelabel]{$X_-$};
\draw (2,-1)node[nodelabel]{$X_+$};
\draw (8,3)node[nodelabel]{$\overline{X}_+$};
\draw (8,-1)node[nodelabel]{$\overline{X}_-$};

\draw (0.6,2.4) -- (3.4,2.4) -- (3.4,1.6) -- (0.6,1.6) -- (0.6,2.4);

\draw (-0.4,0.4) -- (4.4,0.4) -- (4.4,-0.4) -- (-0.4,-0.4) -- (-0.4,0.4);

\draw (5.6,2.4) -- (10.4,2.4) -- (10.4,1.6) -- (5.6,1.6) -- (5.6,2.4);

\draw (6.6,0.4) -- (9.4,0.4) -- (9.4,-0.4) -- (6.6,-0.4) -- (6.6,0.4);

\draw (1,2) -- (0,0) -- (3,2) -- (2,0) -- (1,2) -- (4,0) -- (3,2);

\draw (7,0) -- (6,2) -- (9,0) -- (8,2) -- (7,0) -- (10,2) -- (9,0);

\draw (0,0) -- (6,2);
\draw (2,0) -- (8,2);
\draw (4,0) -- (10,2);

\draw (1,2)node{};
\draw (3,2)node{};
\draw (6,2)node{};
\draw (8,2)node{};
\draw (10,2)node{};

\draw (0,0)node[fill=black]{};
\draw (2,0)node[fill=black]{};
\draw (4,0)node[fill=black]{};
\draw (7,0)node[fill=black]{};
\draw (9,0)node[fill=black]{};

\end{tikzpicture}
\label{fig:Q_10}
}
\hspace*{0.2in}
\subfigure[Edge $e$ is not superfluous in $Q_{10}^+$]
{
\begin{tikzpicture}[scale=0.7]

\draw[ultra thick] (3,2) -- (7,0);
\draw (3.8,1.9)node[nodelabel]{$e$};

\draw (2,3)node[nodelabel]{$A_1$};
\draw (2,-1)node[nodelabel]{$B_1$};
\draw (8,3)node[nodelabel]{$A_2$};
\draw (8,-1)node[nodelabel]{$B_2$};

\draw (0.6,2.4) -- (3.4,2.4) -- (3.4,1.6) -- (0.6,1.6) -- (0.6,2.4);

\draw (-0.4,0.4) -- (4.4,0.4) -- (4.4,-0.4) -- (-0.4,-0.4) -- (-0.4,0.4);

\draw (5.6,2.4) -- (10.4,2.4) -- (10.4,1.6) -- (5.6,1.6) -- (5.6,2.4);

\draw (6.6,0.4) -- (9.4,0.4) -- (9.4,-0.4) -- (6.6,-0.4) -- (6.6,0.4);

\draw (1,2) -- (0,0) -- (3,2) -- (2,0) -- (1,2) -- (4,0) -- (3,2);

\draw (7,0) -- (6,2) -- (9,0) -- (8,2) -- (7,0) -- (10,2) -- (9,0);

\draw (0,0) -- (6,2);
\draw (2,0) -- (8,2);
\draw (4,0) -- (10,2);

\draw (1,2)node{};
\draw (3,2)node{};
\draw (6,2)node{};
\draw (8,2)node{};
\draw (10,2)node{};

\draw (0,0)node[fill=black]{};
\draw (2,0)node[fill=black]{};
\draw (4,0)node[fill=black]{};
\draw (7,0)node[fill=black]{};
\draw (9,0)node[fill=black]{};

\end{tikzpicture}
\label{fig:Q_10^+with-non-superfluous-edge}
}
\caption{The graphs $Q_{10}$ and $Q_{10}^+:=Q_{10}+e$}
\label{fig:tight-cut-and-non-superfluous-edge}
\end{figure}
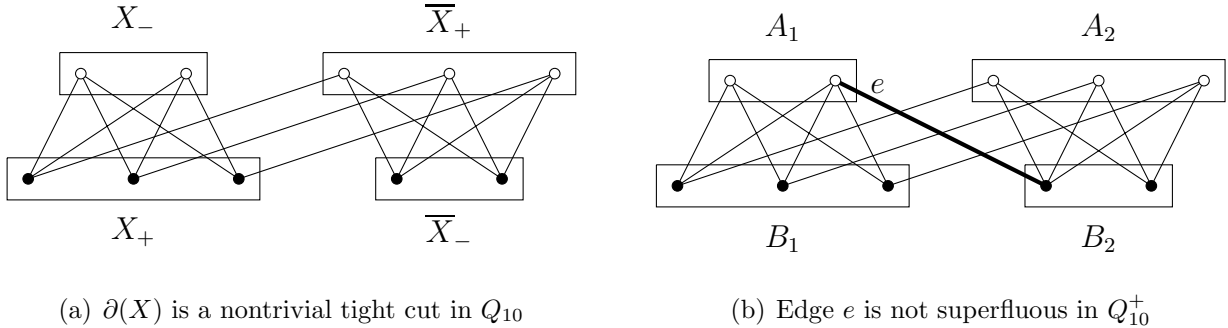

\smallskip
Several important properties of a \mcg~$G$ may be deduced by analysing its bricks and braces.
(For instance, $G$ is Pfaffian if and only if each of its bricks and braces is Pfaffian;
see \cite{vaya89,lire91}.) Consequently, researchers were led to
gain a deeper understanding of bricks and braces. McCuaig \cite{mccu01} established a generation
theorem for simple braces, and used this as the principal induction tool to obtain
a structural characterization of Pfaffian braces \cite{mccu04}.
Robertson, Seymour and Thomas~\cite{rst99} arrived at the same characterization
using a different approach. (These groundbreaking works led to a polynomial-time algorithm for deciding whether
or not a given bipartite graph is Pfaffian; see \cite{mrst97}.)

\smallskip
A brace is {\it minimal} if deleting any edge results in a graph that is not a brace.
The aforementioned McCuaig's Theorem is a powerful induction tool for the class of simple braces.
The object of this paper is to use McCuaig's Theorem to derive an induction
tool for the class of minimal braces --- a proper subset of the class of simple braces.
In the following three subsections, we introduce the necessary terminology to make this more precise.

\smallskip
A similar work has already been done in the context of minimal bricks by
Norine and Thomas \cite{noth06},
wherein they deduce the main result from the brick
generation theorem due to the same authors \cite{noth07}.

\subsection{Braces}
\label{sec:braces}

For a connected bipartite graph~$G$, we adopt the notation~$G[A,B]$ to denote its color classes.
We will generally use letters $A$ and $B$ to denote the color classes; sometimes we may instead use $A'$
and $B'$.
As shown in Figure~\ref{fig:minimality-preserving-pair}, members of $A$ (or of~$A'$) will be denoted using
letters $a$~and~$w$ (with subscripts and/or superscripts) and will be depicted using hollow nodes;
likewise, members of $B$ (or of~$B'$) will be denoted using letters $b$~and~$u$
(with subscripts and/or superscripts)
and will be depicted using solid nodes.

\smallskip
The neighborhood of a set of vertices $Z$ is denoted by~$N_G(Z)$.
The following may be deduced from the well-known Hall's Theorem.

\begin{prp}
\label{prp:bip-mcg-characterization}
For a connected bipartite graph $G[A,B]$, where \mbox{$|A| = |B|$},
the following are equivalent:
\begin{enumerate}[(i)]
\item $G$ is matching covered,
\item $|N_G(Z)| \geq |Z|+1$ for every nonempty proper subset $Z$ of $A$, and
\item $G-a-b$ has a perfect matching for each pair of vertices $a \in A$ and $b \in B$. \qed
\end{enumerate}
\end{prp}

Suppose that $X$ is an odd subset of the vertex set of a connected bipartite graph $G[A,B]$.
Then one of the two sets $A \cap X$ and $B \cap X$ is larger than the other;
the larger set, denoted~$X_+$, is called the {\it majority part} of~$X$;
the smaller set, denoted~$X_-$, is called the {\it minority part} of~$X$.
The following proposition provides a convenient way of visualizing
tight cuts in bipartite graphs. It is easily proved.
(See Figure~\ref{fig:Q_10} for an example.)

\begin{prp}
\label{prp:tight-cuts-in-bip-mcgs}
A cut $\partial(X)$ of a bipartite \mcg~$G[A,B]$ is tight if and only if the following hold:
\begin{enumerate}[(i)]
\item $|X|$ is odd and $|X_+|=|X_-|+1$; consequently $|\overline{X}_+|=|\overline{X}_-|+1$; and
\item there are no edges between $X_-$ and $\overline{X}_-$. \qed
\end{enumerate}
\end{prp}

Recall that a brace is a bipartite matching covered graph that is free of nontrivial tight cuts.
The following characterization of braces may be deduced from
Proposition~\ref{prp:tight-cuts-in-bip-mcgs}.

\begin{prp}
\label{prp:brace-characterization}
For a connected bipartite graph $G[A,B]$ of order six or more, where $|A|=|B|$,
the following are equivalent:
\begin{enumerate}[(i)]
\item $G$ is a brace,
\item $|N_G(Z)| \geq |Z|+2$ for every nonempty subset $Z$ of $A$ such that $|Z| < |A|-1$,
\item $G-a_1-a_2-b_1-b_2$ has a perfect matching for any four distinct vertices $a_1,a_2 \in A$
and $b_1,b_2 \in B$,
\item $G$ is $2$-extendable. \qed
\end{enumerate}
\end{prp}

Thus braces (of order at least four) are precisely those bipartite graphs that are $2$-extendable.
The following immediate consequence of
Propositions~\ref{prp:bip-mcg-characterization}~and~\ref{prp:brace-characterization}
is worth noting.

\begin{cor}
\label{cor:adding-an-edge}
Let $G$ denote a bipartite graph obtained from a graph~$H$ by adding an edge.
If $H$ is \mc\ then so is $G$.
Furthermore, if $H$ is a brace then so is $G$. \qed
\end{cor}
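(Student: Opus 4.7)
The plan is to deduce both assertions from the characterizations~(iii) of Propositions~\ref{prp:bip-mcg-characterization} and~\ref{prp:brace-characterization}, exploiting the trivial monotonicity that adding edges cannot destroy any perfect matching of a vertex-deleted subgraph. Since $G$ is bipartite and differs from $H$ by a single edge, $H$ is bipartite as well, and the added edge must join a vertex of $A$ to a vertex of $B$; in particular $H$ and $G$ share the same bipartition $[A,B]$ with $|A|=|B|$. Note also that $G$ is connected, being a supergraph of the (necessarily connected) matching covered graph $H$.

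For the first assertion, fix any $a \in A$ and $b \in B$. By Proposition~\ref{prp:bip-mcg-characterization}(iii) applied to $H$, there is a perfect matching $M$ of $H-a-b$. Since $G-a-b$ has the same vertex set as $H-a-b$ and edge set containing that of $H-a-b$, the matching $M$ is also a perfect matching of $G-a-b$. Applying Proposition~\ref{prp:bip-mcg-characterization}(iii) to $G$ then yields that $G$ is matching covered.

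For the second assertion, assume first that $G$ has order at least six so that Proposition~\ref{prp:brace-characterization} is directly applicable. Given any four distinct vertices $a_1,a_2 \in A$ and $b_1,b_2 \in B$, Proposition~\ref{prp:brace-characterization}(iii) applied to the brace $H$ furnishes a perfect matching of $H - a_1 - a_2 - b_1 - b_2$, which is automatically a perfect matching of $G - a_1 - a_2 - b_1 - b_2$ by the same monotonicity. Proposition~\ref{prp:brace-characterization}(iii) applied in the converse direction to $G$ concludes that $G$ is a brace. The remaining case $n_G = 2$ forces $H$ to be $K_{2,2}$ (possibly with parallel edges), and adding an edge to it clearly produces another brace.

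I do not anticipate any genuine obstacle; the corollary is the formal record of the fact that the perfect-matching conditions in the above characterizations are monotone under edge addition, so the only care needed is to note that the bipartition and the equality $|A|=|B|$ are preserved, and to handle the order-four edge case separately.
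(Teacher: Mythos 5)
Your proof is correct and takes the same route the paper intends: the corollary is recorded as an immediate consequence of Propositions~\ref{prp:bip-mcg-characterization} and~\ref{prp:brace-characterization}, precisely via the monotonicity of the perfect-matching conditions (iii) under edge addition, which is exactly what you verify. The only (trivial) omission is the order-two case, where $H=K_2$ and the added edge is a parallel edge; it is as immediate as the order-four case you do handle.
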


The braces $K_2$ and $C_4$ are the only simple bipartite {\mcg}s of order at most four.
For a bipartite \mcg~$G$ of order six or more, it is easy to show that if $G$ has a $2$-vertex-cut
then $G$ has a nontrivial tight cut. This implies the following.

\begin{prp}
Every brace, of order six or more, is $3$-connected. \qed
\end{prp}

A vertex is {\it cubic} if its degree equals three; it is {\it noncubic}
if it has degree four or more. A graph is {\it cubic} if each of its vertices is cubic,
and it is {\it noncubic} if it has a noncubic vertex.

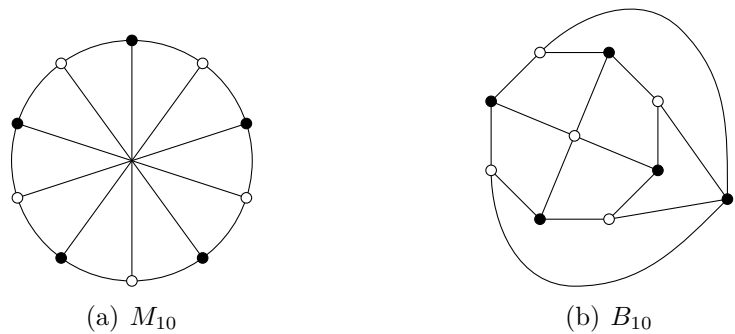
\begin{figure}[!htb]
\centering
\subfigure[$M_{10}$]{
\begin{tikzpicture}[scale=0.8]

\draw (0:0)circle(2);
\draw (90:2)node[fill=black]{} -- (270:2)node{};
\draw (126:2)node{} -- (306:2)node[fill=black]{};
\draw (162:2)node[fill=black]{} -- (342:2)node{};
\draw (198:2)node{} -- (18:2)node[fill=black]{};
\draw (234:2)node[fill=black]{} -- (54:2)node{};

\end{tikzpicture}
\label{fig:M_10}
}
\hspace*{1in}
\subfigure[$B_{10}$]{
\begin{tikzpicture}[scale=0.8]

\draw (337.5:2.75) to [out=90,in=315] (45:2.5) to [out=135,in=45] (112.5:1.5);
\draw (337.5:2.75) to [out=225,in=0] (270:2.5) to [out=180,in=270] (202.5:1.5);

\draw (337.5:2.75) -- (292.5:1.5);
\draw (337.5:2.75) -- (22.5:1.5);

\draw (0:0) -- (67.5:1.5);
\draw (0:0) -- (157.5:1.5);
\draw (0:0) -- (247.5:1.5);
\draw (0:0) -- (337.5:1.5);

\draw (22.5:1.5) -- (67.5:1.5) -- (112.5:1.5) -- (157.5:1.5) -- (202.5:1.5) -- (247.5:1.5) -- (292.5:1.5) -- (337.5:1.5) -- (22.5:1.5);

\draw (22.5:1.5)node{};
\draw (112.5:1.5)node{};
\draw (202.5:1.5)node{};
\draw (292.5:1.5)node{};
\draw (0:0)node{};

\draw (67.5:1.5)node[fill=black]{};
\draw (157.5:1.5)node[fill=black]{};
\draw (247.5:1.5)node[fill=black]{};
\draw (337.5:1.5)node[fill=black]{};
\draw (337.5:2.75)node[fill=black]{};

\end{tikzpicture}
\label{fig:B_10}
}
\caption{McCuaig braces of order ten}
\label{fig:Mccuaig-braces-order-ten}
\end{figure}

\smallskip
McCuaig \cite{mccu01} described three infinite families of simple braces:
prisms\footnote{McCuaig~\cite{mccu01} refers to `prisms' as `ladders'.},
M{\"o}bius ladders and biwheels.
A {\it biwheel} of order $2n$ (where $n \geq 4$), denoted $B_{2n}$,
is the simple bipartite graph obtained from the cycle graph~$C_{2n-2}$ by
adding two nonadjacent vertices --- each of which has degree
exactly $n-1$.
Observe that $B_{2n}$ has size $4n-4$.
The cube~$B_8$, shown in Figure~\ref{fig:B_8}, is the smallest biwheel.
Except for $B_8$, biwheels are noncubic; see Figure~\ref{fig:B_10}.

\smallskip
On the other hand, prisms and M{\"o}bius ladders are cubic;
we refer the interested reader to~\cite{clm15,mccu01} for descriptions of these families.
The cube~$B_8$ is the smallest prism.
The smallest M{\"o}bius ladders are $K_{3,3}$ and the brace~$M_{10}$
shown in Figure~\ref{fig:M_10}.
A {\it McCuaig brace} is any brace that is either a prism, or a M{\"o}bius ladder,
or a biwheel.

\subsection{McCuaig's Theorem}
\label{sec:McCuaig}

An edge~$e$ of a \mcg~$G$ is {\it removable} if $G-e$ is also \mc.
The following is easily deduced from
Propositions~\ref{prp:bip-mcg-characterization}~and~\ref{prp:brace-characterization}.

\begin{cor}
In a brace, of order six or more, every edge is removable. \qed
\end{cor}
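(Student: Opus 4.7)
The plan is to verify Hall's condition for $G - e$ via the equivalence (i) $\Leftrightarrow$ (ii) of Proposition~\ref{prp:bip-mcg-characterization}: I need $|N_{G-e}(Z)| \geq |Z|+1$ for every nonempty proper subset $Z \subseteq A$, where $e = a_1 b_1$ is the edge under consideration. The pivotal observation is that removing the single edge $e$ can only cost the vertex $b_1$ from any neighborhood, and only when $a_1 \in Z$; concretely, $N_G(Z) \setminus N_{G-e}(Z) \subseteq \{b_1\}$, and the containment is vacuous whenever $a_1 \notin Z$ since no edge incident with $Z$ is then removed.

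If $a_1 \notin Z$, then $N_{G-e}(Z) = N_G(Z)$ and Hall's inequality is inherited directly from the matching-coveredness of~$G$. If $a_1 \in Z$ and $|Z| \leq |A|-2$, the stronger brace inequality from Proposition~\ref{prp:brace-characterization}(ii) provides $|N_G(Z)| \geq |Z|+2$, which absorbs the potential loss of $b_1$ and still leaves $|N_{G-e}(Z)| \geq |Z|+1$.

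The delicate case is $a_1 \in Z$ with $|Z| = |A|-1$, say $Z = A \setminus \{a'\}$ with $a' \neq a_1$. Here the brace inequality on $Z$ no longer has enough room, so I would argue by contradiction: if $b_1 \notin N_{G-e}(Z)$, then every neighbor of $b_1$ in $G$ must lie in $\{a_1,a'\}$, forcing $|N_G(b_1)| \leq 2$. But the preceding proposition guarantees that $G$ is $3$-connected, which yields $|N_G(b_1)| \geq 3$: contradiction. Hence $b_1 \in N_{G-e}(Z)$ and $|N_{G-e}(Z)| = |B| = |Z|+1$, completing the verification.

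The principal subtlety of the proof is precisely this corner case $|Z| = |A|-1$: it cannot be settled by the brace inequality applied to $Z$ itself and must instead be reduced to a degree bound on the single vertex $b_1$. Once that reduction is made, the contradiction with $3$-connectedness is immediate.
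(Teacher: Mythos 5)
Your proof is correct. The paper leaves this deduction to the reader, citing only Propositions~\ref{prp:bip-mcg-characterization} and~\ref{prp:brace-characterization}; your Hall-condition argument via parts~(ii) of both is a natural way to carry it out, and you correctly identify that the corner case $|Z|=|A|-1$ cannot be settled by the brace inequality on~$Z$ itself and must instead be reduced to the minimum-degree bound $\deg_G(b_1)\geq 3$ furnished by $3$-connectedness. The only step to make explicit is that $G-e$ is connected, which is required to apply Proposition~\ref{prp:bip-mcg-characterization}; this is immediate since a $3$-connected graph on four or more vertices is bridgeless (or, alternatively, the Hall inequalities you establish already force connectedness).
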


Now let $G$ denote a brace of order six or more, and let $e \in E(G)$.
The bipartite \mcg~$G-e$ may not be a brace.
In particular,
one or both ends of $e$ may have degree precisely two (in $G-e$);
in order to recover a smaller brace, at the very least, we must get rid of vertices of degree two.
This brings us to the following notions of `bicontraction' and `retract'.

\smallskip
Let $G$ be a \mcg,
and let $v_0$ denote a vertex of degree two
that has two distinct neighbors, say $v_1$ and $v_2$.
The {\it bicontraction} of $v_0$ is the operation of contracting the two edges $v_0v_1$ and $v_0v_2$
incident with $v_0$. Note that $\partial(X)$, where $X:=\{v_0,v_1,v_2\}$,
is a tight cut of $G$. The graph obtained by bicontracting $v_0$ is the same as $G/X$
and is thus \mc. However, the bicontraction of a vertex of degree two in a simple
graph need not result in a simple graph. The {\it retract} of $G$, denoted~$\widehat{G}$,
is the \mcg\ obtained by bicontracting all its vertices of degree two that have two distinct neighbors.

\smallskip
For a brace~$G$ of order six or more,
a (removable) edge $e$ is {\it thin} if $\widehat{G-e}$ is also a brace.
Recently, Carvalho, Lucchesi and Murty \cite{clm15}
proved the following.

\begin{thm}
\label{thm:two-thin-edges-in-braces}
Every brace, of order six or more, has at least two thin edges.
\end{thm}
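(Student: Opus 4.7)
My plan is to translate the thinness property into a condition on tight cuts in $G-e$, and then derive the conclusion via an extremal/uncrossing argument.

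I first reduce to a tight-cut statement. Fix a brace $G[A,B]$ of order at least six, and let $e \in E(G)$; since every edge is removable, $G-e$ is a \mcg. The retract $\widehat{G-e}$ arises from $G-e$ by iterated bicontraction of degree-two vertices, and each such bicontraction is the contraction along a tight cut $\partial(\{v_0,v_1,v_2\})$ with $v_0$ a vertex of degree two. Hence $\widehat{G-e}$ is a brace iff the only nontrivial tight cuts of $G-e$ are those absorbed by this iterated bicontraction. I call any other nontrivial tight cut of $G-e$ \emph{substantial}: then $e$ is non-thin iff $G-e$ admits a substantial tight cut $\partial(X_e)$. Since $G$ itself has no nontrivial tight cut, Proposition~\ref{prp:tight-cuts-in-bip-mcgs} forces $e$ to have one end in $(X_e)_-$ and the other in $(\overline{X_e})_-$ for any such witness; otherwise the same $\partial(X_e)$ would already be a nontrivial tight cut of $G$.

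Next I argue by contradiction: suppose $G$ has at most one thin edge. Each of the other $m_G-1$ edges furnishes a substantial witness $\partial(X_e)$, which I choose to minimise $|X_e|$ over all substantial tight cuts of $G-e$, and (replacing by the complement if necessary) with $|X_e| \leq n_G$. I would then use the standard uncrossing identity for tight cuts (if $\partial(X)$ and $\partial(Y)$ are tight and cross, so are $\partial(X \cap Y)$ and $\partial(X \cup Y)$) to organise the witnesses into a laminar family and focus on a pair $(e^{*},X^{*})$ with $|X^{*}|$ globally minimum.

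Since every brace of order at least six is 3-connected, the smallest possible shore of a substantial tight cut satisfies $|X^{*}| \geq 5$: the only candidates with $|X^{*}| = 3$ are the degree-two triangles, which are absorbed by a single bicontraction and are therefore not substantial. This leaves genuine structure inside $X^{*}$, and in particular some edges of $G$ with both ends in $X^{*}$. Using Proposition~\ref{prp:brace-characterization} and Corollary~\ref{cor:adding-an-edge}, I would then show that two distinct such edges are forced to be thin --- because any substantial witness for a non-thin edge $f$ inside $X^{*}$ would, by the minimality of $|X^{*}|$ and by uncrossing with $\partial(X^{*})$, produce a substantial tight cut with strictly smaller shore in $G-f$. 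The main obstacle I anticipate is precisely this final step: extracting \emph{two} thin edges (not merely one) inside $X^{*}$, which will require careful handling of the boundary case $|X^{*}|=5$ and of witnesses whose minority parts intersect $(X^{*})_{-}$. The 3-connectivity of $G$, together with the parity constraints of Proposition~\ref{prp:tight-cuts-in-bip-mcgs}(i), is where I expect the real work to happen.
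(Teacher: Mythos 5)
Note that this paper does not prove Theorem~\ref{thm:two-thin-edges-in-braces}; the result is quoted from Carvalho, Lucchesi and Murty~\cite{clm15}, so there is no paper-internal argument to compare against. Assessing your sketch on its own terms, the reformulation of thinness as the absence of a ``substantial'' tight cut in $G-e$ is a sensible starting point, and the observation that shores of size three correspond exactly to degree-two vertices of $G-e$ (hence are absorbed by bicontraction) is sound. The difficulty is everything after that.

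The central problem is that the uncrossing machinery you want to invoke is not available across different graphs. The uncrossing identity for tight cuts applies to two tight cuts of a \emph{single} matching covered graph; your witnesses $\partial(X_e)$ and $\partial(X_f)$ are tight in $G-e$ and $G-f$ respectively, and a cut tight in one of these graphs is typically not tight in the other. Consequently ``organise the witnesses into a laminar family'' has no well-defined meaning, and the decisive final move --- uncrossing the globally minimal witness $\partial(X^{*})$ (tight in $G-e^{*}$) against the witness for some non-thin edge $f$ with both ends inside $X^{*}$ (tight in $G-f$) so as to manufacture a smaller substantial shore --- cannot be carried out as written. To make this scheme work you would need a transfer lemma relating tight cuts of $G-e$ to tight cuts of $G-f$, and proving anything of that kind is essentially where the entire difficulty of the theorem lives. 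Combined with the fact that you explicitly defer the extraction of two (rather than one) thin edges inside $X^{*}$, what you have is a plausible outline with a load-bearing gap at exactly the point where the real argument would need to begin.
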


Note that if $e$ is a thin edge of a simple brace~$G$, the brace $\widehat{G-e}$ may not be simple.
A thin edge $e$ of a simple brace~$G$ is {\it strictly thin} if the brace $\widehat{G-e}$
is also simple.
For instance, every edge of $K_{3,3}$, and of $B_8$, is thin but none of them is strictly thin.
It is easily verified that every McCuaig brace
has several thin edges; however, none of them is strictly thin.
McCuaig showed that these are in fact the only simple braces with this property.
We let $\mathcal{G}$~denote the set that comprises $K_2$,~$C_4$ and all McCuaig braces.
McCuaig's Theorem \cite{mccu01}
may now be stated as follows.

\begin{thm}
{\sc [McCuaig's Theorem]}
\label{thm:strictly-thin-edge-in-simple-braces}
Every simple brace $G \notin \mathcal{G}$ has a strictly thin edge.
\end{thm}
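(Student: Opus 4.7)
The plan is to argue by contradiction: assume $G \notin \mathcal{G}$ is a simple brace with no strictly thin edge. Theorem~\ref{thm:two-thin-edges-in-braces} supplies at least two thin edges to work with. My first observation is that if $e = ab$ is thin and both $a$ and $b$ are noncubic in $G$, then neither endpoint has degree two in $G-e$, so no bicontractions are required and $\widehat{G-e}=G-e$ is automatically simple; thus such an $e$ would be strictly thin. Consequently, I may assume that every thin edge of $G$ has at least one cubic endpoint.

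Next I would pin down exactly how parallel edges arise. If $a$ is cubic with $N_G(a)=\{b,b_1,b_2\}$ and we bicontract $a$ in $G-e$, the two edges $ab_1, ab_2$ are contracted into a single vertex, producing a parallel edge precisely when there is some $a'\ne a$ adjacent to both $b_1$ and $b_2$; symmetrically at $b$ if $b$ is cubic. So the failure of a thin edge to be strictly thin is witnessed locally, at each cubic endpoint, by a $4$-cycle $a b_1 a' b_2$ of $G$ through that endpoint. Call this a \emph{square obstruction} at the endpoint. Combined with the previous paragraph, the contrapositive assumption becomes: every thin edge has a cubic endpoint carrying a square obstruction.

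From here the plan is a structural analysis that propagates square obstructions through a supply of thin edges and forces $G$ into one of the three McCuaig families. I would split on whether $G$ is cubic. In the cubic case, fix a cubic vertex $a$ with neighbors $b,b_1,b_2$ and let $a'$ witness the obstruction; since $G$ is cubic, $a'$ has exactly one further neighbor $b_3$. Iterating the same analysis at the edges $a'b_1, a'b_2$ and using Proposition~\ref{prp:brace-characterization} to forbid small neighborhoods and nontrivial tight cuts, one builds a $2$-layered band structure that must close up either orientably or non-orientably, giving a prism or a M{\"o}bius ladder. In the noncubic case, I would select a noncubic vertex $h$ of maximum degree and argue that the square obstructions at the cubic neighbors of $h$ chain those neighbors into a single cycle passing through a second high-degree vertex $h'$ on the opposite color class, which is the definition of a biwheel with hubs $h$ and $h'$.

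The main obstacle is this last step: converting the purely local information of square obstructions into a rigid global description. One must rule out hybrid configurations that locally look like rungs of a ladder in one region and spokes of a biwheel in another, and show that the thin edges guaranteed by Theorem~\ref{thm:two-thin-edges-in-braces} are plentiful enough and well-distributed enough to reach every part of $G$ --- invoking, in stronger form, the existence theorems for thin edges near prescribed subgraphs from \cite{clm15}. This structural rigidity step is the technical heart of McCuaig's original argument, and any alternative approach would have to absorb essentially the same combinatorial bookkeeping.
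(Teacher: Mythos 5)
The paper does not prove this theorem; it quotes McCuaig's generation theorem from \cite{mccu01} as a black box (an alternative proof via thin edges, closer in spirit to what you attempt, is attributed to \cite{clm08}). On its own terms your proposal is an outline rather than a proof, and you concede as much: your closing paragraph says the global rigidity step ``is the technical heart of McCuaig's original argument.'' That step is not ancillary bookkeeping --- it is essentially the entire theorem. McCuaig's proof of the rigidity you wave at occupies dozens of pages, and the Carvalho--Lucchesi--Murty alternative that begins (as you propose) from the existence of a thin edge is likewise a substantial argument. Leaving it as a promissory note is a genuine gap, not a deferred detail.

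There is also a concrete error in the part you do argue. Your first observation is fine: a thin edge with both ends noncubic has index zero, produces no degree-two vertices, so $\widehat{G-e}=G-e$ is simple and $e$ is strictly thin; hence one may assume every thin edge has a cubic end. But the claim that failure of strict thinness is always witnessed ``locally, at each cubic endpoint, by a $4$-cycle'' is false when $e=ab$ has both ends cubic. Write $N_G(a)=\{b,b_1,b_2\}$ and $N_G(b)=\{a,a_1,a_2\}$, and suppose $a_1b_1, a_2b_2 \in E(G)$ while no vertex is adjacent to both $b_1$ and $b_2$, nor to both $a_1$ and $a_2$. After bicontracting $a$ and then $b$, the two contraction vertices are joined by a pair of parallel edges inherited from $a_1b_1$ and $a_2b_2$, yet there is no square obstruction at either endpoint; the obstruction is the $6$-cycle $a\,b_1\,a_1\,b\,a_2\,b_2\,a$ (which avoids $e$). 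Your local-obstruction taxonomy is thus incomplete even before the global argument you omit, and that argument would have to be carried out over a richer set of local patterns than you describe.
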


\smallskip
Carvalho, Lucchesi and Murty gave an alternative proof of
McCuaig's Theorem using the existence
of a thin edge; see~\cite{clm08}.
In~\cite{clm15}, the same authors establish
a stronger version of McCuaig's Theorem.

\smallskip
For a strictly thin edge $e$ of a simple brace~$G$, the {\it index} of $e$,
denoted ${\rm index}(e)$, is the number of vertices of degree two in $G-e$.
Clearly, ${\rm index}(e) \in \{0,1,2\}$, depending on how many ends of $e$
are cubic in~$G$. The following is easily verified; see Figures~\ref{fig:index-one}~and~\ref{fig:index-two}.

\begin{prp}
\label{prp:compare-order-size-of-G-and-H}
Let $e$ denote a strictly thin edge of a simple brace~$G$, and let $H:=\widehat{G-e}$.
Then $n_H = n_G - {\rm index}(e)$ and $m_H = m_G - 1 - 2\cdot{\rm index}(e)$. \qed
\end{prp}

\subsection{Minimal Braces}
\label{sec:minimal-braces}

Recall that a brace $G$ is minimal if, for each $e \in E(G)$, the graph~$G-e$ is not a brace.
An edge~$e$ of a simple brace~$G$ is {\it superfluous} if $G-e$ is also a brace;
note that a `superfluous edge' is the same as a `strictly thin edge of index zero'.
Thus, a minimal brace is a brace devoid of superfluous edges.
Since any superfluous edge must join two noncubic vertices, the following holds.

\begin{prp}
\label{prp:brace-noncubic-stable-implies-minimal}
Let $G$ denote a brace of order six or more. If the set of all noncubic vertices is a stable set then $G$ is a
minimal brace. \qed
\end{prp}

The graph~$B_8^+$ shown in Figure~\ref{fig:B_8^+},
obtained from~$B_8$ by adding an edge, is the smallest simple
brace that is not minimal; it has a unique superfluous edge.
On the other hand, the graph~$Q_{10}^+$ shown in Figure~\ref{fig:Q_10^+with-non-superfluous-edge},
obtained from~$Q_{10}$ by adding an edge, is minimal.

\smallskip
As stated earlier, our main objective is to derive an induction tool for the class of minimal braces
from McCuaig's Theorem.

\smallskip
Now let $G$ denote a minimal brace that is not a member of $\mathcal{G}$.
By McCuaig's Theorem, $G$~has a strictly thin edge, say~$e$.
We let $H$ denote the simple brace~$\widehat{G-e}$.
However, $H$ may not be a minimal brace.
Clearly, we may choose a set $F \subset E(H)$ such that $J:=H-F$
is a minimal brace. (In this manner, we may recover a smaller minimal brace~$J$.)
Note that ${\rm index}(e) \in \{1,2\}$, each member of $F$ is a superfluous edge of $H$,
and that $F = \emptyset$ if and only if $H$ is a minimal brace.
This brings us to the following definition.

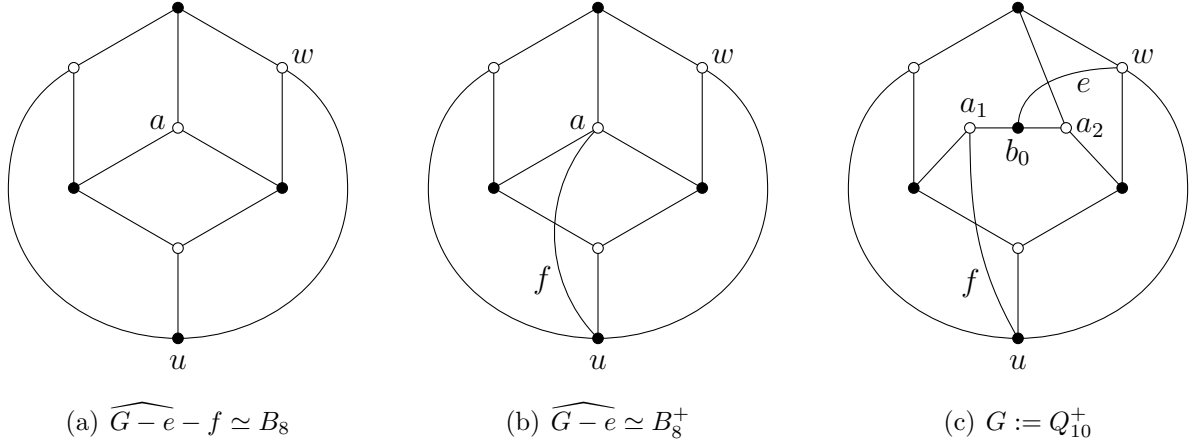
\begin{figure}[!htb]
\centering
\subfigure[$\widehat{G-e}-f \simeq B_8$]
{
\begin{tikzpicture}[scale=0.8]

\draw (90:2) -- (150:2) -- (210:2) -- (270:2) -- (330:2) -- (30:2) -- (90:2);

\draw (0:0) -- (90:2);
\draw (0:0) -- (210:2);
\draw (0:0) -- (330:2);

\draw (270:3.5) -- (270:2);
\draw (270:3.5) to [out=180,in=270] (200:3) to [out=90,in=210] (150:2);
\draw (270:3.5) to [out=0,in=270] (340:3) to [out=90,in=330] (30:2);

\draw (90:2)node[fill=black]{};
\draw (210:2)node[fill=black]{};
\draw (330:2)node[fill=black]{};
\draw (270:3.5)node[fill=black]{};

\draw (150:2)node{};
\draw (270:2)node{};
\draw (30:2)node{};
\draw (0:0)node{};

\draw (30:2.4)node[nodelabel]{$w$};
\draw (270:3.9)node[nodelabel]{$u$};
\draw (165:0.35)node[nodelabel]{$a$};

\end{tikzpicture}
\label{fig:B_8}
}
\hspace*{0.2in}
\subfigure[$\widehat{G-e} \simeq B_8^+$]
{
\begin{tikzpicture}[scale=0.8]

\draw (270:3.5) to [out=135,in=225] (0:0);

\draw (90:2) -- (150:2) -- (210:2) -- (270:2) -- (330:2) -- (30:2) -- (90:2);

\draw (0:0) -- (90:2);
\draw (0:0) -- (210:2);
\draw (0:0) -- (330:2);

\draw (270:3.5) -- (270:2);
\draw (270:3.5) to [out=180,in=270] (200:3) to [out=90,in=210] (150:2);
\draw (270:3.5) to [out=0,in=270] (340:3) to [out=90,in=330] (30:2);

\draw (90:2)node[fill=black]{};
\draw (210:2)node[fill=black]{};
\draw (330:2)node[fill=black]{};
\draw (270:3.5)node[fill=black]{};

\draw (150:2)node{};
\draw (270:2)node{};
\draw (30:2)node{};
\draw (0:0)node{};

\draw (30:2.4)node[nodelabel]{$w$};
\draw (270:3.9)node[nodelabel]{$u$};
\draw (165:0.35)node[nodelabel]{$a$};

\draw (250:2.7)node[nodelabel]{$f$};

\end{tikzpicture}
\label{fig:B_8^+}
}
\hspace*{0.2in}
\subfigure[$G:=Q_{10}^+$]
{
\begin{tikzpicture}[scale=0.8]

\draw (0:0) to [out=90,in=180] (30:2);

\draw (0:0.8) -- (90:2);
\draw (0:0.8) -- (330:2);
\draw (0:-0.8) -- (210:2);
\draw (0:-0.8) to [out=270,in=120] (270:3.5);

\draw (0:0.8) -- (0:-0.8);

\draw (90:2) -- (150:2) -- (210:2) -- (270:2) -- (330:2) -- (30:2) -- (90:2);


\draw (270:3.5) -- (270:2);
\draw (270:3.5) to [out=180,in=270] (200:3) to [out=90,in=210] (150:2);
\draw (270:3.5) to [out=0,in=270] (340:3) to [out=90,in=330] (30:2);

\draw (90:2)node[fill=black]{};
\draw (210:2)node[fill=black]{};
\draw (330:2)node[fill=black]{};
\draw (270:3.5)node[fill=black]{};
\draw (0:0)node[fill=black]{};

\draw (150:2)node{};
\draw (270:2)node{};
\draw (30:2)node{};
\draw (0:0.8)node{};
\draw (0:-0.8)node{};

\draw (270:0.4)node[nodelabel]{$b_0$};
\draw (155:0.8)node[nodelabel]{$a_1$};
\draw (0:1.2)node[nodelabel]{$a_2$};
\draw (30:2.4)node[nodelabel]{$w$};
\draw (270:3.9)node[nodelabel]{$u$};

\draw (33:1.3)node[nodelabel]{$e$};
\draw (253:2.7)node[nodelabel]{$f$};

\end{tikzpicture}
\label{fig:Q_10^+}
}
\caption{$(e,\{f\})$ is a minimality-preserving pair of the minimal brace~$Q_{10}^+$}
\label{fig:minimality-preserving-pair}
\end{figure}

\begin{Def}
\label{Def:minimality-preserving-pair}
{\sc [Minimality-Preserving Pair]}
For a minimal brace $G$,
a pair $(e,F)$ is a minimality-preserving pair if $e$ is a strictly thin edge of~$G$,
and $F$ is a subset of
$E(\widehat{G-e})$ so that the graph~$\widehat{G-e}-F$
is a minimal brace.
\end{Def}

In the above definition, since each edge of $\widehat{G-e}$ naturally corresponds
to an edge of~$G$, one may instead view the set $F$ as a subset of~$E(G)$.
Figure~\ref{fig:minimality-preserving-pair} shows an example of a
minimal brace~$G:=Q_{10}^+$ with a minimality-preserving pair $(e,\{f\})$
where ${\rm index}(e)=1$.

\smallskip
The following is a trivial consequence of McCuaig's Theorem.

\begin{cor}
\label{cor:minimality-preserving-pair-in-minimal-braces}
Every minimal brace $G \notin \mathcal{G}$ has a minimality-preserving pair $(e,F)$
for any strictly thin edge~$e$. \qed
\end{cor}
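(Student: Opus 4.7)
The plan is to combine McCuaig's Theorem with a straightforward greedy edge-removal. First I would observe that every minimal brace $G$ is simple: if $G$ contained parallel edges $e_1, e_2$ between some pair of vertices, then $e_1$ would be superfluous, since any matching of size two in $G - e_1$ extends to a perfect matching $M$ of $G$, and (if $e_1 \in M$) swapping $e_1$ for $e_2$ yields a perfect matching of $G - e_1$ containing the same pair; this would contradict minimality. With $G$ simple and $G \notin \mathcal{G}$, Theorem~\ref{thm:strictly-thin-edge-in-simple-braces} guarantees the existence of at least one strictly thin edge of~$G$, so the quantifier in the corollary is non-vacuous.

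Now fix any strictly thin edge $e$ of~$G$ and let $H := \widehat{G - e}$, which is a simple brace by the very definition of ``strictly thin''. If $H$ is already minimal, take $F := \emptyset$ and we are done. Otherwise, there is a superfluous edge $f_1$ of $H$, and $H_1 := H - f_1$ is again a brace by the definition of ``superfluous''. Iterate: while the current brace $H_i$ admits a superfluous edge $f_{i+1}$, delete it to obtain $H_{i+1} := H_i - f_{i+1}$. Since $m_{H_i}$ strictly decreases at each step and is bounded below, this procedure terminates after finitely many steps in a brace $J$ with no superfluous edges --- that is, in a minimal brace. Setting $F := \{f_1, \dots, f_k\}$ then gives $\widehat{G - e} - F = J$, and so the pair $(e, F)$ fulfils Definition~\ref{Def:minimality-preserving-pair}.

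I anticipate essentially no real obstacle: the only substantive ingredient is the existence of a strictly thin edge, which is precisely McCuaig's Theorem (Theorem~\ref{thm:strictly-thin-edge-in-simple-braces}); the remainder is bookkeeping that terminates because each step strictly reduces the edge count and preserves the brace property by construction. The small preliminary point worth highlighting is the simplicity of $G$, which is needed only to make the phrase ``strictly thin edge of $G$'' meaningful as formulated in Section~\ref{sec:McCuaig}.
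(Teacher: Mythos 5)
Your argument is correct and follows essentially the same route the paper takes implicitly: McCuaig's Theorem supplies a strictly thin edge $e$ of the (simple) minimal brace $G$, and one then greedily deletes superfluous edges from $\widehat{G-e}$ until a minimal brace $J$ remains --- exactly what the paper compresses into the sentence ``Clearly, we may choose a set $F \subset E(H)$ such that $J:=H-F$ is a minimal brace.'' Your preliminary check that a minimal brace must be simple (otherwise one of two parallel edges would be superfluous) is a detail the paper leaves tacit, but it is a worthwhile remark since the notions of strictly thin edge and McCuaig's Theorem are stated for simple braces only.
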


The next statement follows immediately from Proposition~\ref{prp:compare-order-size-of-G-and-H}.

\begin{prp}
\label{prp:compare-order-size-of-G-and-J}
Let $(e,F)$ denote a minimality-preserving pair of a minimal brace~$G$, and
let $J:=\widehat{G-e}-F$.
Then $n_J = n_G - {\rm index}(e)$ and $m_J = m_G - 1 - 2 \cdot {\rm index}(e) - |F|$. \qed
\end{prp}

Corollary~\ref{cor:minimality-preserving-pair-in-minimal-braces}
may be viewed as an induction tool for minimal braces;
however, it is not particularly useful for the following reason.
If $(e,F)$
is a minimality-preserving pair of a minimal brace $G$,
then the minimal brace~$J:=\widehat{G-e}-F$
can be arbitrarily smaller in size than $G$ depending on the cardinality of the
set~$F$.
(This is in contrast to McCuaig's Theorem; see Proposition~\ref{prp:compare-order-size-of-G-and-H}).
On the other hand, it seems intuitive that for a minimal brace $G \notin \mathcal{G}$
one should be able to find a minimality-preserving pair $(e,F)$
such that the set~$F$ is ``small''. This is in fact a consequence of our
Main Theorem~(\ref{thm:narrow-minimality-preserving-pair-in-minimal-braces}).

\begin{cor}
\label{cor:narrow-minimality-preserving-pair-in-minimal-braces-abridged-version}
Every minimal brace $G \notin \mathcal{G}$
has a minimality-preserving pair $(e,F)$ such that $|F| \leq {\rm index}(e) + 1$.
\end{cor}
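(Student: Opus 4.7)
The plan is to apply McCuaig's Theorem~\ref{thm:strictly-thin-edge-in-simple-braces} to produce a strictly thin edge~$e$ of~$G$, set $H := \widehat{G - e}$, and obtain $F$ by greedily deleting superfluous edges of~$H$ until a minimal brace $\widehat{G-e}-F$ results. The task then reduces to bounding $|F|$ by ${\rm index}(e) + 1$, which I would do by showing that superfluous edges of~$H$ cannot be too abundant, essentially because they lift to ``superfluous-like'' configurations in~$G$ via Corollary~\ref{cor:adding-an-edge}.

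The core principle is cleanest in the index-$0$ case, where bicontraction is trivial and $H = G - e$. If $f_1, f_2$ are distinct edges of~$H$ and $H - f_1 - f_2$ is a brace, then adding the edge~$e$ and invoking Corollary~\ref{cor:adding-an-edge} shows that $G - f_1 - f_2$ is a brace; adding $f_2$ back (again by Corollary~\ref{cor:adding-an-edge}) exhibits $G - f_1$ as a brace, contradicting the minimality of~$G$. Consequently, once any single superfluous edge is removed from~$H$ in the index-$0$ case, no further superfluous edge can remain in the resulting brace, so $|F| \leq 1 = {\rm index}(e) + 1$.

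For indices~$1$ and~$2$, the same guiding principle applies, but with the critical complication that $V(H) \neq V(G)$: bicontraction produces one or two new vertices ($b_{12}$, and also $a_{12}$ for index~$2$), and the step of ``adding $e$ back'' now corresponds not to a single edge addition but to splitting a bicontracted vertex and re-inserting a cubic endpoint of~$e$. I would handle this via a case analysis, distinguishing whether each candidate edge of~$F$ is or is not incident to a bicontracted vertex of~$H$: edges far from the bicontraction can be treated by the index-$0$ argument verbatim, while edges near the bicontraction each cost us one unit in the bound on~$|F|$, yielding the targeted $|F| \leq 2$ and $|F| \leq 3$ respectively.

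I expect this final step---the case analysis near the bicontraction region for indices~$1$ and~$2$---to be the main obstacle. The difficulty is that removing a superfluous edge $f_i$ of~$H$ incident to~$b_{12}$ corresponds in~$G$ to deleting an edge incident to $b_1$ or~$b_2$, and such a deletion in $G - e$ can reduce $b_1$ or $b_2$ to degree two, triggering further bicontractions so that in general $H - f_i \neq \widehat{G-e-f_i}$. Controlling this interaction between edge deletion and bicontraction is precisely where the detailed combinatorics of the Main Theorem~(\ref{thm:narrow-minimality-preserving-pair-in-minimal-braces})---of which this corollary is a stated consequence---must be invoked to deliver the clean bound $|F| \leq {\rm index}(e) + 1$.
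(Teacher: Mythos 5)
Your overall strategy is the right one---pick a strictly thin edge $e$, set $H:=\widehat{G-e}$, greedily delete superfluous edges of~$H$ to reach a minimal brace, and then bound $|F|$---and this is exactly the framework the paper uses to derive this corollary from its Main Theorem. However, the proposal does not actually carry out the step that matters, and as a result there is a genuine gap.

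First, the ``index-$0$'' case you develop as the model argument is vacuous here: a strictly thin edge of index zero is, by definition, a superfluous edge, and a minimal brace has none. (Indeed, even a simpler version of your argument shows $|F|=0$ there, not just $|F|\le 1$: if $H-f$ were a brace, then $G-f=(H-f)+e$ would already be a brace by Corollary~\ref{cor:adding-an-edge}.) So the ``clean core principle'' never arises when $G$ is minimal; the entire content of the corollary lives in the index-one and index-two cases, precisely the ones you defer.

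Second, for those cases your argument reduces to ``I would handle this via a case analysis \dots the detailed combinatorics of the Main Theorem must be invoked.'' But the corollary \emph{is} a stated consequence of the Main Theorem, so invoking the Main Theorem's combinatorics to prove it is circular unless you actually supply those combinatorics. The missing content is exactly what the paper's Lemmas~\ref{lem-1:index-one}--\ref{lem-2:index-one} (index one) and \ref{lem-1:index-two}--\ref{lem-2:index-two} (index two) establish: that any superfluous edge $f$ of $H$ must be incident with a \emph{cubic outer vertex} of $G-e$, and that two such edges whose joint removal preserves the brace property must share such a vertex (or, in the index-two case, be confined to a pair of adjacent cubic outer vertices). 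Crucially, these lemmas are not proved by ``adding $e$ back'' as in your index-zero sketch; they are proved by showing that, were $f$ not so constrained, $G-f$ (or $\widehat{G-f}$) could itself be obtained from the brace $H-f$ by an expansion operation, and then invoking McCuaig's Proposition~\ref{prp:expansion-operations-preserve-brace-property} that expansion operations preserve the brace property. Your proposal identifies the interaction between edge deletion and bicontraction as ``the main obstacle'' but does not supply this key ingredient, nor the degree count on cubic outer vertices that turns the localisation of~$F$ into the numerical bound $|F|\le\mathrm{index}(e)+1$. Without it, the bound is asserted rather than proved.
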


Apart from this quantitative information regarding the minimality-preserving pair~$(e,F)$,
our Main Theorem~(\ref{thm:narrow-minimality-preserving-pair-in-minimal-braces})
also provides qualitative information: for instance, each member of~$F$
is at distance~one from the strictly thin edge~$e$.

\bigskip
\noindent
{\bf Organization of this paper:}
The Main Theorem (\ref{thm:narrow-minimality-preserving-pair-in-minimal-braces})
and its proof appear in Section~\ref{sec:induction-tool-minimal-braces}.
In Section~\ref{sec:application}, we use the Main Theorem as an induction tool
to prove Theorem~\ref{thm:extremal-minimal-braces} ---
which states that $m_G \leq 5n_G-10$ for any minimal brace~$G$, where $n_G \geq 6$,
and also provides a complete characterization of minimal braces that meet this upper bound.
In Section~\ref{sec:small-order}, we characterize minimal braces of small order;
this will serve as the base case in our proof of Theorem~\ref{thm:extremal-minimal-braces}.

\section{Minimal braces of small order}
\label{sec:small-order}

Let $e$ denote any (removable) edge of a simple brace~$G$ of order six or more.
Note that $e$ is not superfluous if and only if
the bipartite \mcg~$G-e$ has a nontrivial tight cut.
One may now easily deduce the following from
Proposition~\ref{prp:tight-cuts-in-bip-mcgs}.

\begin{cor}
\label{cor:non-superfluous-edges-in-braces}
Let $G[A,B]$ denote a simple brace of order six or more.
An edge~$e$ of $G$ is not superfluous if and only if there exist partitions
$(A_1,A_2)$ of $A$ and $(B_1,B_2)$ of $B$ such that $|B_1| = |A_1| + 1$
and $e$ is the only edge joining a vertex in~$A_1$ to a vertex in~$B_2$. \qed
\end{cor}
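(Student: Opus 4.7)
The plan is to exploit the observation stated just before the corollary: since every edge of a brace of order six or more is removable (by the corollary preceding Proposition~\ref{prp:compare-order-size-of-G-and-H}), $G-e$ is always a bipartite \mcg, and $e$ is non-superfluous if and only if $G-e$ is not a brace, that is, if and only if $G-e$ admits a nontrivial tight cut. Thus the task reduces to translating the characterization of tight cuts given by Proposition~\ref{prp:tight-cuts-in-bip-mcgs} into the partition language of the statement.

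For the forward direction, assume $e$ is non-superfluous and let $\partial(X)$ be a nontrivial tight cut of $G-e$. By Proposition~\ref{prp:tight-cuts-in-bip-mcgs}, $|X|$ is odd with $|X_+|=|X_-|+1$, and $G-e$ has no edges between $X_-$ and $\overline{X}_-$. Interchanging $X$ with $\overline{X}$ if necessary, I may assume $X_+\subseteq B$, so that $X_-\subseteq A$; then setting $A_1:=X_-$, $B_1:=X_+$, $A_2:=\overline{X}\cap A$, $B_2:=\overline{X}\cap B$ gives partitions with $|B_1|=|A_1|+1$. Since $G$ is a brace, $\partial(X)$ cannot be tight in $G$; as condition~(i) of Proposition~\ref{prp:tight-cuts-in-bip-mcgs} clearly still holds in $G$, condition~(ii) must fail, meaning $G$ does contain at least one edge between $A_1=X_-$ and $B_2=\overline{X}_-$. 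Because $G-e$ contains no such edge, $e$ is the unique edge of $G$ joining $A_1$ to $B_2$.

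For the backward direction, given partitions satisfying the stated hypotheses, set $X:=A_1\cup B_1$. Then $|X|=2|A_1|+1$ is odd with majority part $X_+=B_1$ and minority part $X_-=A_1$, so condition~(i) of Proposition~\ref{prp:tight-cuts-in-bip-mcgs} is met. The hypothesis that $e$ is the only $A_1$-$B_2$ edge of $G$ translates exactly into the statement that $G-e$ has no edges between $X_-=A_1$ and $\overline{X}_-=B_2$, giving condition~(ii) in $G-e$. Hence $\partial(X)$ is a tight cut of $G-e$. Nontriviality is immediate from the partitions being genuine (all four parts nonempty): the relation $|B_1|=|A_1|+1$ forces $|X|\geq 3$, and symmetrically $|A_2|=|B_2|+1$ forces $|\overline{X}|\geq 3$. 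Thus $G-e$ is not a brace and $e$ is not superfluous.

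There is no real obstacle here beyond the bookkeeping that selects the correct shore of the cut to play the role of $A_1\cup B_1$; the proof is essentially a direct dictionary between Proposition~\ref{prp:tight-cuts-in-bip-mcgs} applied to $G-e$ (to produce the partition) and applied to $G$ (to locate $e$ as the unique $A_1$-$B_2$ edge).
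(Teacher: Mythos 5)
Your proof is correct and follows exactly the route the paper intends: the remark immediately preceding the corollary reduces the claim to the existence of a nontrivial tight cut in $G-e$, and you translate Proposition~\ref{prp:tight-cuts-in-bip-mcgs} into the partition language in both directions, with the key observation that tightness of $\partial(X)$ in $G-e$ but not in $G$ forces $e$ to be the unique $A_1$--$B_2$ edge. The paper leaves this deduction to the reader, and your write-up supplies precisely that bookkeeping, including the nontriviality check.
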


The brace~$Q_{10}^+$, shown in Figure~\ref{fig:Q_10^+with-non-superfluous-edge},
has precisely one edge joining two noncubic vertices, and it is not superfluous;
consequently, $Q_{10}^+$ is a minimal brace.

\smallskip
For a simple bipartite graph~$G[A,B]$, its {\it bipartite
complement}~$\overline{G}$ is the graph that has the same set of vertices
and has edge set $\{ab: ab \notin E(G), a \in A, b \in B\}$.
Using this notion, one may easily prove that $K_{3,3}$~and~$B_8$ are the only
simple cubic bipartite graphs of order at most eight.
(This proof technique is illustrated in the proof of~Proposition~\ref{prp:cubic-bipartite-graphs-order-ten}.)
Consequently, $K_{3,3}$ is the only simple brace of order six.
Using Corollary~\ref{cor:non-superfluous-edges-in-braces},
one may infer that $B_8$ is the only minimal brace of order eight.

\begin{prp}
\label{prp:minimal-braces-order-at-most-eight}
The only minimal braces of order at most eight are $K_2, C_4, K_{3,3}$ and $B_8$.~\qed
\end{prp}

\begin{prp}
\label{prp:cubic-bipartite-graphs-order-ten}
The only simple cubic bipartite graphs of order ten are $M_{10}$ and $Q_{10}$.
\end{prp}
\begin{proof}
Let $G[A,B]$ denote a simple cubic bipartite graph of order~ten,
and let $\overline{G}$ denote its bipartite complement.
Clearly, $\overline{G}$ is $2$-regular.
Observe the following.
If $\overline{G}$ is disconnected then $G$ is isomorphic to $Q_{10}$.
Otherwise $\overline{G}$ is connected and $G$ is isomorphic to~$M_{10}$.
\end{proof}

\begin{prp}
\label{prp:minimal-braces-order-ten}
The only minimal braces of order ten are $M_{10}$, $B_{10}$ and $Q_{10}^+$.
\end{prp}
\begin{proof}
Let $G[A,B]$ denote a minimal brace of order ten. If $G$ is cubic then,
by Proposition~\ref{prp:cubic-bipartite-graphs-order-ten},
$G$ is isomorphic to $M_{10}$. Now assume that $G$ is noncubic,
and let $T$ denote the set of noncubic vertices.

\smallskip
First suppose that $T$ is a stable set.
Observe that $T$ has precisely two members, say $a$~and~$b$, each of which
has degree precisely four.  Consequently, $G-a-b$ is a connected
$2$-regular bipartite graph. (Recall that braces of order six or more are $3$-connected.)
Thus $G-a-b$ is isomorphic to $C_8$ and $G$ is isomorphic to $B_{10}$.

\smallskip
Now suppose that $T$ is not a stable set, and let $e:=ab$ denote an edge that joins $a \in A \cap T$
and $b \in B \cap T$. Since $G$ is devoid of superfluous edges, the edge~$e$ in particular
is not superfluous. By Corollary~\ref{cor:non-superfluous-edges-in-braces},
there exist partitions $(A_1,A_2)$ of~$A$ and $(B_1,B_2)$ of~$B$ such that
$|B_1| = |A_1| + 1$ and $e$ is the only edge joining a vertex in~$A_1$
to a vertex in~$B_2$. Since $a$ and $b$ are noncubic vertices, we infer that
each of the sets $B_1$ and $A_2$ has at least three vertices. Consequently, $|B_1|=|A_2|=3$,
and each of the induced subgraphs
$G[A_1 \cup B_1]$ and $G[A_2 \cup B_2]$ is isomorphic to $K_{2,3}$.
Since $|A_1|=|B_2|=2$, by Proposition~\ref{prp:brace-characterization},
the induced subgraph~$G[A_2 \cup B_1]$ has a perfect matching.
All of these facts imply that the brace~$Q_{10}^+$ is a subgraph of~$G$,
whence $G$ is isomorphic to~$Q_{10}^+$.
\end{proof}

\begin{Def}
\label{Def:stable-extension}
{\sc [Stable-Extension]}
Let $S$ denote a stable set of a connected bipartite graph~$J[A,B]$
that meets each color class in precisely two vertices.
We let $S:=\{a_1,a_2,b_1,b_2\}$
where $a_1,a_2 \in A$ and $b_1,b_2 \in B$.
The graph~$G$ obtained from $J$ ---
by adding two new vertices $a_0$~and~$b_0$, and five new edges $a_0b_1, a_0b_2,
b_0a_1, b_0a_2$ and $a_0b_0$ --- is called the stable-extension of~$J$ with respect to~$S$ ---
or simply the $S$-extension of~$J$.
We refer to $a_0$~and~$b_0$ as the {\it extension vertices} of~$G$.
\end{Def}

\smallskip
The graph~$Q_{10}$, shown in Figure~\ref{fig:Q_10_stable},
has a unique stable set $S$ that meets each color
class in precisely two vertices. We let $Q_{12}$, shown in Figure~\ref{fig:Q_12},
denote the $S$-extension of~$Q_{10}$.
Using Propositions~\ref{prp:brace-characterization}~and~\ref{prp:brace-noncubic-stable-implies-minimal},
one may verify that $Q_{12}$ is a minimal brace.

\begin{figure}[!htb]
\centering
\subfigure[$Q_{10}$]{
\begin{tikzpicture}

\draw (-0.75,2) -- (-1.5,1) -- (0.75,2) -- (0,1) -- (-0.75,2) -- (1.5,1) -- (0.75,2);

\draw (-0.75,-1) -- (-1.5,0) -- (0.75,-1) -- (0,0) -- (-0.75,-1) -- (1.5,0) -- (0.75,-1);

\draw (-1.5,1) -- (-1.5,0);
\draw (0,1) -- (0,0);
\draw (1.5,1) -- (1.5,0);


\draw (-0.75,2)node{}node[above,nodelabel]{$a_1$};
\draw (0.75,2)node{}node[above,nodelabel]{$a_2$};
\draw (-1.5,0)node{};
\draw (0,0)node{};
\draw (1.5,0)node{};

\draw (-0.75,-1)node[fill=black]{}node[below,nodelabel]{$b_1$};
\draw (0.75,-1)node[fill=black]{}node[below,nodelabel]{$b_2$};
\draw (-1.5,1)node[fill=black]{};
\draw (0,1)node[fill=black]{};
\draw (1.5,1)node[fill=black]{};

\end{tikzpicture}
\label{fig:Q_10_stable}
}
\hspace*{0.5in}
\subfigure[$Q_{12}$]{
\begin{tikzpicture}

\draw (0,2) -- (-1.5,1);
\draw (0,2) -- (0,1);
\draw (0,2) -- (1.5,1);
\draw (0,2) -- (3,1);
\draw (1.5,2) -- (-1.5,1);
\draw (1.5,2) -- (0,1);
\draw (1.5,2) -- (1.5,1);
\draw (1.5,2) -- (3,1);

\draw (0,-1) -- (-1.5,0);
\draw (0,-1) -- (0,0);
\draw (0,-1) -- (1.5,0);
\draw (0,-1) -- (3,0);
\draw (1.5,-1) -- (-1.5,0);
\draw (1.5,-1) -- (0,0);
\draw (1.5,-1) -- (1.5,0);
\draw (1.5,-1) -- (3,0);

\draw (-1.5,1) -- (-1.5,0);
\draw (0,1) -- (0,0);
\draw (1.5,1) -- (1.5,0);
\draw (3,1) -- (3,0);

\draw (0,2)node{}node[above,nodelabel]{$a_1$};
\draw (1.5,2)node{}node[above,nodelabel]{$a_2$};
\draw (-1.5,0)node{};
\draw (0,0)node{};
\draw (1.5,0)node{};
\draw (3,0)node{};

\draw (0,-1)node[fill=black]{}node[below,nodelabel]{$b_1$};
\draw (1.5,-1)node[fill=black]{}node[below,nodelabel]{$b_2$};
\draw (-1.5,1)node[fill=black]{};
\draw (0,1)node[fill=black]{};
\draw (1.5,1)node[fill=black]{};
\draw (3,1)node[fill=black]{};

\draw (3.4,0)node[nodelabel]{$a_0$};
\draw (3.4,1)node[nodelabel]{$b_0$};

\end{tikzpicture}
\label{fig:Q_12}
}
\caption{The minimal brace $Q_{12}$ is the $S$-extension of $Q_{10}$ --- where $S:=\{a_1,a_2,b_1,b_2\}$}
\label{}
\end{figure}
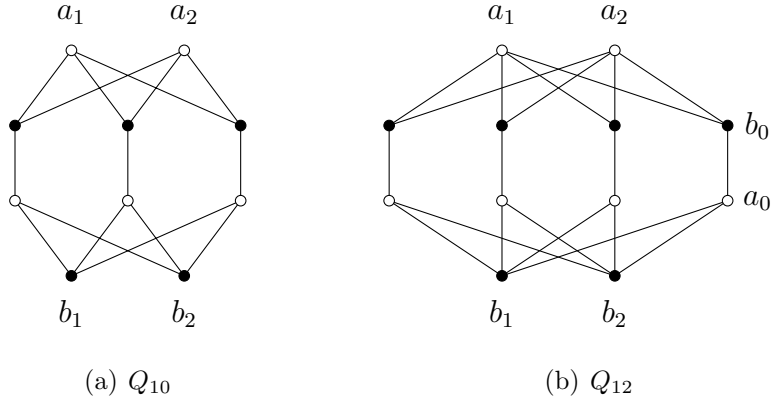

\begin{prp}
\label{prp:minimal-braces-order-twelve-size-twenty}
The only minimal braces of order~$12$, and size at least~$20$, are $B_{12}$~and~$Q_{12}$.
\end{prp}
\begin{proof}
Let $G[A,B]$ denote a minimal brace of order~$12$ and size at least~$20$,
and let $T$ denote the set of noncubic vertices.
Clearly, each of the sets $T \cap A$ and $T \cap B$ is nonempty.
We let $E_T$ denote the set of edges that have both ends in~$T$.

\smallskip
First suppose that $T$ is a stable set (i.e., $E_T = \emptyset$), whence $G$ has maximum degree at most five.
Observe that if there exists a vertex of degree five then $|T \cap A| = |T \cap B| = 1$;
whence $G-T$ is a connected $2$-regular bipartite graph; consequently,
$G-T$ is isomorphic to~$C_{10}$ and $G$ is isomorphic to~$B_{12}$.
Otherwise, $|T \cap A|=|T \cap B|=2$, and each member of~$T$ has degree precisely four.
In this case, observe that $G-T$
is a $1$-regular (bipartite) graph; thus, $G-T$ has four components (each
isomorphic to $K_2$), and $G$ is isomorphic to $Q_{12}$.

\smallskip
Now suppose that $T$ is not a stable set.
Our goal is to arrive at a contradiction; however, it requires some tedious arguments.

\smallskip
Let $e:=ab$ denote \underline{any} member of~$E_T$.
Since $e$ is not superfluous,
by Corollary~\ref{cor:non-superfluous-edges-in-braces},
there exist partitions $(A_1,A_2)$ of~$A$ and $(B_1,B_2)$ of~$B$ such that
$|B_1| = |A_1| + 1$ and $e$ is the only edge joining a vertex in~$A_1$
to a vertex in~$B_2$.
Since each end of~$e$ is noncubic, and since $n_G = 6$,
one of the two sets $B_1$~and~$A_2$ has cardinality three and the other one
has cardinality four. Adjust notation so that $|B_1|=3$ and $|A_2|=4$.
See Figure~\ref{fig-1:minimal-braces-order-twelve-size-twenty}.
Let $a_1$ denote the unique member of~$A_1-a$.
Observe that $N_G(a) = B_1 \cup \{b\}$ and $N_G(a_1) = B_1$.

\smallskip
In particular, we have proved the following.
\begin{sta}
\label{sta-1:minimal-braces-order-twelve-size-twenty}
Each edge~$f \in E_T$ has an end whose degree is precisely four, say~$v$,
such that there exists a cubic vertex~$v'$ that satisfies $N_G(v') = N_{G-f}(v)$. \qed
\end{sta}

The degree of vertex~$b$ is either four or five; we will prove that it must be four.
Suppose to the contrary that $b$ has degree five; whence $N_G(b) := A_2 \cup \{a\}$.
By a simple counting argument, there exists a noncubic vertex, say~$a_2$, in~$A_2$.
Since $f_2:=a_2b$ is a member of~$E_T$,
using statement~\ref{sta-1:minimal-braces-order-twelve-size-twenty}, we infer
that $a_2$ has degree precisely four, and there exists a cubic vertex~$a'$ that satisfies
$N_G(a') = N_{G-f_2}(a_2) = N_G(a_2) - b$.
Since each member of~$A_2$ is adjacent with~$b$, it follows that $a' = a_1$.
Consequently, $N_G(a_2) = B_1 \cup \{b\}$.
This implies that $N_G(B_2 - b) \subseteq A_2 -a_2$.
This contradicts Proposition~\ref{prp:brace-characterization}.
Thus $b$ has degree four.

\smallskip
In particular, we have now proved the following.
\begin{sta}
\label{sta-2:minimal-braces-order-twelve-size-twenty}
For every edge~$f \in E_T$, each end of $f$ has degree precisely four.
(Consequently, $|T \cap A|=|T \cap B| \geq 2$, and each vertex in~$T$ has degree precisely four.) \qed
\end{sta}

Now, we will prove that each neighbor of~$a$, distinct from~$b$, is cubic.
Suppose to the contrary that there exists $b_1 \in B_1$ that is noncubic.
Thus $b_1$ has degree precisely four and $f_1:=ab_1$ is a member of~$E_T$.
Now we invoke statement~\ref{sta-1:minimal-braces-order-twelve-size-twenty}.
Either there exists a cubic vertex~$a'$ that satisfies $N_G(a')=N_{G-f_1}(a) = (B_1-b_1) \cup \{b\}$,
or otherwise there exists a cubic vertex~$b'$ that satisfies $N_G(b') = N_{G-f_1}(b_1) = N_G(b_1)-a$.
In the latter case, note that $b' \in B_2$ (since each vertex in~$B_1$ is adjacent with~$a$);
however, this implies that we have an edge joining $b' \in B_2$ and $a_1 \in A_1$; contradiction.
In the former case, note that $a' \in A_2$; whence $N_G(B_2-b) \subseteq A_2 - a'$,
and this contradicts Proposition~\ref{prp:brace-characterization}.
Thus each neighbor of~$a$, distinct from~$b$, is cubic;
see Figure~\ref{fig-2:minimal-braces-order-twelve-size-twenty}.

\begin{figure}[!htb]
\centering
\subfigure[$e$ is the only edge joining $A_1$~and~$B_2$]
{
\begin{tikzpicture}[scale=0.9]

\draw (0,2) -- (0,0);
\draw (0,2) -- (1,0);
\draw (0,2) -- (2,0);

\draw (1,2) -- (0,0);
\draw (1,2) -- (1,0);
\draw (1,2) -- (2,0);

\draw (1,2) -- (4,0);
\draw (0,2.6)node[nodelabel]{$a_1$};
\draw (1,2.6)node[nodelabel]{$a$};
\draw (4,-0.6)node[nodelabel]{$b$};
\draw (1.9,1.7)node[nodelabel]{$e$};

\draw (-0.6,2)node[nodelabel]{$A_1$};
\draw (-0.6,0)node[nodelabel]{$B_1$};
\draw (6.6,0)node[nodelabel]{$B_2$};
\draw (6.6,2)node[nodelabel]{$A_2$};

\draw (-0.3,-0.3) -- (-0.3,0.3) -- (2.3,0.3) -- (2.3,-0.3) -- (-0.3,-0.3);

\draw (-0.3,1.7) -- (-0.3,2.3) -- (1.3,2.3) -- (1.3,1.7) -- (-0.3,1.7);

\draw (3.7,-0.3) -- (3.7,0.3) -- (6.3,0.3) -- (6.3,-0.3) -- (3.7,-0.3);

\draw (2.7,1.7) -- (2.7,2.3) -- (6.3,2.3) -- (6.3,1.7) -- (2.7,1.7);

\draw (4,0)node[fill=black]{};
\draw (5,0)node[fill=black]{};
\draw (6,0)node[fill=black]{};
\draw (3,2)node{};
\draw (4,2)node{};
\draw (5,2)node{};
\draw (6,2)node{};

\draw (0,0)node[fill=black]{};
\draw (1,0)node[fill=black]{};
\draw (2,0)node[fill=black]{};
\draw (0,2)node{};
\draw (1,2)node{};

\end{tikzpicture}
\label{fig-1:minimal-braces-order-twelve-size-twenty}
}
\subfigure[The number next to a vertex indicates its degree]
{
\begin{tikzpicture}[scale=0.9]

\draw (0,2) -- (0,0);
\draw (0,2) -- (1,0);
\draw (0,2) -- (2,0);

\draw (1,2) -- (0,0);
\draw (1,2) -- (1,0);
\draw (1,2) -- (2,0);

\draw (1,2) -- (4,0);
\draw (0,2.6)node[nodelabel]{$a_1$};
\draw (1,2.6)node[nodelabel]{$a$};
\draw (4,-0.6)node[nodelabel]{$b$};
\draw (1.9,1.7)node[nodelabel]{$e$};

\draw (-0.9,2)node[nodelabel]{$A_1$};
\draw (-0.9,0)node[nodelabel]{$B_1$};
\draw (6.6,0)node[nodelabel]{$B_2$};
\draw (6.6,2)node[nodelabel]{$A_2$};

\draw (-0.6,-0.3) -- (-0.6,0.3) -- (2.3,0.3) -- (2.3,-0.3) -- (-0.6,-0.3);

\draw (-0.6,1.7) -- (-0.6,2.3) -- (1.3,2.3) -- (1.3,1.7) -- (-0.6,1.7);

\draw (3.7,-0.3) -- (3.7,0.3) -- (6.3,0.3) -- (6.3,-0.3) -- (3.7,-0.3);

\draw (2.7,1.7) -- (2.7,2.3) -- (6.3,2.3) -- (6.3,1.7) -- (2.7,1.7);

\draw (4,0)node[fill=black]{};
\draw (4.3,0)node[nodelabel]{$4$};
\draw (5,0)node[fill=black]{};
\draw (6,0)node[fill=black]{};
\draw (3,2)node{};
\draw (4,2)node{};
\draw (5,2)node{};
\draw (6,2)node{};

\draw (0,0)node[fill=black]{};
\draw (-0.3,0)node[nodelabel]{$3$};
\draw (1,0)node[fill=black]{};
\draw (0.7,0)node[nodelabel]{$3$};
\draw (2,0)node[fill=black]{};
\draw (1.7,0)node[nodelabel]{$3$};
\draw (0,2)node{};
\draw (-0.3,2)node[nodelabel]{$3$};
\draw (1,2)node{};
\draw (0.7,2)node[nodelabel]{$4$};

\end{tikzpicture}
\label{fig-2:minimal-braces-order-twelve-size-twenty}
}
\caption{Illustration for the proof of
Proposition~\ref{prp:minimal-braces-order-twelve-size-twenty}}
\label{fig:minimal-braces-order-twelve-size-twenty}
\end{figure}
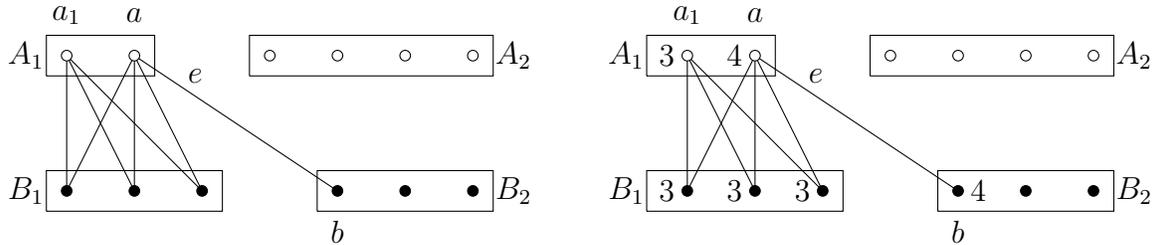

\smallskip
In particular, we have established the following.
\begin{sta}
\label{sta-3:minimal-braces-order-twelve-size-twenty}
For every edge $f \in E_T$, each end of~$f$ has degree precisely four (in~$G$) and at least one of them
has degree precisely one in the induced subgraph~$G[T]$. \qed
\end{sta}

We let $w \in A$ and $u \in B$ denote two vertices of $T$ that are distinct from $a$~and~$b$.
Note that $w \in A_2$ and $u \in B_2$.
By Proposition~\ref{prp:brace-characterization}, the graph~$G-a-a_1-b-u$ has a perfect matching;
whence the three edges joining $B_1$ and $A_2$ constitute a matching.
This implies that $w$ has precisely one neighbor in~$B_1$.
Consequently, $wb, wu \in E(G)$.
Observe that the induced subgraph $G[T]$ contains a path of length three: $(a,b,w,u)$.
The edge~$bw \in E_T$ contradicts statement~\ref{sta-3:minimal-braces-order-twelve-size-twenty}.

\smallskip
This completes the proof of Proposition~\ref{prp:minimal-braces-order-twelve-size-twenty}.
\end{proof}

\section{An induction tool for minimal braces}
\label{sec:induction-tool-minimal-braces}

In subsection~\ref{sec:McCuaig}, we
presented a `reduction version' of
McCuaig's Theorem
(\ref{thm:strictly-thin-edge-in-simple-braces})
using the notion of a strictly thin edge;
this viewpoint and the associated
terminology is due to Carvalho, Lucchesi and Murty~\cite{clm08,clm15}
and it is convenient for stating results concisely.
In the following subsection, we shall present a `generation version' of
McCuaig's Theorem (\ref{thm:generating-simple-braces}).
In order to do so, we first need to define some `expansion operations';
these will also be useful in deducing our
Main Theorem~(\ref{thm:narrow-minimality-preserving-pair-in-minimal-braces}).

\subsection{Expansion operations}
\label{sec:expansion-operations}

For a simple bipartite connected graph~$H[A,B]$, and nonadjacent vertices $a \in A$ and $b \in B$,
$H+ab$ denotes the graph obtained from $H$ by adding the edge $ab$.
Note that if $H$ is a brace then, by Corollary~\ref{cor:adding-an-edge},
$H+ab$ is a (simple) brace and $ab$ is a strictly thin edge of index zero;
in this case, we say that $H+ab$ is obtained from $H$ by an {\it expansion of index zero}.
We shall now define two more expansion operations (on simple braces) ---
each of which may be viewed
as the reverse of removing a strictly thin edge (of index one or two)
and then taking the retract.
To do so, we first need the notion of `bi-splitting' a noncubic vertex.

\smallskip
Let $b$ denote a noncubic vertex of a simple bipartite \mcg~$H[A',B']$.
Adjust notation so that $b \in B'$.
Suppose that a (bipartite) graph $G$ is obtained from $H$ by replacing the vertex~$b$ by two new
vertices $b_1$~and~$b_2$, distributing the edges in $H$ incident with~$b$ between $b_1$~and~$b_2$
such that each gets at least two edges, and then adding a new vertex~$a_0$ and two new
edges: $a_0b_1, a_0b_2$.
We say that $G$ is obtained from $H$ by {\it bi-splitting $b$ into $b_1a_0b_2$},
and we denote this as $G:=H\{b \rightarrow b_1a_0b_2\}$.
It is easily verified that $G$ is also \mc.
Observe that $H$ can be recovered from $G$ by bicontracting the vertex~$a_0$
and denoting the contraction vertex by~$b$.

\smallskip
We are now ready to define the aforementioned expansion operations.
(See Figures \ref{fig:index-one} and \ref{fig:index-two}.)

\begin{figure}[!htb]
\centering
\subfigure[brace~$H:=\widehat{G-e}$]
{
\begin{tikzpicture}

\draw (1.5,2.75) -- (0.5,1.5);
\draw (1.5,2.75) -- (1,1.5);
\draw (1.5,2.75) -- (2,1.5);
\draw (1.5,2.75) -- (2.5,1.5);

\draw (1.5,2.75)node{}node[nodelabel,above]{$a$};
\draw (1.5,0.75)node{};
\draw (1.5,0.45)node[nodelabel]{$w$};

\draw (0.5,1.5)node[fill=black]{};
\draw (1,1.5)node[fill=black]{};
\draw (2,1.5)node[fill=black]{};
\draw (2.5,1.5)node[fill=black]{};

\draw (0,0) -- (3,0) -- (3,2) -- (0,2) -- (0,0);
\end{tikzpicture}
}
\hspace*{1in}
\subfigure[brace~$G$]
{
\begin{tikzpicture}

\draw (1.5,3.5) to [out=0,in=90] (3.5,2) to [out=270,in=0] (1.5,0.75);
\draw (3.7,2)node[nodelabel]{$e$};

\draw (1.5,3.5) -- (1,2.75);
\draw (1.5,3.5) -- (2,2.75);

\draw (1,2.75) -- (0.5,1.5);
\draw (1,2.75) -- (1,1.5);
\draw (2,2.75) -- (2,1.5);
\draw (2,2.75) -- (2.5,1.5);

\draw (1,2.75)node{};
\draw (0.7,2.75)node[nodelabel]{$a_1$};
\draw (2,2.75)node{};
\draw (2.35,2.75)node[nodelabel]{$a_2$};

\draw (1.5,3.5)node[fill=black]{};
\draw (1.5,3.85)node[nodelabel]{$b_0$};
\draw (1.5,0.75)node{};
\draw (1.5,0.45)node[nodelabel]{$w$};

\draw (0.5,1.5)node[fill=black]{};
\draw (1,1.5)node[fill=black]{};
\draw (2,1.5)node[fill=black]{};
\draw (2.5,1.5)node[fill=black]{};

\draw (0,0) -- (3,0) -- (3,2) -- (0,2) -- (0,0);
\end{tikzpicture}
}
\caption{$G$ is obtained from~$H$ by an expansion of index one}
\label{fig:index-one}
\end{figure}
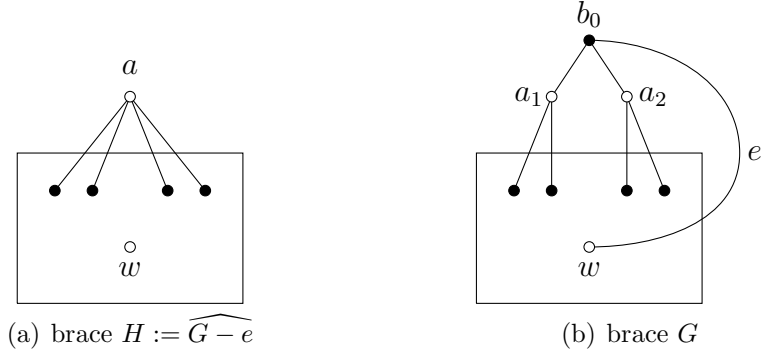

\begin{Def}
\label{Def:expansion-index-one}
{\sc [Expansion of Index One]}
Choose two vertices, say~$a$~and~$w$,
of a simple brace~$H$ that belong to the same color class
such that at least one of them, say~$a$, is noncubic.
A graph~$G$ is obtained from~$H$ by an expansion of index one
if $G:=H\{a \rightarrow a_1b_0a_2\} + b_0w$.
\end{Def}

As an example, one may construct the brace~$Q_{12}$ from the brace~$Q_{10}^+$
by means of an expansion of index zero (i.e., adding an edge) followed by an expansion of index one.
To see this, we let $Q_{10}^+ := Q_{10} + a_1b_1$ as per the labeling in Figure~\ref{fig:Q_10_stable}.
Now, observe that, as per the labeling in Figure~\ref{fig:Q_12},
$Q_{12} := (Q_{10}^+ + a_1b_2)\{a_1 \rightarrow a_1b_0a_0 \} + b_0a_2$.\footnote{In order to be consistent with the labeling used in Figures~\ref{fig:Q_10_stable}~and~\ref{fig:Q_12},
we slightly abuse notation by overloading the label $a_1$.}

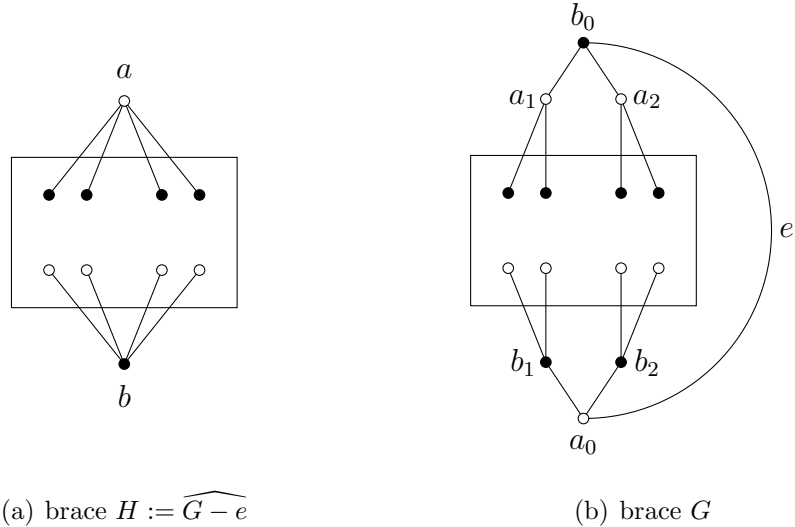
\begin{figure}[!htb]
\centering
\subfigure[brace~$H:=\widehat{G-e}$]
{
\begin{tikzpicture}

\draw (1.5,-0.75) -- (0.5,0.5);
\draw (1.5,-0.75) -- (1,0.5);
\draw (1.5,-0.75) -- (2,0.5);
\draw (1.5,-0.75) -- (2.5,0.5);

\draw (1.5,2.75) -- (0.5,1.5);
\draw (1.5,2.75) -- (1,1.5);
\draw (1.5,2.75) -- (2,1.5);
\draw (1.5,2.75) -- (2.5,1.5);

\draw (1.5,2.75)node{}node[nodelabel,above]{$a$};

\draw (1.5,-0.75)node[fill=black]{}node[nodelabel,below]{$b$};

\draw (0.5,1.5)node[fill=black]{};
\draw (1,1.5)node[fill=black]{};
\draw (2,1.5)node[fill=black]{};
\draw (2.5,1.5)node[fill=black]{};

\draw (0.5,0.5)node{};
\draw (1,0.5)node{};
\draw (2,0.5)node{};
\draw (2.5,0.5)node{};

\draw (0,0) -- (3,0) -- (3,2) -- (0,2) -- (0,0);

\draw (1.5,-2.05)node[nodelabel]{};
\end{tikzpicture}
}
\hspace*{1in}
\subfigure[brace~$G$]
{
\begin{tikzpicture}

\draw (1.5,3.5) to [out=0,in=90] (4,1) to [out=270,in=0] (1.5,-1.5);
\draw (4.2,1)node[nodelabel]{$e$};

\draw (1.5,-1.5) -- (1,-0.75);
\draw (1.5,-1.5) -- (2,-0.75);

\draw (1.5,3.5) -- (1,2.75);
\draw (1.5,3.5) -- (2,2.75);

\draw (1,2.75) -- (0.5,1.5);
\draw (1,2.75) -- (1,1.5);
\draw (2,2.75) -- (2,1.5);
\draw (2,2.75) -- (2.5,1.5);

\draw (1,-0.75) -- (0.5,0.5);
\draw (1,-0.75) -- (1,0.5);
\draw (2,-0.75) -- (2,0.5);
\draw (2,-0.75) -- (2.5,0.5);

\draw (1,2.75)node{};
\draw (0.7,2.75)node[nodelabel]{$a_1$};
\draw (2,2.75)node{};
\draw (2.35,2.75)node[nodelabel]{$a_2$};

\draw (1,-0.75)node[fill=black]{};
\draw (0.7,-0.75)node[nodelabel]{$b_1$};
\draw (2,-0.75)node[fill=black]{};
\draw (2.35,-0.75)node[nodelabel]{$b_2$};

\draw (1.5,3.5)node[fill=black]{};
\draw (1.5,3.85)node[nodelabel]{$b_0$};

\draw (1.5,-1.5)node{};
\draw (1.5,-1.85)node[nodelabel]{$a_0$};

\draw (0.5,1.5)node[fill=black]{};
\draw (1,1.5)node[fill=black]{};
\draw (2,1.5)node[fill=black]{};
\draw (2.5,1.5)node[fill=black]{};

\draw (0.5,0.5)node{};
\draw (1,0.5)node{};
\draw (2,0.5)node{};
\draw (2.5,0.5)node{};

\draw (0,0) -- (3,0) -- (3,2) -- (0,2) -- (0,0);
\end{tikzpicture}
}
\caption{$G$ is obtained from~$H$ by an expansion of index two}
\label{fig:index-two}
\end{figure}

\begin{Def}
\label{Def:expansion-index-two}
{\sc [Expansion of Index Two]}
Choose two noncubic vertices, say~$a$~and~$b$,
of a simple brace~$H$ that lie in distinct color classes.
A graph~$G$ is obtained from~$H$ by an expansion of index two
if $G:=H\{a \rightarrow a_1b_0a_2\}\{b \rightarrow b_1a_0b_2\} + a_0b_0$.
\end{Def}

For convenience, we say that a graph~$G$ is obtained from the simple brace~$H$ by an {\it expansion operation}
if $G$ is obtained from~$H$ by an expansion of index zero, one or two.
McCuaig~\cite{mccu01} proved the following.

\begin{prp}
\label{prp:expansion-operations-preserve-brace-property}
Any graph~$G$, that is obtained from a simple brace~$H$ by an expansion operation,
is also a simple brace.
\end{prp}

Let $H[A,B]$ denote a simple brace. Observe the following.
If $G:=H\{a \rightarrow a_1b_0a_2\} + b_0w$ for some $a,w \in A$,
then $e:=b_0w$ is a strictly thin edge of index one.
On the other hand, if $G:=H\{a \rightarrow a_1b_0a_2\}\{b \rightarrow b_1a_0b_2\} + a_0b_0$
for some $a \in A$ and $b \in B$,
then $e:=a_0b_0$ is a strictly thin edge of index two.
In either case, $H$ is isomorphic to $\widehat{G-e}$.
We now state the `generation version' of McCuaig's Theorem (that is equivalent to
Theorem~\ref{thm:strictly-thin-edge-in-simple-braces}.)

\begin{thm}
\label{thm:generating-simple-braces}
Every simple brace $G \notin \mathcal{G}$ may be obtained from a
(smaller) simple brace by an expansion operation.
\end{thm}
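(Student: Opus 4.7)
The plan is to derive this generation version directly from McCuaig's Theorem (\ref{thm:strictly-thin-edge-in-simple-braces}) by reading each of the three expansion operations as the inverse of the passage from~$G$ to~$\widehat{G-e}$ for a strictly thin edge~$e$. Given a simple brace $G \notin \mathcal{G}$, I would invoke Theorem~\ref{thm:strictly-thin-edge-in-simple-braces} to obtain a strictly thin edge~$e$ and set $H:=\widehat{G-e}$. By definition of strictly thin, $H$ is a simple brace; by Proposition~\ref{prp:compare-order-size-of-G-and-H}, $H$ is smaller than~$G$. It then remains to verify, according to the value of ${\rm index}(e)\in\{0,1,2\}$, that $G$ can be recovered from~$H$ by an expansion operation of the corresponding index.

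If ${\rm index}(e)=0$, then neither end of $e$ has degree two in~$G-e$, so $H=G-e$; writing $e=ab$ we obtain $G=H+ab$, which is an expansion of index zero. If ${\rm index}(e)=1$, let $b_0$ be the unique degree-two end of~$e$ in~$G-e$, write $e=b_0w$, and let $a_1,a_2$ denote the two neighbors of~$b_0$ in~$G-e$. Bicontracting~$b_0$ identifies $a_1$ and $a_2$ into a single vertex~$a$ of~$H$, lying in the same color class as~$w$. The key sub-claim is that $a$ is noncubic in~$H$: since $H$ is simple, $a_1$ and $a_2$ share no neighbor other than~$b_0$ in~$G-e$; and since $G$ is $3$-connected while $a_1,a_2$ are not ends of~$e$, each of $a_1,a_2$ has degree at least three in~$G-e$. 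Hence $\deg_H(a)=\deg_G(a_1)+\deg_G(a_2)-2\ge 4$, so $G=H\{a\to a_1b_0a_2\}+b_0w$ is indeed an expansion of index one. The case ${\rm index}(e)=2$ is entirely analogous: label $e=a_0b_0$, and let the other neighbors of~$a_0$ and~$b_0$ be $b_1,b_2$ and $a_1,a_2$ respectively; the two bicontractions identify $\{b_1,b_2\}$ into a vertex~$b$ and $\{a_1,a_2\}$ into a vertex~$a$, each noncubic in~$H$ by the same degree argument, and then $G=H\{a\to a_1b_0a_2\}\{b\to b_1a_0b_2\}+a_0b_0$ is an expansion of index two.

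The main obstacle---really the only substantive step---is the bookkeeping that establishes noncubicity of the contracted vertex (or vertices) in Cases~1 and~2, so that it qualifies as an input to Definitions~\ref{Def:expansion-index-one} and~\ref{Def:expansion-index-two}. This rests on two facts already at hand: that $H$ is simple, which is built into the definition of a strictly thin edge, and that every vertex of a brace of order at least six has degree at least three. Once these observations are in place, the reconstruction formulas for~$G$ from~$H$ are read off by direct inspection of the bicontraction operation.
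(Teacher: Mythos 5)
Your proposal is correct and makes explicit exactly the argument the paper leaves implicit: the paper verifies that an expansion of index one or two produces a strictly thin edge $e$ with $H\cong\widehat{G-e}$, declares the generation version ``equivalent'' to Theorem~\ref{thm:strictly-thin-edge-in-simple-braces}, and stops there. Your contribution is the converse check --- that a strictly thin edge always yields an expansion operation in the sense of Definitions~\ref{Def:expansion-index-one} and~\ref{Def:expansion-index-two} --- and in particular the noncubicity of the contracted vertex, which you correctly deduce from simplicity of~$H$ (no common neighbor of $a_1,a_2$ other than $b_0$) together with the minimum-degree-three property of braces of order at least six. This is the right bookkeeping and is essentially the same route the paper takes; it simply spells out what the authors regard as an observation.
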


In the following two subsections, we state and prove several lemmas;
these will culminate in the proof of the
Main Theorem~(\ref{thm:narrow-minimality-preserving-pair-in-minimal-braces})
that appears in subsection~\ref{sec:main-theorem}.
However, we need some more terminology in order to state these lemmas.

\smallskip
Consider the graph~$G-e$ where $e$ is a strictly thin edge of a simple brace~$G$.
In~$G-e$, a vertex of degree two is referred to as an {\it inner vertex},
and each of its neighbors is referred to as an {\it outer vertex}.
Note that $G-e$ has precisely ${\rm index}(e)$ inner vertices
and $2 \cdot {\rm index}(e)$ outer vertices; see Figures~\ref{fig:index-one}~and~\ref{fig:index-two}.


\subsection{Index one}
\label{sec:index-one}

Throughout this subsection, we assume that $G[A,B] \notin \mathcal{G}$
is a minimal brace, and that $e:=b_0w$ is a strictly thin edge of index one
where $b_0 \in B$ is the cubic end of~$e$.
We also assume that the simple brace $H:=\widehat{G-e}$ is not minimal;
whence $H$ has superfluous edge(s).
Thus $n_H \geq 4$ and $n_G \geq 5$.
We let $a_1$ and $a_2$ denote
the outer vertices of~$G-e$. See Figure~\ref{fig:index-one}.

\begin{lem}
\label{lem-1:index-one}
Let $f$ denote any superfluous edge of~$H$. Then $G-e$ has a cubic outer vertex
that is an end of~$f$, whereas the other end of~$f$ is noncubic.
\end{lem}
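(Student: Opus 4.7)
The overall strategy will be to attempt to reconstruct $G-f$ from $H-f$ via an expansion of index one; if this reconstruction succeeds, $G-f$ will be a brace (by Proposition \ref{prp:expansion-operations-preserve-brace-property}), contradicting the minimality of $G$. The three conclusions of the lemma correspond to the three distinct obstructions one must rule out to guarantee the reconstruction fails. Two facts I will use repeatedly are: (a)~since $e$ is strictly thin, $H$ is simple, hence $a_1$ and $a_2$ have no common $G$-neighbor other than $b_0$; and (b)~every brace of order at least six is $3$-connected, so has minimum degree at least three.

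First I would show that $f$ is incident to $a$ in $H$. Suppose not, so that $f$ avoids $\{a_1,a_2,b_0\}$ in $G$. I plan to apply an expansion of index one to the brace $H-f$ using the pair $(a,w)$: they lie in the same color class $A$, and $a$ is noncubic in $H-f$ because $\deg_{H-f}(a)=\deg_H(a)=\deg_G(a_1)+\deg_G(a_2)-2\geq 4$ by $3$-connectedness of $G$. Distributing $a$'s edges in $H-f$ to $a_1$ and $a_2$ exactly as in $G$ (each receives $\deg_G(a_i)-1\geq 2$ of them) and then adding $b_0w$ will produce $G-f$, contradicting minimality. Now I may assume $f=a_1b$ for some $b\in B$, after possibly swapping $a_1$ and $a_2$; note $b\neq b_0$ and $b\neq w$. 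To show $a_1$ is cubic, suppose not, so $\deg_G(a_1)\geq 4$. Essentially the same expansion applies to $H-f$: one still has $\deg_{H-f}(a)=\deg_G(a_1)+\deg_G(a_2)-3\geq 4$, and the distribution now gives $a_1$ exactly $\deg_G(a_1)-2\geq 2$ edges and $a_2$ exactly $\deg_G(a_2)-1\geq 2$ edges. Adding back $a_1b_0,a_2b_0,b_0w$ reproduces $G-f$, again a brace, again contradicting minimality.

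It remains to prove that the other end $b$ of $f$ is noncubic, and here the expansion machinery is unnecessary. Because $H$ is simple, $b$ cannot be a $G$-neighbor of both $a_1$ and $a_2$, so it is not adjacent to $a_2$ in $G$; since $b\notin\{b_0,w\}$ either, the bicontraction leaves the neighborhood of $b$ untouched, giving $\deg_H(b)=\deg_G(b)$. If $b$ were cubic, then $\deg_{H-f}(b)=2$; but $H-f$ is a brace of order $2n_H\geq 8$ and hence has minimum degree at least three, a contradiction. The end $a_1$ established above is by definition an outer vertex of $G-e$, completing the lemma. The main delicate point I anticipate is the bookkeeping for Steps~1 and~2: verifying that the specific bi-splitting of $a$ with the $G$-induced distribution of edges really reconstructs $G-f$ (and not merely some brace on the same vertex set); this reduces to carefully tracking the neighborhoods in $G$, $G-e$, $H$, and $H-f$.
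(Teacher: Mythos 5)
Your proof follows the same strategy as the paper: observe that if neither $a_1$ nor $a_2$ is a cubic end of $f$, then $G-f$ can be reconstructed from the brace $H-f$ by an expansion of index one, and is therefore itself a brace by Proposition~\ref{prp:expansion-operations-preserve-brace-property}, contradicting the minimality of $G$. You split this single observation into two sub-cases ($f$ disjoint from $a$, and $f$ incident to a noncubic $a_i$) and write out the degree bookkeeping that the paper leaves to the reader, but the argument is identical. One small remark on your Step~3: the claim $b\neq w$, which you state without justification, is immediate because $f=a_1b$ forces $b\in B$ while $w\in A$; alternatively the argument goes through even if one does not notice this, since $\deg_H(b)\leq\deg_G(b)$ in every case.
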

\begin{proof}
Recall definition~\ref{Def:expansion-index-one}, and observe the following.
If each of $a_1$ and $a_2$ has degree at least three in~$G-f$,
then $G-f$ may be obtained from the brace~$H-f$ by an expansion of index one;
whence $G-f$ is a brace (by Proposition~\ref{prp:expansion-operations-preserve-brace-property});
contradiction.
It follows that one of $a_1$ and $a_2$ is a cubic end of~$f$.
Clearly, the other end of~$f$ is noncubic.
\end{proof}

\begin{lem}
\label{lem-2:index-one}
Let $f_1$ and $f_2$ denote two edges of~$H$ such that $H-f_1-f_2$
is a brace.
Then $G-e$ has a cubic outer vertex that is a common end of $f_1$~and~$f_2$.
\end{lem}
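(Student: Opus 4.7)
My plan is to reduce everything to Lemma~\ref{lem-1:index-one} by first observing that both $f_1$ and $f_2$ are individually superfluous in~$H$, and then to resolve the remaining ambiguity---whether the forced cubic outer ends of $f_1$ and $f_2$ coincide---by a direct degree count inside~$H - f_1 - f_2$.

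The first step is to apply Corollary~\ref{cor:adding-an-edge} to the brace $H - f_1 - f_2$, adding each of $f_1$ and $f_2$ in turn. This yields that both $H - f_1$ and $H - f_2$ are braces, so $f_1$ and $f_2$ are each superfluous edges of~$H$. Invoking Lemma~\ref{lem-1:index-one} on each of them, for $i \in \{1,2\}$ I obtain a cubic outer vertex $v_i \in \{a_1, a_2\}$ that is an end of~$f_i$ in~$G$. If $v_1 = v_2$, the proof is complete.

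Otherwise, after relabeling, $a_1$ and $a_2$ are both cubic, with $f_1$ incident to~$a_1$ and $f_2$ incident to~$a_2$ in~$G$; write $f_1 = a_1 x_1$ and $f_2 = a_2 x_2$, so that as edges of~$H$ these become $f_1 = a x_1$ and $f_2 = a x_2$. Since $H$ is simple and $G$ is obtained from~$H$ by bi-splitting~$a$ into $a_1 b_0 a_2$---in which each edge incident with~$a$ is assigned to exactly one of $a_1$ and~$a_2$---the outer vertices $a_1$~and~$a_2$ share no neighbor other than~$b_0$. Consequently the four neighbors of~$a$ in~$H$ (the two neighbors of~$a_1$ and the two neighbors of~$a_2$, each excluding~$b_0$) are pairwise distinct, and in particular $x_1 \neq x_2$; so $a$ has degree exactly two in $H - f_1 - f_2$.

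This is the desired contradiction: since $n_H = n_G - 1 \geq 4$, the brace $H - f_1 - f_2$ has order at least~$8$, and the proposition stating that every brace of order six or more is $3$-connected forces every vertex (including~$a$) to have degree at least three. The main obstacle is resisting the temptation to argue instead by expanding $H - f_1 - f_2$ back into $G - f_1 - f_2$ via an expansion of index one and invoking minimality of~$G$; that expansion route breaks down precisely when both outer vertices are cubic (since then the bi-split of the corresponding vertex in $H - f_1 - f_2$ would assign only one edge to one side), and it is exactly this residual case that the degree-two conclusion at~$a$ handles cleanly.
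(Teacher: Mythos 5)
Your proof is correct and follows essentially the same route as the paper's: both $f_1$ and $f_2$ are superfluous, Lemma~\ref{lem-1:index-one} gives each a cubic outer end, and in the residual case where those ends are $a_1$ and $a_2$ separately, both are cubic, so $a$ has degree four in $H$ and hence degree two in $H-f_1-f_2$, contradicting that a brace of order at least eight is $3$-connected. Your write-up merely supplies details the paper compresses (why $a_1$ and $a_2$ share no neighbour other than $b_0$, so that $\deg_H(a)$ is exactly four, and the explicit appeal to $3$-connectivity), and correctly identifies the degree-two vertex as lying in $H-f_1-f_2$ rather than $H$, where the paper appears to have a small typo.
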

\begin{proof}
It follows from the hypothesis that each of $f_1$ and $f_2$ is a superfluous edge of~$H$.
By Lemma~\ref{lem-1:index-one}, for $i \in \{1,2\}$, the edge $f_i$ has precisely one cubic end
(in~$G$) that
lies in $\{a_1,a_2\}$.
If the cubic end of $f_1$ is the same as the cubic end of~$f_2$,
then there is nothing to prove.

\smallskip
Now suppose that $f_i \in \partial(a_i)$ for $i \in \{1,2\}$.
Consequently, $a_1$ and $a_2$ are both cubic (in~$G$), whence $a$ has degree
precisely four in~$H$; consequently, $a$ is a vertex of degree two in~$H-f_1-f_2$;
contradiction.
\end{proof}

Note that, if $(e,F)$ is a minimality-preserving pair of~$G$ then any two distinct members
of~$F$, say~$f_1$ and $f_2$, satisfy the hypothesis of Lemma~\ref{lem-2:index-one}.
We thus have the following immediate consequence.

\begin{cor}
\label{cor:index-one}
If $(e,F)$ is a minimality-preserving pair of~$G$, then
$G-e$ has a cubic outer vertex~$v$ such that $F \subset \partial(v)$.
Consequently, $|F| \in \{1,2\}$.
\qed
\end{cor}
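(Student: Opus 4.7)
The plan is to combine Lemmas~\ref{lem-1:index-one}~and~\ref{lem-2:index-one} in a two-step manner: first isolate a candidate cubic outer vertex $v$ from a single edge of $F$, and then propagate $v$ to every other edge of $F$. A preliminary observation is that $F$ must be nonempty, since the standing assumption is that $H$ is not a minimal brace, so $H-\emptyset=H$ cannot equal a minimal brace. Moreover, every edge $f\in F$ is superfluous in $H$: the graph $H-F$ is a brace, and re-adding the edges of $F\setminus\{f\}$ one at a time preserves the brace property by Corollary~\ref{cor:adding-an-edge}, so $H-f$ is also a brace.

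If $|F|=1$, then Lemma~\ref{lem-1:index-one} applied to the unique $f\in F$ already supplies a cubic outer vertex $v\in\{a_1,a_2\}$ that is an end of $f$, and $F\subset\partial(v)$ is immediate. If $|F|\geq 2$, I would fix some $f_1\in F$ and let $v\in\{a_1,a_2\}$ denote its cubic outer end; by Lemma~\ref{lem-1:index-one} this end is unique (the other end of $f_1$ is noncubic). For each remaining $f\in F$, the graph $H-f_1-f$ contains $H-F$ as a subgraph, so iteratively re-adding the edges of $F\setminus\{f_1,f\}$ and invoking Corollary~\ref{cor:adding-an-edge} gives that $H-f_1-f$ is a brace. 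Lemma~\ref{lem-2:index-one} then furnishes a common cubic outer end of $f_1$ and $f$, which can only be $v$ since $v$ is the unique cubic outer end of $f_1$. Hence $f\in\partial(v)$ for every $f\in F$, i.e., $F\subset\partial(v)$.

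For the numerical bound, I would observe that $v$ is cubic in $G$ and that one of its three incident edges is $vb_0$, which is absorbed in the bicontraction defining $H$ and therefore does not belong to $E(H)\supseteq F$. The remaining two edges of $\partial(v)$ are the only candidates, so $|F|\leq 2$, and together with $|F|\geq 1$ this yields $|F|\in\{1,2\}$.

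The main obstacle I anticipate is the propagation step, because Lemma~\ref{lem-2:index-one} applied to distinct pairs in $F$ could a priori return different cubic outer vertices in $\{a_1,a_2\}$. What resolves this is the uniqueness built into Lemma~\ref{lem-1:index-one}: each superfluous edge has only one cubic end in $\{a_1,a_2\}$, so the common end produced by Lemma~\ref{lem-2:index-one} for every pair containing $f_1$ is forced to equal the unique cubic outer end $v$ of $f_1$.
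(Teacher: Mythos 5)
Your argument is correct and follows essentially the route the paper has in mind: the remark preceding Corollary~\ref{cor:index-one} observes that any two members of $F$ satisfy the hypothesis of Lemma~\ref{lem-2:index-one} (precisely via the ``re-add the rest of $F$ using Corollary~\ref{cor:adding-an-edge}'' argument you give), and the corollary is then left as immediate. You have simply made explicit the two points the paper leaves to the reader --- that the uniqueness clause in Lemma~\ref{lem-1:index-one} forces the common cubic outer end produced by Lemma~\ref{lem-2:index-one} to be the same vertex $v$ for every pair containing a fixed $f_1$, and that $|F|\leq 2$ because the edge $vb_0$ is contracted away and hence unavailable --- so this is the same proof, written out in full.
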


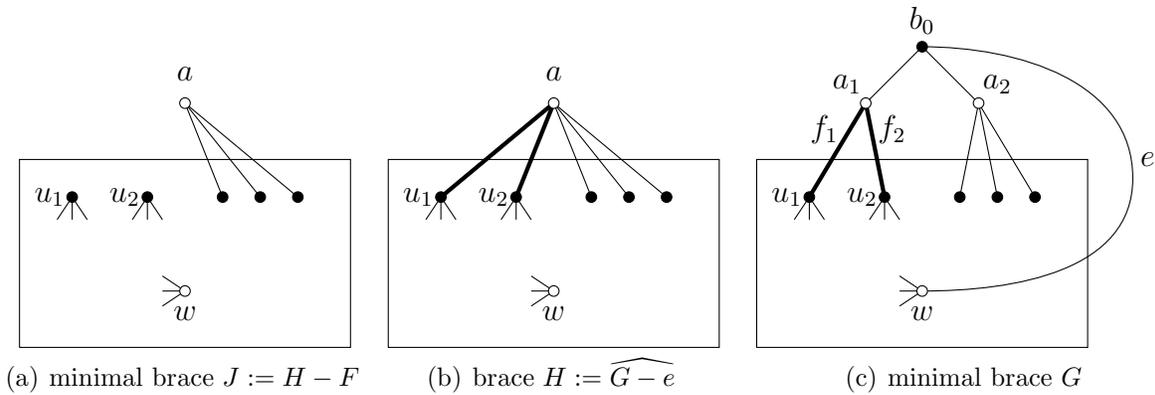
\begin{figure}[!htb]
\centering
\subfigure[minimal brace $J:=H-F$]
{
\begin{tikzpicture}[scale=1]

\draw (2,0.25) -- (1.7,0.05);
\draw (2,0.25) -- (1.7,0.25);
\draw (2,0.25) -- (1.7,0.45);

\draw (0.5,1.5) -- (0.3,1.2);
\draw (0.5,1.5) -- (0.5,1.2);
\draw (0.5,1.5) -- (0.7,1.2);

\draw (1.5,1.5) -- (1.3,1.2);
\draw (1.5,1.5) -- (1.5,1.2);
\draw (1.5,1.5) -- (1.7,1.2);

\draw (2,2.75) -- (2.5,1.5);
\draw (2,2.75) -- (3,1.5);
\draw (2,2.75) -- (3.5,1.5);

\draw (2,2.75)node{}node[nodelabel,above]{$a$};

\draw (0.5,1.5)node[fill=black]{};
\draw (1.5,1.5)node[fill=black]{};
\draw (2.5,1.5)node[fill=black]{};
\draw (3,1.5)node[fill=black]{};
\draw (3.5,1.5)node[fill=black]{};

\draw (0.2,1.5)node[nodelabel]{$u_1$};
\draw (1.2,1.5)node[nodelabel]{$u_2$};

\draw (2,0.25)node{};
\draw (2,-0.05)node[nodelabel]{$w$};
\draw (-0.2,-0.5) -- (4.2,-0.5) -- (4.2,2) -- (-0.2,2) -- (-0.2,-0.5);
\end{tikzpicture}
}
\subfigure[brace~$H:=\widehat{G-e}$]
{
\begin{tikzpicture}[scale=1]

\draw (2,0.25) -- (1.7,0.05);
\draw (2,0.25) -- (1.7,0.25);
\draw (2,0.25) -- (1.7,0.45);

\draw (0.5,1.5) -- (0.3,1.2);
\draw (0.5,1.5) -- (0.5,1.2);
\draw (0.5,1.5) -- (0.7,1.2);

\draw (1.5,1.5) -- (1.3,1.2);
\draw (1.5,1.5) -- (1.5,1.2);
\draw (1.5,1.5) -- (1.7,1.2);

\draw[ultra thick] (2,2.75) -- (0.5,1.5);
\draw[ultra thick] (2,2.75) -- (1.5,1.5);
\draw (2,2.75) -- (2.5,1.5);
\draw (2,2.75) -- (3,1.5);
\draw (2,2.75) -- (3.5,1.5);

\draw (2,2.75)node{}node[nodelabel,above]{$a$};

\draw (0.5,1.5)node[fill=black]{};
\draw (1.5,1.5)node[fill=black]{};
\draw (2.5,1.5)node[fill=black]{};
\draw (3,1.5)node[fill=black]{};
\draw (3.5,1.5)node[fill=black]{};

\draw (0.2,1.5)node[nodelabel]{$u_1$};
\draw (1.2,1.5)node[nodelabel]{$u_2$};

\draw (2,0.25)node{};
\draw (2,-0.05)node[nodelabel]{$w$};
\draw (-0.2,-0.5) -- (4.2,-0.5) -- (4.2,2) -- (-0.2,2) -- (-0.2,-0.5);
\end{tikzpicture}
}
\subfigure[minimal brace~$G$]
{
\begin{tikzpicture}[scale=1]

\draw (2,0.25) -- (1.7,0.05);
\draw (2,0.25) -- (1.7,0.25);
\draw (2,0.25) -- (1.7,0.45);

\draw (0.5,1.5) -- (0.3,1.2);
\draw (0.5,1.5) -- (0.5,1.2);
\draw (0.5,1.5) -- (0.7,1.2);

\draw (1.5,1.5) -- (1.3,1.2);
\draw (1.5,1.5) -- (1.5,1.2);
\draw (1.5,1.5) -- (1.7,1.2);

\draw (2,3.5) to [out=0,in=90] (4.8,1.75) to [out=270,in=0] (2,0.25);
\draw (5,2)node[nodelabel]{$e$};

\draw (2,3.5) -- (1.25,2.75);
\draw (2,3.5) -- (2.75,2.75);

\draw (0.7,2.4)node[nodelabel]{$f_1$};
\draw (1.6,2.4)node[nodelabel]{$f_2$};

\draw[ultra thick] (1.25,2.75) -- (0.5,1.5);
\draw[ultra thick] (1.25,2.75) -- (1.5,1.5);
\draw (2.75,2.75) -- (2.5,1.5);
\draw (2.75,2.75) -- (3,1.5);
\draw (2.75,2.75) -- (3.5,1.5);

\draw (1.25,2.75)node{};
\draw (1,3)node[nodelabel]{$a_1$};
\draw (2.75,2.75)node{};
\draw (3,3)node[nodelabel]{$a_2$};

\draw (2,3.5)node[fill=black]{};
\draw (2,3.85)node[nodelabel]{$b_0$};

\draw (0.5,1.5)node[fill=black]{};
\draw (1.5,1.5)node[fill=black]{};
\draw (2.5,1.5)node[fill=black]{};
\draw (3,1.5)node[fill=black]{};
\draw (3.5,1.5)node[fill=black]{};

\draw (0.2,1.5)node[nodelabel]{$u_1$};
\draw (1.2,1.5)node[nodelabel]{$u_2$};

\draw (2,0.25)node{};
\draw (2,-0.05)node[nodelabel]{$w$};
\draw (-0.2,-0.5) -- (4.2,-0.5) -- (4.2,2) -- (-0.2,2) -- (-0.2,-0.5);

\end{tikzpicture}
}
\caption{$(e,F)$ is a minimality-preserving pair where ${\rm index}(e)=1$ and $F=\{f_1,f_2\}$}
\label{fig:index-one-F-2}
\end{figure}

\begin{lem}
\label{lem-3:index-one}
Suppose that $(e,F)$ is a minimality-preserving pair of~$G$ such that $|F|=2$.
Then, for each $f \in F$, the noncubic end of~$f$ is not adjacent with the noncubic
end of~$e$. Moreover, $G$ is isomorphic to a
stable-extension of the minimal brace~$J:=\widehat{G-e}-F$.
\end{lem}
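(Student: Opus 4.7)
The plan is to argue by contradiction using the characterization of non-superfluous edges. By Corollary~\ref{cor:index-one} and $|F|=2$, both edges of $F$ share a common cubic outer end $v$; adjusting notation take $v = a_1$, so $f_i = a_1 u_i$ in $G$ for $i=1,2$ (equivalently $f_i = a u_i$ in $H$, where $a$ is the bicontraction image of $\{a_1, b_0, a_2\}$). Suppose, for contradiction, that $u_1 w \in E(G)$. Since $G$ is minimal, $u_1 w$ is not superfluous in $G$, so Corollary~\ref{cor:non-superfluous-edges-in-braces} yields a partition $(A_1,A_2)$ of $A(G)$ and $(B_1,B_2)$ of $B(G)$ with $|B_1| = |A_1|+1$ such that $u_1 w$ is the unique $A_1$-$B_2$ edge. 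Take $w \in A_1$ and $u_1 \in B_2$. From $N_G(w) = \{b_0\} \cup N_J(w) \subseteq B_1 \cup \{u_1\}$ and $N_G(u_1) = \{a_1\} \cup N_J(u_1) \subseteq A_2 \cup \{w\}$, we deduce $b_0 \in B_1$ and $a_1 \in A_2$.

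The crux is to project this partition back to $V(J)$ and exhibit a tight cut in $J$. Identifying $a_2 \in V(G)$ with $a \in V(J)$, I would set
\[
X := (A_1 \cap V(J)) \cup (B_1 - \{b_0\}) \cup \{u_1\} \subseteq V(J).
\]
A direct count gives $|X \cap A(J)| = |A_1|$ and $|X \cap B(J)| = |A_1|+1$, so $|X|$ is odd and $|X_+| = |X_-|+1$ with $X_- = X \cap A(J)$. Since the only $A_1$-$B_2$ edge in $G$ is $u_1 w$ and it is incident to $u_1 \in X_+$, there are no edges in $J$ between $X_-$ and $\overline{X}_- = B_2 - \{u_1\}$. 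The one delicate check is the case $a_2 \in A_1$: then $a \in X_-$, and using that $a_2 u_i \notin E(G)$ (lest $H$ have parallel edges $a u_i$) together with the partition constraint $N_G(a_2) \subseteq B_1$, all of $a$'s $J$-neighbors lie in $B_1 - \{b_0\} \subseteq X_+$. By Proposition~\ref{prp:tight-cuts-in-bip-mcgs}, $\partial_J(X)$ is tight.

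The contradiction emerges from a short case analysis on $|A_1|$. For $1 \leq |A_1| \leq n_J - 2$ both shores of $\partial_J(X)$ have at least three vertices, so $\partial_J(X)$ is a nontrivial tight cut of $J$, contradicting that $J$ is a brace. If $|A_1| = n_J - 1$ then $|A_2|=2$, so $N_G(u_1) \subseteq A_2 \cup \{w\}$ has at most three elements, contradicting $d_G(u_1) \geq 4$. The remaining cases $|A_1| = n_J$ and $|A_1| = n_G$ force $B_2 = \emptyset$ and $A_2 = \emptyset$ respectively, contradicting $u_1 \in B_2$ and $a_1 \in A_2$. Hence $u_1 w \notin E(G)$, and the same argument applied to $f_2$ gives $u_2 w \notin E(G)$. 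The parenthetical follows: $S := \{a, w, u_1, u_2\}$ is stable in $J$ (same-colour pairs are trivially non-edges; $a u_i = f_i \notin E(J)$ by construction; $w u_i \notin E(J)$ by what we just proved), and identifying $a_1, b_0$ as the extension vertices of Definition~\ref{Def:stable-extension} yields the five prescribed edges $a_1 b_0, a_1 u_1, a_1 u_2, b_0 a_2, b_0 w$, so $G$ is the $S$-extension of $J$. The main obstacle is setting up the projected set $X$ correctly and verifying the absence of $X_-$-$\overline{X}_-$ edges through the renaming $a_2 \leftrightarrow a$; the size case analysis is then routine counting.
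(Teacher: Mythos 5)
Your proof is correct, but it takes a genuinely different route from the paper's. The paper argues directly with the expansion machinery: assuming $wu_2 \in E(G)$, it observes that $G - wu_2$ can be constructed from the brace $J' := J + au_2$ by an expansion of index one (namely $J'\{u_2 \rightarrow u_2a_1b_0\} + a_1u_1$), so by Proposition~\ref{prp:expansion-operations-preserve-brace-property} the graph $G - wu_2$ is itself a brace, contradicting minimality of $G$. You instead go through the ``dual'' picture: since $wu_1$ is not superfluous in the minimal brace $G$, Corollary~\ref{cor:non-superfluous-edges-in-braces} gives partitions $(A_1,A_2)$, $(B_1,B_2)$; you pin down $b_0 \in B_1$ and $a_1 \in A_2$, project the partition onto $V(J)$ via the bicontraction map, verify the two conditions of Proposition~\ref{prp:tight-cuts-in-bip-mcgs} (including the delicate $a_2 \in A_1$ case, which you handle correctly using $N_J(a) = N_G(a_2) - \{b_0\}$ and the simplicity of $H$), and then eliminate the boundary sizes $|A_1| \in \{n_J - 1, n_J, n_G\}$ by degree and emptiness arguments, leaving a nontrivial tight cut of $J$ --- contradiction. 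I checked the bookkeeping: $n_J \geq 4$ in this subsection so the tight-cut contradiction is available, $a_1, a_2, w$ are the three distinct neighbours of $b_0$ so the identifications are unambiguous, and $u_1 \neq u_2$ since $H$ is simple. The paper's argument is shorter and reuses its framework; yours is more elementary, trading the single appeal to Proposition~\ref{prp:expansion-operations-preserve-brace-property} for a hands-on tight-cut computation, which has the small virtue of exposing exactly where the hypothesis $|F| = 2$ and the shared cubic end $a_1$ enter.
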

\begin{proof}
Let $F:=\{f_1,f_2\}$. We invoke Lemma~\ref{lem-2:index-one}, and adjust notation so
that $a_1$ is cubic and $f_1,f_2 \in \partial(a_1)$. We let $u_1$ and $u_2$ denote the
noncubic ends of $f_1$~and~$f_2$, respectively. We let $J:=H-F$. Thus $J$ is a minimal brace.
See Figure~\ref{fig:index-one-F-2}.

\smallskip
Our goal is to prove that $wu_1, wu_2 \notin E(G)$.
By symmetry, it suffices to prove that $wu_2 \notin E(G)$.
Suppose to the contrary that $wu_2 \in E(G)$.
See Figure~\ref{fig:index-one-F-2-adjacency}.

\begin{figure}[!htb]
\centering
\subfigure[minimal brace~$G$]
{
\begin{tikzpicture}[scale=1]

\draw (2,0.25) to [out=90,in=0] (1.5,1.5);

\draw (2,0.25) -- (1.7,0.05);
\draw (2,0.25) -- (1.7,0.25);

\draw (0.5,1.5) -- (0.3,1.2);
\draw (0.5,1.5) -- (0.5,1.2);
\draw (0.5,1.5) -- (0.7,1.2);

\draw (1.5,1.5) -- (1.3,1.2);
\draw (1.5,1.5) -- (1.5,1.2);

\draw (2,3.5) to [out=0,in=90] (4.8,1.75) to [out=270,in=0] (2,0.25);
\draw (5,2)node[nodelabel]{$e$};

\draw (2,3.5) -- (1.25,2.75);
\draw (2,3.5) -- (2.75,2.75);

\draw (0.7,2.4)node[nodelabel]{$f_1$};
\draw (1.6,2.4)node[nodelabel]{$f_2$};

\draw[ultra thick] (1.25,2.75) -- (0.5,1.5);
\draw[ultra thick] (1.25,2.75) -- (1.5,1.5);
\draw (2.75,2.75) -- (2.5,1.5);
\draw (2.75,2.75) -- (3,1.5);
\draw (2.75,2.75) -- (3.5,1.5);

\draw (1.25,2.75)node{};
\draw (1,3)node[nodelabel]{$a_1$};
\draw (2.75,2.75)node{};
\draw (3,3)node[nodelabel]{$a_2$};

\draw (2,3.5)node[fill=black]{};
\draw (2,3.85)node[nodelabel]{$b_0$};

\draw (0.5,1.5)node[fill=black]{};
\draw (1.5,1.5)node[fill=black]{};
\draw (2.5,1.5)node[fill=black]{};
\draw (3,1.5)node[fill=black]{};
\draw (3.5,1.5)node[fill=black]{};

\draw (0.2,1.5)node[nodelabel]{$u_1$};
\draw (1.2,1.5)node[nodelabel]{$u_2$};

\draw (2,0.25)node{};
\draw (2,-0.05)node[nodelabel]{$w$};
\draw (-0.2,-0.5) -- (4.2,-0.5) -- (4.2,2) -- (-0.2,2) -- (-0.2,-0.5);

\end{tikzpicture}
}
\subfigure[minimal brace $J:=H-F$]
{
\begin{tikzpicture}[scale=1]

\draw (2,0.25) to [out=90,in=0] (1.5,1.5);

\draw (2,0.25) -- (1.7,0.05);
\draw (2,0.25) -- (1.7,0.25);

\draw (0.5,1.5) -- (0.3,1.2);
\draw (0.5,1.5) -- (0.5,1.2);
\draw (0.5,1.5) -- (0.7,1.2);

\draw (1.5,1.5) -- (1.3,1.2);
\draw (1.5,1.5) -- (1.5,1.2);

\draw (2,2.75) -- (2.5,1.5);
\draw (2,2.75) -- (3,1.5);
\draw (2,2.75) -- (3.5,1.5);

\draw (2,2.75)node{}node[nodelabel,above]{$a$};

\draw (0.5,1.5)node[fill=black]{};
\draw (1.5,1.5)node[fill=black]{};
\draw (2.5,1.5)node[fill=black]{};
\draw (3,1.5)node[fill=black]{};
\draw (3.5,1.5)node[fill=black]{};

\draw (0.2,1.5)node[nodelabel]{$u_1$};
\draw (1.2,1.5)node[nodelabel]{$u_2$};

\draw (2,0.25)node{};
\draw (2,-0.05)node[nodelabel]{$w$};
\draw (-0.2,-0.5) -- (4.2,-0.5) -- (4.2,2) -- (-0.2,2) -- (-0.2,-0.5);
\end{tikzpicture}
}
\subfigure[brace $J':=J+au_2$]
{
\begin{tikzpicture}[scale=1]

\draw (2,2.75) -- (1.5,1.5);

\draw (2,0.25) to [out=90,in=0] (1.5,1.5);

\draw (2,0.25) -- (1.7,0.05);
\draw (2,0.25) -- (1.7,0.25);

\draw (0.5,1.5) -- (0.3,1.2);
\draw (0.5,1.5) -- (0.5,1.2);
\draw (0.5,1.5) -- (0.7,1.2);

\draw (1.5,1.5) -- (1.3,1.2);
\draw (1.5,1.5) -- (1.5,1.2);

\draw (2,2.75) -- (2.5,1.5);
\draw (2,2.75) -- (3,1.5);
\draw (2,2.75) -- (3.5,1.5);

\draw (2,2.75)node{}node[nodelabel,above]{$a$};

\draw (0.5,1.5)node[fill=black]{};
\draw (1.5,1.5)node[fill=black]{};
\draw (2.5,1.5)node[fill=black]{};
\draw (3,1.5)node[fill=black]{};
\draw (3.5,1.5)node[fill=black]{};

\draw (0.2,1.5)node[nodelabel]{$u_1$};
\draw (1.2,1.5)node[nodelabel]{$u_2$};

\draw (2,0.25)node{};
\draw (2,-0.05)node[nodelabel]{$w$};
\draw (-0.2,-0.5) -- (4.2,-0.5) -- (4.2,2) -- (-0.2,2) -- (-0.2,-0.5);
\end{tikzpicture}
}
\caption{Illustration for the proof of Lemma~\ref{lem-3:index-one}}
\label{fig:index-one-F-2-adjacency}
\end{figure}
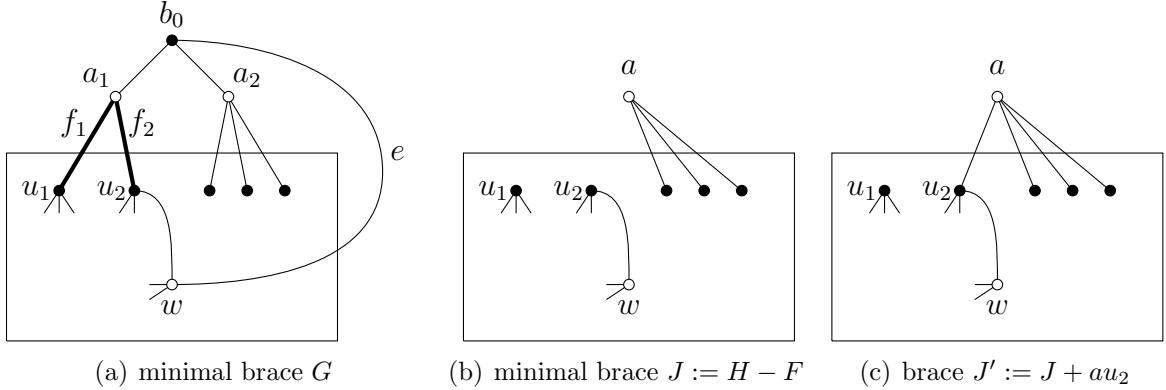

In this case, $w$ and $u_2$ are adjacent in~$J$ as well.
Observe that one may obtain the graph $G':=G-wu_2$ from the brace~$J':=J+au_2$
by an expansion of index one.
In particular, $G'=J' \{u_2 \rightarrow u_2a_1b_0 \} + a_1u_1$.
By Proposition~\ref{prp:expansion-operations-preserve-brace-property},
$G'$ is a brace --- contrary to our hypothesis that $G$ is minimal.

\smallskip
We thus conclude that $wu_1, wu_2 \notin E(G)$.
Consequently, $S:= \{a,w,u_1,u_2\}$ is a stable set of~$J$ that meets each color
class in two vertices. Observe that $G$ is isomorphic to the $S$-extension of~$J$
(with $a_1$ and $b_0$ playing the role of the extension vertices).
This completes the proof of Lemma~\ref{lem-3:index-one}.
\end{proof}

\subsection{Index two}
\label{sec:index-two}

Throughout this subsection, we assume that $G[A,B] \notin \mathcal{G}$ is a minimal brace
that is devoid of strictly thin edges of index one,
and that \mbox{$e:=a_0b_0$} is a strictly thin edge of index two.
We also assume that the simple brace \mbox{$H:=\widehat{G-e}$} is not minimal;
whence $H$ has superfluous edge(s).
Thus $n_H \geq 4$ and $n_G \geq 6$. We let $a_1,a_2 \in A$ and $b_1,b_2 \in B$
denote the outer vertices of~$G-e$. See Figure~\ref{fig:index-two}.

\smallskip
Since $e$ is a strictly thin edge, $G$ has at most one edge that has one end in~$\{a_1,a_2\}$
and the other end in~$\{b_1,b_2\}$; furthermore, $G$ has (precisely) one such edge
if and only if $ab \in E(H)$.

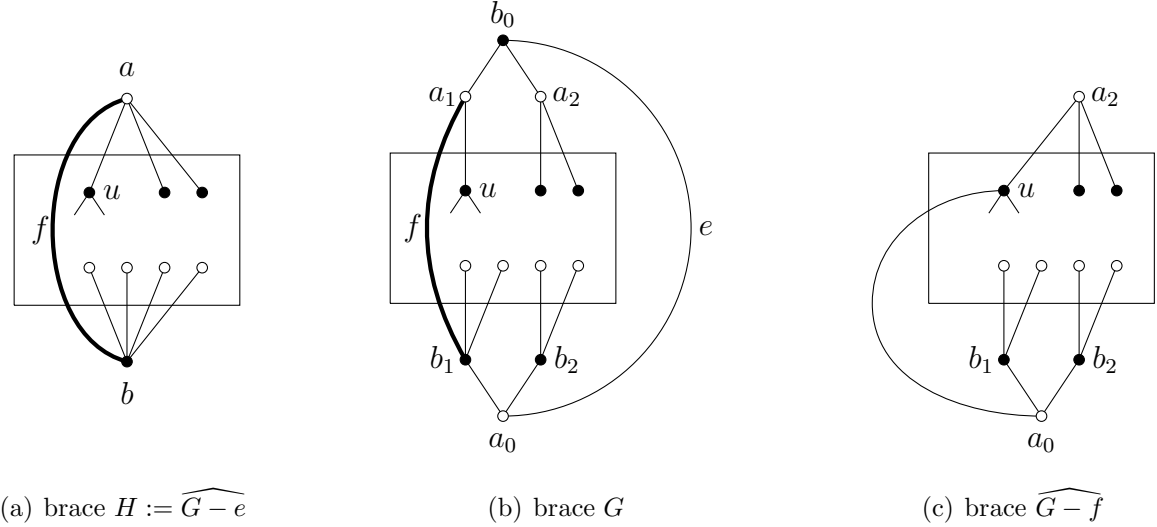
\begin{figure}[!htb]
\centering
\subfigure[brace~$H:=\widehat{G-e}$]
{
\begin{tikzpicture}

\draw (1.3,1.5)node[nodelabel]{$u$};

\draw (1,1.5) -- (0.8,1.2);
\draw (1,1.5) -- (1.2,1.2);

\draw[ultra thick] (1.5,2.75) to [out=195,in=165] (1.5,-0.75);
\draw (0.35,1)node[nodelabel]{$f$};

\draw (1.5,-0.75) -- (1,0.5);
\draw (1.5,-0.75) -- (1.5,0.5);
\draw (1.5,-0.75) -- (2,0.5);
\draw (1.5,-0.75) -- (2.5,0.5);

\draw (1.5,2.75) -- (1,1.5);
\draw (1.5,2.75) -- (2,1.5);
\draw (1.5,2.75) -- (2.5,1.5);

\draw (1.5,2.75)node{}node[nodelabel,above]{$a$};

\draw (1.5,-0.75)node[fill=black]{}node[nodelabel,below]{$b$};

\draw (1,1.5)node[fill=black]{};
\draw (2,1.5)node[fill=black]{};
\draw (2.5,1.5)node[fill=black]{};

\draw (1,0.5)node{};
\draw (1.5,0.5)node{};
\draw (2,0.5)node{};
\draw (2.5,0.5)node{};

\draw (0,0) -- (3,0) -- (3,2) -- (0,2) -- (0,0);

\draw (1.5,-2.05)node[nodelabel]{};
\end{tikzpicture}
}
\hspace*{0.5in}
\subfigure[brace~$G$]
{
\begin{tikzpicture}

\draw (1.3,1.5)node[nodelabel]{$u$};

\draw (1,1.5) -- (0.8,1.2);
\draw (1,1.5) -- (1.2,1.2);

\draw[ultra thick] (1,2.75) to [out=240,in=120] (1,-0.75);
\draw (0.3,1)node[nodelabel]{$f$};

\draw (1.5,3.5) to [out=0,in=90] (4,1) to [out=270,in=0] (1.5,-1.5);
\draw (4.2,1)node[nodelabel]{$e$};

\draw (1.5,-1.5) -- (1,-0.75);
\draw (1.5,-1.5) -- (2,-0.75);

\draw (1.5,3.5) -- (1,2.75);
\draw (1.5,3.5) -- (2,2.75);

\draw (1,2.75) -- (1,1.5);
\draw (2,2.75) -- (2,1.5);
\draw (2,2.75) -- (2.5,1.5);

\draw (1,-0.75) -- (1,0.5);
\draw (1,-0.75) -- (1.5,0.5);
\draw (2,-0.75) -- (2,0.5);
\draw (2,-0.75) -- (2.5,0.5);

\draw (1,2.75)node{};
\draw (0.7,2.75)node[nodelabel]{$a_1$};
\draw (2,2.75)node{};
\draw (2.35,2.75)node[nodelabel]{$a_2$};

\draw (1,-0.75)node[fill=black]{};
\draw (0.7,-0.75)node[nodelabel]{$b_1$};
\draw (2,-0.75)node[fill=black]{};
\draw (2.35,-0.75)node[nodelabel]{$b_2$};

\draw (1.5,3.5)node[fill=black]{};
\draw (1.5,3.85)node[nodelabel]{$b_0$};

\draw (1.5,-1.5)node{};
\draw (1.5,-1.85)node[nodelabel]{$a_0$};

\draw (1,1.5)node[fill=black]{};
\draw (2,1.5)node[fill=black]{};
\draw (2.5,1.5)node[fill=black]{};

\draw (1,0.5)node{};
\draw (1.5,0.5)node{};
\draw (2,0.5)node{};
\draw (2.5,0.5)node{};

\draw (0,0) -- (3,0) -- (3,2) -- (0,2) -- (0,0);
\end{tikzpicture}
}
\hspace*{0.5in}
\subfigure[brace~$\widehat{G-f}$]
{
\begin{tikzpicture}

\draw (1.3,1.5)node[nodelabel]{$u$};

\draw (2,2.75) -- (1,1.5);
\draw (1,1.5) to [out=180,in=90] (-0.75,0) to [out=270,in=180] (1.5,-1.5);

\draw (1,1.5) -- (0.8,1.2);
\draw (1,1.5) -- (1.2,1.2);



\draw (1.5,-1.5) -- (1,-0.75);
\draw (1.5,-1.5) -- (2,-0.75);


\draw (2,2.75) -- (2,1.5);
\draw (2,2.75) -- (2.5,1.5);

\draw (1,-0.75) -- (1,0.5);
\draw (1,-0.75) -- (1.5,0.5);
\draw (2,-0.75) -- (2,0.5);
\draw (2,-0.75) -- (2.5,0.5);

\draw (2,2.75)node{};
\draw (2.35,2.75)node[nodelabel]{$a_2$};

\draw (1,-0.75)node[fill=black]{};
\draw (0.7,-0.75)node[nodelabel]{$b_1$};
\draw (2,-0.75)node[fill=black]{};
\draw (2.35,-0.75)node[nodelabel]{$b_2$};


\draw (1.5,-1.5)node{};
\draw (1.5,-1.85)node[nodelabel]{$a_0$};

\draw (1,1.5)node[fill=black]{};
\draw (2,1.5)node[fill=black]{};
\draw (2.5,1.5)node[fill=black]{};

\draw (1,0.5)node{};
\draw (1.5,0.5)node{};
\draw (2,0.5)node{};
\draw (2.5,0.5)node{};

\draw (0,0) -- (3,0) -- (3,2) -- (0,2) -- (0,0);
\end{tikzpicture}
}
\caption{Illustration for the proof of Lemma~\ref{lem-1:index-two}}
\label{fig-1:index-two}
\end{figure}

\begin{lem}
\label{lem-1:index-two}
Let $f$ denote any superfluous edge of~$H$.
Then $G-e$ has a cubic outer vertex that is an end of~$f$;
the other end of $f$ is either noncubic, or it is another cubic outer vertex.
\end{lem}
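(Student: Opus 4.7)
I plan to mirror the argument of Lemma~\ref{lem-1:index-one}, replacing the expansion of index one with the expansion of index two (Definition~\ref{Def:expansion-index-two}) and appealing to Proposition~\ref{prp:expansion-operations-preserve-brace-property}. The first step is to rule out that $f$ is incident with neither $a$ nor $b$ in~$H$. If it were, the degrees of~$a$ and~$b$ in $H-f$ would be unchanged (so both remain noncubic), and the bi-splitting distributions of $a$ and $b$ that produce $G$ from $H$ would remain valid in $H-f$ since they depend only on the incidences with $a$ and $b$. The resulting expansion of index two of $H-f$ would therefore coincide with $G-f$; by Proposition~\ref{prp:expansion-operations-preserve-brace-property} this makes $G-f$ a simple brace, contradicting the minimality of~$G$. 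Hence $f \in \partial_H(a) \cup \partial_H(b)$, and by the symmetry $a \leftrightarrow b$, $a_i \leftrightarrow b_i$, I may assume $f \in \partial_H(a)$.

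\smallskip
I then split the argument into \textbf{Case A} ($f = au$ in $H$ with $u \neq b$) and \textbf{Case B} ($f = ab$). In Case~A the edge $f$ corresponds to $a_i u$ in $G$ for some $i \in \{1,2\}$; relabeling, take $i=1$. If $a_1$ were noncubic in $G$, then bi-splitting $a$ in $H-f$ with the distribution inherited from $G-f$ would give $a_1$ exactly $\deg_G(a_1) - 2 \geq 2$ edges and $a_2$ exactly $\deg_G(a_2) - 1 \geq 2$ edges; combined with the unchanged bi-splitting of~$b$ (whose degree in $H-f$ is still at least $4$), this realizes $G-f$ as an expansion of index two of $H-f$, making $G-f$ a brace --- contradiction. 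So $a_1$ is cubic. To show $u$ is noncubic in~$G$, suppose otherwise: since $u \neq b$ and $H$ is simple, $u$ is adjacent to at most one of $a_1, a_2$ in~$G$, so $\deg_H(u) = \deg_G(u) = 3$ and therefore $\deg_{H-f}(u) = 2$. But $|V(H-f)| = 2n_H \geq 8$, so the brace $H-f$ is $3$-connected and has minimum degree at least~$3$, a contradiction.

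\smallskip
In Case~B the edge $f = ab$ corresponds to the unique edge of $G$ joining two outer vertices; adjust notation so that $f = a_1 b_1$. If both $a_1$ and $b_1$ were noncubic in~$G$, then bi-splitting $a$ and $b$ in $H-f$ with the $G$-distributions (now with $f$ removed) leaves each of $a_1, b_1$ with $\geq 2$ edges and each of $a_2, b_2$ with $\geq 2$ edges; the resulting expansion of index two of $H-f$ reproduces $G-f$, again contradicting minimality. Hence at least one of $a_1, b_1$ is cubic --- this is the required cubic outer vertex, and its partner across~$f$ is the other outer vertex, which may be cubic or noncubic, exactly as the statement allows. The main obstacle throughout is the distributional bookkeeping: verifying in every subcase that the $H$-to-$G$ bi-splitting distributions remain valid (i.e.\ assign at least two edges to each of $a_1,a_2,b_1,b_2$) after deleting~$f$. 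The one genuinely new ingredient beyond the index-one analysis is the $3$-connectivity argument ruling out a cubic non-outer endpoint~$u$ in Case~A.
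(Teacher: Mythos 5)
Your first step and Case~A match the paper's argument: the paper combines them into the single observation that if all of $a_1,a_2,b_1,b_2$ have degree at least three in $G-f$ then $G-f$ is an expansion of index two of $H-f$ and hence a brace; you split this into first ruling out that $f$ is incident with neither $a$ nor $b$ in~$H$, and then ruling out a noncubic outer end of $f$, which is fine. Your $3$-connectivity argument for the non-outer endpoint~$u$ is essentially what the paper invokes implicitly when it says the conclusion is ``clear.''

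Case~B, however, has a genuine gap. You stop at ``at least one of $a_1,b_1$ is cubic'' and declare that the other ``may be cubic or noncubic, exactly as the statement allows.'' That is a misreading of the lemma's intent. What is actually needed downstream --- in Corollary~\ref{cor:index-two} (``if $F \cap \partial(u) \neq \emptyset$ then $u$ is cubic''), and hence in part~(iii) of the Main Theorem (``an end of~$f$ is cubic if and only if it is an outer vertex'') --- is that \emph{both} ends of $f$ are cubic when $f$ joins two outer vertices; permitting a noncubic outer end would break both of those statements. Your argument never uses the standing hypothesis of Subsection~\ref{sec:index-two} that $G$ is devoid of strictly thin edges of index one, and that is precisely the missing ingredient. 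The paper closes this case by observing that if $a_1$ is cubic but $b_1$ is noncubic, then $\widehat{G-f}$ can be written as $(H-f)\{b \rightarrow b_1a_0b_2\} + a_0u$ (with $u$ the third neighbour of $a_1$), an expansion of index \emph{one} of the brace $H-f$; so $f$ would be a strictly thin edge of index one of~$G$ --- contradicting the standing hypothesis, whence $b_1$ must be cubic as well.
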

\begin{proof}
Recall definition~\ref{Def:expansion-index-two},
and observe the following. If each of $a_1,a_2,b_1,b_2$ has degree at least
three in~$G-f$, then $G-f$ may be obtained from the brace~$H-f$ by an expansion
of index two; whence $G-f$ is a brace
(by Proposition~\ref{prp:expansion-operations-preserve-brace-property});
contradiction. It follows that one of $a_1,a_2,b_1,b_2$ is a cubic
end of~$f$. Adjust notation so that $a_1$ is a cubic end of~$f$.
Clearly, if the other end of $f$ is not in $\{b_1,b_2\}$ then it is noncubic.

\smallskip
Now suppose that the other end of $f$ is in $\{b_1,b_2\}$; adjust notation so that $f:=a_1b_1$.
Assume that $b_1$ is noncubic.
Let $u$ denote the neighbor of $a_1$ that is distinct from $b_0$ and $b_1$.
See Figure~\ref{fig-1:index-two}.
Observe that the graph~$\widehat{G-f}$ may be obtained from the brace~$H-f$
by an expansion of index one;
in particular, $\widehat{G-f} := (H-f) \{ b \rightarrow  b_1a_0b_2 \} + a_0u$.
Thus $\widehat{G-f}$ is a brace
(by Proposition~\ref{prp:expansion-operations-preserve-brace-property}).
Consequently, $f$ is a strictly thin edge of index one,
contrary to our hypothesis that $G$ is devoid of such edges.
Thus $b_1$ is also cubic.
\end{proof}

\begin{lem}
\label{lem-2:index-two}
Let $f$ denote any superfluous edge of~$H$. Then, in~$G-e$,
either $f$ joins two outer vertices, or otherwise $f$
is adjacent with an edge that joins two outer vertices.
\end{lem}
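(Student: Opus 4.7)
The plan is to follow the contradiction template used in the proof of Lemma~\ref{lem-1:index-two}. By Lemma~\ref{lem-1:index-two}, after relabeling if necessary, I may assume $f = a_1 v$, where $a_1$ is a cubic outer vertex and $v$ is either a cubic outer vertex or noncubic. If $v \in \{b_1, b_2\}$, then $f$ already joins two outer vertices and we are done. Otherwise $v$ is not outer, so Lemma~\ref{lem-1:index-two} forces $v$ to be noncubic; in particular $v \in V(H) - \{a, b\}$.

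Since $a_1$ is cubic in $G$, its three neighbors are $b_0$, $v$, and some third vertex $u' \in B$; it suffices to prove that $u' \in \{b_1, b_2\}$, for then $a_1 u'$ is an edge joining two outer vertices and is adjacent to $f$ at $a_1$. Suppose for contradiction that $u' \notin \{b_1, b_2\}$; then $u'$ corresponds to a vertex of $V(H) - \{a, b\}$. In $G - f$ the only vertex of degree two is $a_1$ (with distinct neighbors $b_0, u'$), so the retract $\widehat{G - f}$ is obtained by bicontracting $a_1$, which merges $b_0$ and $u'$ into a single vertex~$v^*$.

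I would then verify, by comparing neighborhoods vertex-by-vertex, that
\begin{equation*}
\widehat{G - f} \;\cong\; (H - f)\{b \rightarrow b_1 a_0 b_2\} + a_0 u',
\end{equation*}
where the bi-split of $b$ is exactly the one used to construct $G$; under this isomorphism, $v^*$ plays the role of $u'$ on the right-hand side and $a_2$ plays the role of $a$. Since $H - f$ is a brace ($f$ being superfluous in $H$) and $b$ remains noncubic in $H - f$, the right-hand side exhibits $\widehat{G - f}$ as an expansion of index one of the brace $H - f$; Proposition~\ref{prp:expansion-operations-preserve-brace-property} then yields that $\widehat{G - f}$ is a brace. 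Consequently, $f$ would be a strictly thin edge of $G$ of index one, contradicting our standing assumption that $G$ is devoid of such edges. The main obstacle is the edge-by-edge bookkeeping needed to establish the displayed isomorphism, which essentially parallels the analogous verification in the proof of Lemma~\ref{lem-1:index-two}.
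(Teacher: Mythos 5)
Your proof is correct and follows essentially the same route as the paper's: invoke Lemma~\ref{lem-1:index-two} to locate the cubic outer end $a_1$ of $f$, dispose of the case where $f$ itself joins two outer vertices, and then, assuming the third neighbor of $a_1$ avoids $\{b_1,b_2\}$, recognize $\widehat{G-f}$ as an index-one expansion of $H-f$ to contradict the standing assumption that $G$ has no strictly thin edge of index one. The only difference is cosmetic: the paper phrases the contradiction as ``assume $a_1b_1, a_1b_2 \notin E(G)$'' while you phrase it as ``assume $u' \notin \{b_1,b_2\}$,'' but these are the same assertion, and your notation $v, u'$ simply swaps the paper's $u', u$.
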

\begin{proof}
By Lemma~\ref{lem-1:index-two}, $G-e$ has a cubic outer vertex that is an end of~$f$.
Adjust notation so that $a_1$ is a cubic end of~$f$.
If the other end of~$f$ lies in $\{b_1,b_2\}$ then we are done.

\begin{figure}[!htb]
\centering
\subfigure[brace~$H:=\widehat{G-e}$]
{
\begin{tikzpicture}

\draw (1.3,1.55)node[nodelabel]{$u'$};
\draw (0.25,1.5)node[nodelabel]{$u$};

\draw (0.5,1.5) -- (0.3,1.2);
\draw (0.5,1.5) -- (0.7,1.2);

\draw (1,1.5) -- (1,1.2);
\draw (1,1.5) -- (0.8,1.2);
\draw (1,1.5) -- (1.2,1.2);

\draw (1.5,-0.75) -- (0.5,0.5);
\draw (1.5,-0.75) -- (1,0.5);
\draw (1.5,-0.75) -- (2,0.5);
\draw (1.5,-0.75) -- (2.5,0.5);

\draw (1.5,2.75) -- (0.5,1.5);
\draw[ultra thick] (1.5,2.75) -- (1,1.5);
\draw (1.5,2.75) -- (2,1.5);
\draw (1.5,2.75) -- (2.5,1.5);

\draw (1.5,2.75)node{}node[nodelabel,above]{$a$};

\draw (1.5,-0.75)node[fill=black]{}node[nodelabel,below]{$b$};

\draw (0.5,1.5)node[fill=black]{};
\draw (1,1.5)node[fill=black]{};
\draw (2,1.5)node[fill=black]{};
\draw (2.5,1.5)node[fill=black]{};

\draw (0.5,0.5)node{};
\draw (1,0.5)node{};
\draw (2,0.5)node{};
\draw (2.5,0.5)node{};

\draw (0,0) -- (3,0) -- (3,2) -- (0,2) -- (0,0);

\draw (1.5,-2.05)node[nodelabel]{};
\end{tikzpicture}
}
\hspace*{0.5in}
\subfigure[brace~$G$]
{
\begin{tikzpicture}

\draw (1.3,1.55)node[nodelabel]{$u'$};
\draw (0.25,1.5)node[nodelabel]{$u$};

\draw (0.5,1.5) -- (0.3,1.2);
\draw (0.5,1.5) -- (0.7,1.2);

\draw (1,1.5) -- (1,1.2);
\draw (1,1.5) -- (0.8,1.2);
\draw (1,1.5) -- (1.2,1.2);

\draw (1.5,3.5) to [out=0,in=90] (4,1) to [out=270,in=0] (1.5,-1.5);
\draw (4.2,1)node[nodelabel]{$e$};

\draw (1.5,-1.5) -- (1,-0.75);
\draw (1.5,-1.5) -- (2,-0.75);

\draw (1.5,3.5) -- (1,2.75);
\draw (1.5,3.5) -- (2,2.75);

\draw (1,2.75) -- (0.5,1.5);
\draw[ultra thick] (1,2.75) -- (1,1.5);
\draw (1.2,2.3)node[nodelabel]{$f$};
\draw (2,2.75) -- (2,1.5);
\draw (2,2.75) -- (2.5,1.5);

\draw (1,-0.75) -- (0.5,0.5);
\draw (1,-0.75) -- (1,0.5);
\draw (2,-0.75) -- (2,0.5);
\draw (2,-0.75) -- (2.5,0.5);

\draw (1,2.75)node{};
\draw (0.7,2.75)node[nodelabel]{$a_1$};
\draw (2,2.75)node{};
\draw (2.35,2.75)node[nodelabel]{$a_2$};

\draw (1,-0.75)node[fill=black]{};
\draw (0.7,-0.75)node[nodelabel]{$b_1$};
\draw (2,-0.75)node[fill=black]{};
\draw (2.35,-0.75)node[nodelabel]{$b_2$};

\draw (1.5,3.5)node[fill=black]{};
\draw (1.5,3.85)node[nodelabel]{$b_0$};

\draw (1.5,-1.5)node{};
\draw (1.5,-1.85)node[nodelabel]{$a_0$};

\draw (0.5,1.5)node[fill=black]{};
\draw (1,1.5)node[fill=black]{};
\draw (2,1.5)node[fill=black]{};
\draw (2.5,1.5)node[fill=black]{};

\draw (0.5,0.5)node{};
\draw (1,0.5)node{};
\draw (2,0.5)node{};
\draw (2.5,0.5)node{};

\draw (0,0) -- (3,0) -- (3,2) -- (0,2) -- (0,0);
\end{tikzpicture}
}
\hspace*{0.5in}
\subfigure[brace~$\widehat{G-f}$]
{
\begin{tikzpicture}

\draw (0.5,1.5) -- (2,2.75);
\draw (0.5,1.5) to [out=180,in=90] (-0.75,0) to [out=270,in=180] (1.5,-1.5);

\draw (1.3,1.75)node[nodelabel]{$u'$};
\draw (0.25,1.7)node[nodelabel]{$u$};

\draw (0.5,1.5) -- (0.3,1.2);
\draw (0.5,1.5) -- (0.7,1.2);

\draw (1,1.5) -- (1,1.2);
\draw (1,1.5) -- (0.8,1.2);
\draw (1,1.5) -- (1.2,1.2);


\draw (1.5,-1.5) -- (1,-0.75);
\draw (1.5,-1.5) -- (2,-0.75);


\draw (2,2.75) -- (2,1.5);
\draw (2,2.75) -- (2.5,1.5);

\draw (1,-0.75) -- (0.5,0.5);
\draw (1,-0.75) -- (1,0.5);
\draw (2,-0.75) -- (2,0.5);
\draw (2,-0.75) -- (2.5,0.5);

\draw (2,2.75)node{};
\draw (2.35,2.75)node[nodelabel]{$a_2$};

\draw (1,-0.75)node[fill=black]{};
\draw (0.7,-0.75)node[nodelabel]{$b_1$};
\draw (2,-0.75)node[fill=black]{};
\draw (2.35,-0.75)node[nodelabel]{$b_2$};


\draw (1.5,-1.5)node{};
\draw (1.5,-1.85)node[nodelabel]{$a_0$};

\draw (0.5,1.5)node[fill=black]{};
\draw (1,1.5)node[fill=black]{};
\draw (2,1.5)node[fill=black]{};
\draw (2.5,1.5)node[fill=black]{};

\draw (0.5,0.5)node{};
\draw (1,0.5)node{};
\draw (2,0.5)node{};
\draw (2.5,0.5)node{};

\draw (0,0) -- (3,0) -- (3,2) -- (0,2) -- (0,0);
\end{tikzpicture}
}
\caption{Illustration for the proof of Lemma~\ref{lem-2:index-two}}
\label{fig-2:index-two}
\end{figure}
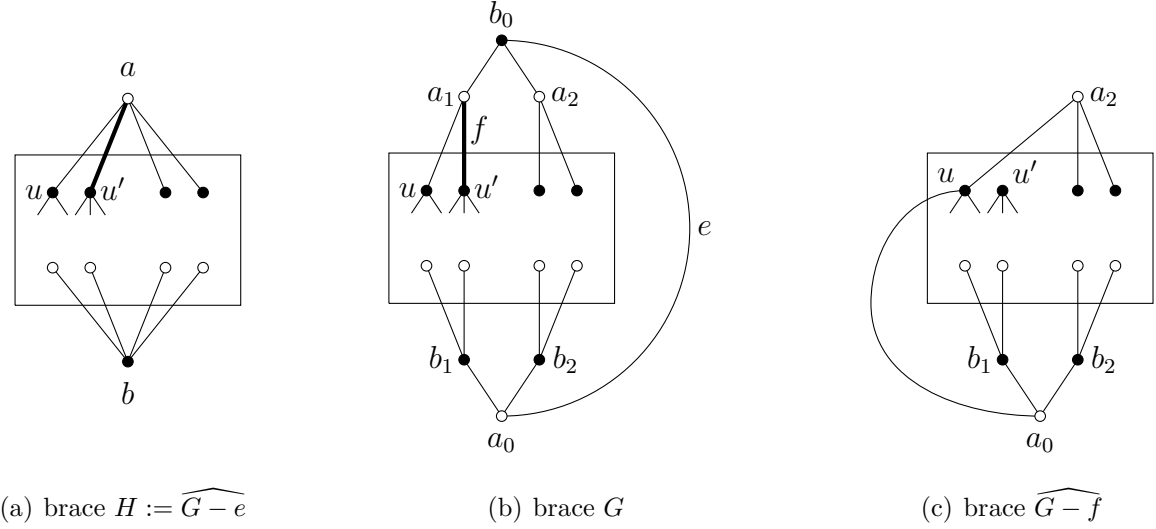

\smallskip
Now suppose that $f \notin \partial(\{b_1,b_2\})$,
and let $u'$ denote the noncubic end of~$f$.
Assume that $a_1b_1, a_1b_2 \notin E(G)$.
Let $u$ denote the neighbor of $a_1$ that is distinct from $b_0$ and $u'$.
Note that $u \notin \{b_1,b_2\}$.
See Figure~\ref{fig-2:index-two}.
Observe that the graph~$\widehat{G-f}$ may be obtained from the brace~$H-f$
by an expansion of index one;
in particular, $\widehat{G-f} := (H-f) \{b \rightarrow b_1a_0b_2 \} + a_0u$.
Thus $\widehat{G-f}$ is a brace
(by Proposition~\ref{prp:expansion-operations-preserve-brace-property}).
Consequently, $f$ is a strictly thin edge of index one,
contrary to our hypothesis that $G$ is devoid of such edges.
Thus, one of $a_1b_1$ and $a_1b_2$ is an edge of~$G$;
whence $f$ is adjacent with an edge that joins two outer vertices.
\end{proof}

\begin{cor}
\label{cor:index-two}
If $(e,F)$ is a minimality-preserving pair of~$G$, then~$G-e$ has two adjacent outer vertices $u$~and~$w$
such that $F \subset \partial(u) \cup \partial(w)$. Furthermore, if $F \cap \partial(u) \neq \emptyset$ then $u$
is cubic; likewise, if $F \cap \partial(w) \neq \emptyset$ then $w$ is cubic.
Consequently, $|F| \in \{1,2,3\}$. \qed
\end{cor}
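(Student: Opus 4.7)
The plan is to take $u$ and $w$ to be the two endpoints of the (necessarily unique) outer-outer edge of $G-e$. First, since $H$ is non-minimal, $F$ is nonempty, so pick any $f \in F$. Lemma~\ref{lem-2:index-two} applied to $f$ guarantees that $G-e$ contains at least one edge joining two outer vertices, and the remark preceding Lemma~\ref{lem-1:index-two} (that $e$ being strictly thin permits at most one edge between $\{a_1,a_2\}$ and $\{b_1,b_2\}$) forces uniqueness. Write this edge as $g = uw$ with $u \in \{a_1,a_2\}$ and $w \in \{b_1,b_2\}$; these will be the desired adjacent outer vertices.

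Next I would show every $f' \in F$ has an endpoint in $\{u,w\}$. If $f'$ joins two outer vertices then $f' = g$ by uniqueness, and Lemma~\ref{lem-1:index-two} forces both endpoints of $f'$ to be cubic. Otherwise Lemma~\ref{lem-2:index-two} yields an outer-outer edge adjacent in $G$ to $f'$, necessarily $g$; retracing the proof of that lemma, the cubic outer endpoint of $f'$ supplied by Lemma~\ref{lem-1:index-two} is precisely the endpoint of $g$ on the same color side as $f'$, so it lies in $\{u,w\}$, while the other endpoint of $f'$ is noncubic and non-outer. Hence $f' \in \partial(u) \cup \partial(w)$, and the uniqueness of $g$ then delivers $F \subseteq \partial(\{u,w\})$. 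The conditional cubicity is immediate: if $u$ is an endpoint of some $f' \in F$ then, since $u$ is outer, it must be the cubic outer endpoint of $f'$ supplied by Lemma~\ref{lem-1:index-two}, hence cubic; symmetrically for $w$.

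For the bound $|F| \leq 3$, note that when $u$ is cubic its three neighbors in $G$ are $b_0$, $w$, and one non-outer vertex $u'$. The expansion edge $ub_0$ is absent from $E(H)$, so $F \cap \partial(u)$ is contained in the two-edge set $\{uw, uu'\}$; a symmetric bound holds at $w$. Since $F \cap \partial(u) \cap \partial(w) \subseteq \{g\}$, inclusion–exclusion yields $|F| \leq 2+2-1 = 3$, and combined with $F \neq \emptyset$ this gives $|F| \in \{1,2,3\}$. The main obstacle is the bookkeeping between edges of $G$ and of $H$ (which differ by the five expansion edges $a_0b_0$, $a_0b_1$, $a_0b_2$, $a_1b_0$, $a_2b_0$) and carefully re-examining the case-B portion of the proof of Lemma~\ref{lem-2:index-two} in order to pin the cubic outer endpoint of such an edge to an endpoint of the unique outer-outer edge $g$ rather than to some other cubic outer vertex.
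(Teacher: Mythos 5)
Your proof is correct and follows the route the paper clearly intends (the corollary is stated with a \qed, i.e., as a direct consequence of Lemmas~\ref{lem-1:index-two} and~\ref{lem-2:index-two} together with the preliminary remark that $G$ has at most one edge between $\{a_1,a_2\}$ and $\{b_1,b_2\}$): take $u,w$ to be the ends of that unique outer--outer edge $g$, and route each $f'\in F$ through Lemma~\ref{lem-2:index-two} and then Lemma~\ref{lem-1:index-two}. Two small simplifications are available: you need not ``retrace'' the proof of Lemma~\ref{lem-2:index-two} --- if $f'$ is not outer--outer, then by Lemma~\ref{lem-1:index-two} its only outer end is the cubic one, and since $f'$ meets $g$ and both ends of $g$ are outer, the shared vertex is that cubic outer end, which lies in $\{u,w\}$; and for the cardinality bound, the cleaner statement is simply $F\subseteq\{uw,uu',ww'\}$ (the inclusion--exclusion phrasing ``$2+2-1$'' needs the extra observation that whenever $|F\cap\partial(u)|=|F\cap\partial(w)|=2$ the common element $uw$ is forced, so the subtraction of~$1$ is justified; the direct three-element containment avoids this).
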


\subsection{Main Theorem}
\label{sec:main-theorem}

We are now ready to state and prove the Main Theorem.

\begin{thm}
\label{thm:narrow-minimality-preserving-pair-in-minimal-braces}
{\sc [Main Theorem]}
Every minimal brace $G \notin \mathcal{G}$
has a minimality-preserving pair $(e,F)$ such that either $F = \emptyset$,
or otherwise $F$ satisfies the following properties in~$G-e$:
\begin{enumerate}[(i)]
\item If ${\rm index}(e)=1$ then $G-e$ has an outer vertex, say~$v$, such that $F \subset \partial(v)$.
\item If ${\rm index}(e)=2$ then $G-e$ has two adjacent outer vertices, say $u$~and~$w$,
such that $F \subset \partial(u) \cup \partial(w)$.
\item For each $f \in F$, an end of~$f$ is cubic if and only if it is an outer vertex.
\end{enumerate}
Consequently, $|F| \leq {\rm index}(e)+1$. Furthermore, if ${\rm index}(e)=1$ and $|F|=2$
then $G$ is isomorphic to a stable-extension of the minimal brace~$J:=\widehat{G-e}-F$.
\end{thm}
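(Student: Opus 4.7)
My plan is to invoke McCuaig's Theorem to obtain a strictly thin edge~$e$ of $G$, split into two cases according to $\mathrm{index}(e)$, and then apply the lemmas of Subsections~\ref{sec:index-one} and~\ref{sec:index-two} to identify a suitable~$F$. Since $G \notin \mathcal{G}$, Theorem~\ref{thm:strictly-thin-edge-in-simple-braces} produces a strictly thin edge; and because $G$ is minimal, no edge is superfluous, so every strictly thin edge has index one or two. I would first try to use a strictly thin edge of index one if one exists (Case~1); otherwise every strictly thin edge has index two (Case~2), matching the standing hypothesis of Subsection~\ref{sec:index-two}. In either case I set $H := \widehat{G - e}$ and iteratively delete superfluous edges until reaching a minimal brace $J := H - F$. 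If $H$ is already minimal then $F = \emptyset$ and all conclusions hold vacuously.

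In Case~1, Corollary~\ref{cor:index-one} yields a cubic outer vertex $v$ of $G-e$ with $F \subseteq \partial(v)$ and $|F| \in \{1,2\}$, giving~(i) and $|F| \leq \mathrm{index}(e)+1$. For~(iii), Lemma~\ref{lem-1:index-one} shows that each $f \in F$ has a cubic outer end (namely $v$) and a noncubic end; the noncubic end cannot be the other outer vertex because both outer vertices $a_1, a_2$ lie in the same color class as $v$, so it is a non-outer vertex. When $|F| = 2$, Lemma~\ref{lem-3:index-one} directly supplies the stable-extension conclusion.

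In Case~2, Corollary~\ref{cor:index-two} provides adjacent outer vertices $u, w$ with $F \subseteq \partial(\{u,w\})$; moreover, if $F \cap \partial(u) \neq \emptyset$ then $u$ is cubic, and likewise for~$w$. This gives~(ii) and $|F| \leq 3 = \mathrm{index}(e)+1$, and also the outer-end half of~(iii). To finish~(iii) I would invoke the observation stated in the opening paragraph of Subsection~\ref{sec:index-two}: since $e$ is strictly thin, $G$ contains at most one edge between $\{a_1,a_2\}$ and $\{b_1,b_2\}$. Hence $uw$ is the unique outer-outer edge, so for any $f \in F$ with $f \neq uw$ the end of $f$ lying outside $\{u,w\}$ is not an outer vertex, and Lemma~\ref{lem-1:index-two} then forces this end to be noncubic; while if $f = uw$, both ends of $f$ are cubic outer vertices.

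The main obstacle I anticipate is the verification of~(iii) in Case~2. The lemmas of Subsection~\ref{sec:index-two} only assert that each superfluous edge of~$H$ has one cubic outer end, with the other end either noncubic \emph{or} another cubic outer vertex, so additional work is required to rule out a noncubic outer end or a cubic non-outer end. This is handled by combining Lemma~\ref{lem-1:index-two} with the uniqueness of the outer-outer edge afforded by the strict thinness of~$e$, as sketched above. A minor bookkeeping point is confirming that the set $F$ obtained by greedy removal of superfluous edges from $H$ satisfies the hypotheses of the relevant corollaries, which follows immediately from the definition of a minimality-preserving pair.
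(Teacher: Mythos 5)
Your proof proposal is correct and follows essentially the same route as the paper's: invoke McCuaig's Theorem together with Corollary~\ref{cor:minimality-preserving-pair-in-minimal-braces}, note that minimality of~$G$ forbids index-zero strictly thin edges, prefer an index-one edge when one exists (so as to match the standing hypothesis of Subsection~\ref{sec:index-two} in the alternative case), and then read off the conclusions from Lemma~\ref{lem-1:index-one}, Corollary~\ref{cor:index-one}, Lemma~\ref{lem-3:index-one} in Case~1 and from Lemma~\ref{lem-1:index-two}, Corollary~\ref{cor:index-two} in Case~2. The one place where you add detail is your verification of~(iii) in Case~2: you rightly observe that the literal statement of Lemma~\ref{lem-1:index-two} only says the non-outer-end is ``noncubic or another cubic outer vertex,'' which by itself does not immediately rule out a noncubic outer end, and you close this gap via the strict-thinness observation that $\{a_1,a_2\}$ and $\{b_1,b_2\}$ span at most one edge. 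The paper simply cites the two lemmas, implicitly relying on the fact (established inside the proof of Lemma~\ref{lem-1:index-two}, though not quite captured by its statement) that any outer non-$v$ end is forced to be cubic; your explicit argument is a reasonable and correct way to spell this out.
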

\begin{proof}
By McCuaig's Theorem and its corollary (\ref{cor:minimality-preserving-pair-in-minimal-braces}),
$G$ has a strictly thin edge (of index one or two); furthermore, for any such edge~$e$,
there exists a minimality-preserving pair~$(e,F)$.

\smallskip
If $G$ has a strictly thin edge of index one, say~$e$, then we choose any minimality-preserving pair $(e,F)$,
and we are done by invoking Lemma~\ref{lem-1:index-one}, Corollary~\ref{cor:index-one}
and Lemma~\ref{lem-3:index-one}.

\smallskip
Now suppose that $G$ is devoid of strictly thin edges of index one.
Thus $G$ has a strictly thin edge of index two, say~$e$.
We choose any minimality-preserving pair $(e,F)$,
and we are done by invoking Lemma~\ref{lem-1:index-two} and Corollary~\ref{cor:index-two}.
\end{proof}

For convenience, let us introduce the following definition.

\begin{Def}
\label{Def:narrow-minimality-preserving-pair}
{\sc [Narrow Minimality-Preserving Pair]}
A minimality-preserving pair $(e,F)$ of a minimal brace~$G$ is {\it narrow}
if it satisfies the statement of the Main Theorem~(\ref{thm:narrow-minimality-preserving-pair-in-minimal-braces}).
\end{Def}

We may thus condense the statement of the
Main Theorem as follows:
every minimal brace~$G \notin \mathcal{G}$
has a narrow minimality-preserving pair.

\smallskip
Recall that we presented two equivalent versions of McCuaig's Theorem:
first a `reduction version' (\ref{thm:strictly-thin-edge-in-simple-braces}),
and second a `generation version' (\ref{thm:generating-simple-braces}).
In the same spirit, Theorem~\ref{thm:narrow-minimality-preserving-pair-in-minimal-braces}
(as stated) may be viewed as a `reduction version' for minimal braces.
The interested reader may obtain a `generation version'
(equivalent to Theorem~\ref{thm:narrow-minimality-preserving-pair-in-minimal-braces})
for minimal braces
by defining `extension operations' that could be potentially useful in obtaining a (larger) minimal brace
from a (smaller) minimal brace.
Each such extension operation may be viewed as a
sequence of index zero expansions (possibly none) followed by an index one or index two
expansion with some additional restrictions
(depending on the cardinality and structure of the set~$F$ and the index of the strictly thin edge~$e$).
This is in fact the viewpoint adopted by Norine and Thomas~\cite{noth06}
in their paper on minimal bricks.

\smallskip
We shall not define all of the aforementioned extension operations here (except for the one
we have already described: Definition~\ref{Def:stable-extension}). The reader may verify
that if a graph~$G$ is isomorphic to a stable-extension of a simple brace~$J$, then $G$ may be obtained
from~$J$ by first adding two (adjacent) edges and then performing
an index one expansion.
This observation, coupled with
Proposition~\ref{prp:expansion-operations-preserve-brace-property}, yields the following.

\begin{cor}
\label{cor:stable-extension-produces-simple-brace}
Any graph~$G$, that is isomorphic to a stable-extension of a simple brace~$J$,
is also a simple brace. \qed
\end{cor}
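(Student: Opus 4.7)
The plan is to justify the hint stated in the paragraph just preceding the corollary: every stable-extension $G$ of a simple brace $J$ can be obtained from $J$ by first adding two adjacent edges (which yields a larger simple brace by Corollary~\ref{cor:adding-an-edge}) and then performing an index one expansion (which preserves the property of being a simple brace by Proposition~\ref{prp:expansion-operations-preserve-brace-property}). Once such a two-step decomposition is exhibited, the corollary is immediate.

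To realize the decomposition, I would let $S = \{a_1, a_2, b_1, b_2\}$ be the stable set used to define $G$, and set $J^+ := J + a_1 b_1 + a_1 b_2$. Since $S$ is stable in $J$, neither $a_1 b_1$ nor $a_1 b_2$ lies in $E(J)$; the two new edges share the endpoint $a_1$ and so are adjacent. Hence $J^+$ is a simple bipartite graph obtained from $J$ by adding two adjacent edges, and Corollary~\ref{cor:adding-an-edge} gives that $J^+$ is a simple brace. Next, I would apply an index one expansion to $J^+$: the vertex $a_1$ has $\deg_{J^+}(a_1) \geq 3 + 2 = 5$ (using that $J$, being a brace of order at least six, is $3$-connected), so $a_1$ is noncubic in $J^+$. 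Bi-split $a_1$ as $J^+\{a_1 \to a_1\, b_0\, a_0\}$, assigning the original neighbors $N_J(a_1)$ to the piece still named $a_1$ and assigning $\{b_1, b_2\}$ to the new piece $a_0$, so that each piece receives at least two neighbors; then add the edge $b_0 a_2$, where $a_2 \in A$ lies in the same color class as $a_1$. Proposition~\ref{prp:expansion-operations-preserve-brace-property} then yields that the resulting graph is a simple brace.

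The main (and only genuine) obstacle is the bookkeeping check that the graph produced by this two-step construction has edge set exactly $E(J) \cup \{a_0 b_1,\, a_0 b_2,\, b_0 a_1,\, b_0 a_2,\, a_0 b_0\}$, which matches the definition of the stable-extension of $J$ with respect to $S$. This is routine: the two edges $a_1 b_1, a_1 b_2$ added to form $J^+$ become $a_0 b_1, a_0 b_2$ once they are inherited by the new piece $a_0$ during the bi-splitting; the bi-splitting itself supplies the edges $b_0 a_1$ and $b_0 a_0$; and the final step of the expansion supplies $b_0 a_2$. Assembling these facts along the chain $J \rightsquigarrow J^+ \rightsquigarrow G$ completes the proof.
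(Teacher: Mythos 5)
Your proof is correct and realizes exactly the two-step decomposition (add two adjacent edges, then apply an index-one expansion) that the paper states ``the reader may verify''; the bookkeeping of the resulting edge set, and the identification of $a_0$ as the bi-split piece and $b_0$ as the middle vertex, are all sound. The only tacit step is that $J$ has order at least six --- needed to invoke $3$-connectedness so that the piece retaining $N_J(a_1)$ receives at least two edges --- but this follows immediately: a brace of order four is $C_4$, which has no stable set meeting both color classes in two vertices, so a stable-extension of it would not be defined.
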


\section{An application}
\label{sec:application}

In this section, as an application of the
Main Theorem~(\ref{thm:narrow-minimality-preserving-pair-in-minimal-braces}),
we will prove that if $G$ is a minimal brace (where $n_G \geq 6$) then
$m_G \leq 5 \cdot n_G - 10$;
furthermore, we shall provide a complete characterization of minimal braces that
meet this upper bound.

\smallskip
We begin by defining an infinite family~$\mathcal{Q}$ of minimal braces,
each of whose members meets this upper bound;
its smallest member is $Q_{12}$.
For $n \geq 7$, we define the graph~$Q_{2n}$ as the $S$-extension of $Q_{2n-2}$ ---
where the set~$S$ comprises the noncubic vertices of $Q_{2n-2}$.
Now let $\mathcal{Q}:=\{Q_{2n} : n \geq 6\}$.
Since~$Q_{12}$ is a simple brace, it follows from
Corollary~\ref{cor:stable-extension-produces-simple-brace}
that each member of~$\mathcal{Q}$ is a simple brace.
Furthermore, by Proposition~\ref{prp:brace-noncubic-stable-implies-minimal},
we infer that each member of $\mathcal{Q}$ is in fact a minimal brace.
By summing degrees, observe that if $G \in \mathcal{Q}$ then $m_G = 5 \cdot n_G - 10$.

\smallskip
We now prove the following easy corollary of the Main Theorem.
\begin{cor}
\label{cor:edge-bound-preservation}
Let $(e,F)$ denote a narrow minimality-preserving pair of a minimal brace~$G$,
and let $J:=\widehat{G-e}-F$.
Assume that $m_J \leq 5 \cdot n_J - 10$. Then the following hold:
\begin{enumerate}[(i)]
\item $m_G \leq 5 \cdot n_G-10$.
\item If $m_G = 5 \cdot n_G-10$ then $m_J = 5 \cdot n_J - 10$, ${\rm index}(e)=1$ and $|F|=2$;
consequently, $G$ is isomorphic to a stable-extension of~$J$.
\end{enumerate}
\end{cor}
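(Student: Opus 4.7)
The proof will be essentially an arithmetic calculation, using Proposition~\ref{prp:compare-order-size-of-G-and-J} to translate the hypothesis about $J$ into a statement about $G$, and using the quantitative bound $|F| \le {\rm index}(e)+1$ furnished by the Main Theorem~(\ref{thm:narrow-minimality-preserving-pair-in-minimal-braces}). First, I would observe that since $G$ is a minimal brace it has no superfluous edges, so ${\rm index}(e) \in \{1,2\}$ for any strictly thin edge of~$G$; in particular this holds for the edge~$e$ of the narrow minimality-preserving pair $(e,F)$.

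Next, by Proposition~\ref{prp:compare-order-size-of-G-and-J},
\[
n_G = n_J + {\rm index}(e) \qquad\text{and}\qquad m_G = m_J + 1 + 2\cdot{\rm index}(e) + |F|.
\]
Substituting the hypothesis $m_J \le 5n_J-10$ and the Main Theorem bound $|F| \le {\rm index}(e)+1$, I would compute
\[
m_G \le (5n_J-10) + 1 + 2\cdot{\rm index}(e) + ({\rm index}(e)+1) = 5n_G - 8 - 2\cdot{\rm index}(e).
\]
Since ${\rm index}(e) \ge 1$, this yields $m_G \le 5n_G-10$, proving~(i).

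For~(ii), suppose $m_G = 5n_G-10$. Then both inequalities in the chain above must hold with equality: $m_J = 5n_J-10$ and $|F| = {\rm index}(e)+1$, and moreover the final step $-8 - 2\cdot{\rm index}(e) = -10$ forces ${\rm index}(e)=1$ and consequently $|F|=2$. The last clause of the Main Theorem states precisely that, in this situation, $G$ is isomorphic to a stable-extension of the minimal brace~$J$, which completes the proof.

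There is no real obstacle here; the statement is a direct numerical consequence of Proposition~\ref{prp:compare-order-size-of-G-and-J} and the two quantitative outputs of the Main Theorem (the bound on $|F|$, and the structural conclusion in the extremal sub-case ${\rm index}(e)=1$, $|F|=2$). The only subtlety worth flagging is why ${\rm index}(e) \neq 0$, which is immediate from the minimality of~$G$.
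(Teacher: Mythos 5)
Your proof is correct and follows essentially the same route as the paper's: both use Proposition~\ref{prp:compare-order-size-of-G-and-J} together with the bound $|F|\le{\rm index}(e)+1$ from the Main Theorem, and both then extract ${\rm index}(e)=1$ and $|F|=2$ from the equality case before invoking the Main Theorem's final clause. The only cosmetic difference is that you collapse the two cases into a single parameterized bound $m_G\le 5n_G-8-2\cdot{\rm index}(e)$, whereas the paper handles ${\rm index}(e)=1$ and ${\rm index}(e)=2$ separately; the content is the same.
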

\begin{proof}
By Proposition~\ref{prp:compare-order-size-of-G-and-J},
$n_J= n_G- {\rm index}(e)$ and $m_G = m_J + 1 + 2 \cdot {\rm index}(e) + |F|$.
By Theorem~\ref{thm:narrow-minimality-preserving-pair-in-minimal-braces}, $|F| \leq {\rm index}(e) + 1$.
We now consider two cases depending on the index of $e$.

\smallskip
First consider the case: ${\rm index}(e)=2$. Using the equations and inequalities noted above, we have:
$m_G = m_J + 5 + |F| < (5 \cdot n_J - 10) + 5 + 5 = 5 \cdot n_J = 5 \cdot n_G - 10.$
Thus, in this case, the strict inequality $m_G < 5 \cdot n_G - 10$ holds.

\smallskip
Now consider the case: ${\rm index}(e)=1$. Using the same equations and inequalities as before, we have:
$m_G = m_J + 3 + |F| \leq (5 \cdot n_J - 10) + 3 + 2 = 5 \cdot n_J - 5 = 5 \cdot n_G - 10$.
Hence, we have the inequality $m_G \leq 5 \cdot n_G - 10$; equality holds
if and only if $m_J = 5 \cdot n_J -10$ and $|F|=2$.
By the last part of Theorem~\ref{thm:narrow-minimality-preserving-pair-in-minimal-braces},
we infer that $G$
is isomorphic to a stable-extension of~$J$.
This completes the proof of Corollary~\ref{cor:edge-bound-preservation}.
\end{proof}

We are now ready to prove the main result of this section.

\begin{thm}
\label{thm:extremal-minimal-braces}
Let $G$ denote a minimal brace that is not in $\{K_2, C_4, K_{3,3}, B_8, B_{10}, Q_{10}^+\}$.
Then $m_G \leq 5 \cdot n_G - 10$, and
equality holds if and only if $G \in \{M_{10}, B_{12}\} \cup \mathcal{Q}$.
\end{thm}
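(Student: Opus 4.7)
The plan is strong induction on $n_G$, using the Main Theorem~(\ref{thm:narrow-minimality-preserving-pair-in-minimal-braces}) and Corollary~\ref{cor:edge-bound-preservation} as the reduction step.

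Base cases $n_G\in\{5,6\}$: Proposition~\ref{prp:minimal-braces-order-ten} lists $M_{10}, B_{10}, Q_{10}^+$ as the only minimal braces of order ten; removing the two excluded ones leaves $M_{10}$, for which $m=15=5\cdot 5-10$. Proposition~\ref{prp:minimal-braces-order-twelve-size-twenty} implies that every minimal brace of order twelve has $m\le 20=5\cdot 6-10$, with equality exactly for $B_{12}$ and $Q_{12}$, settling both base cases.

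Inductive step, $n_G\ge 7$: First, if $G\in\mathcal{G}$ then $G$ is a prism, a M{\"o}bius ladder, or a biwheel; the first two are cubic, giving $m_G=3n_G$, while biwheels give $m_G=4n_G-4$. An arithmetic check in either case yields $m_G<5n_G-10$ for $n_G\ge 7$, and $G$ is not in the extremal family. Otherwise $G\notin\mathcal{G}$, and the Main Theorem delivers a narrow minimality-preserving pair $(e,F)$. Setting $J:=\widehat{G-e}-F$, I have $n_J=n_G-{\rm index}(e)\in\{n_G-1,n_G-2\}$ and $|F|\le {\rm index}(e)+1$. If $J\notin\{K_2,C_4,K_{3,3},B_8,B_{10},Q_{10}^+\}$, then $m_J\le 5n_J-10$ by the inductive hypothesis, and Corollary~\ref{cor:edge-bound-preservation}(i) yields $m_G\le 5n_G-10$. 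If instead $J$ lies in the excluded set, then $n_J\le 5$, forcing $n_J=5$, ${\rm index}(e)=2$, and $J\in\{B_{10},Q_{10}^+\}$ (so $m_J=16$); then $m_G=m_J+5+|F|\le 16+5+3=24<25=5n_G-10$.

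For the equality characterization, the ``if'' direction is a routine degree count: $m_{M_{10}}=15$, $m_{B_{12}}=20$, and induction on the construction of $\mathcal{Q}$ (each stable-extension adds two vertices and five edges) gives $m_{Q_{2n}}=5n-10$. For the ``only if'' direction, suppose $m_G=5n_G-10$. The base cases yield $G\in\{M_{10},B_{12},Q_{12}\}$. In the inductive step, equality is incompatible with the case $G\in\mathcal{G}$ and with the sub-case $J$ in the excluded set (both of which gave strict inequality above), so Corollary~\ref{cor:edge-bound-preservation}(ii) applies and forces $m_J=5n_J-10$, ${\rm index}(e)=1$, $|F|=2$, with $G$ isomorphic to a stable-extension of~$J$. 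By induction $J\in\{M_{10},B_{12}\}\cup\mathcal{Q}$; I then analyze, for each such $J$, which stable sets $S$ (meeting each color class in two vertices) yield a minimal brace of the required edge count. For $J=M_{10}$, $G$ has order twelve and meets equality, hence $G\in\{B_{12},Q_{12}\}$ by Proposition~\ref{prp:minimal-braces-order-twelve-size-twenty}. For $J=B_{12}$ or $J=Q_{2k}$ with $k\ge 6$, I claim that $S$ must be precisely the set of noncubic vertices of~$J$, in which case the identification of $G$ with $Q_{n_J+2}$ is immediate from the definition of~$\mathcal{Q}$.

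The main obstacle is this last structural step in the equality analysis. Specifically, for $J\in\{B_{12}\}\cup\mathcal{Q}$, one must rule out stable sets $S$ that contain any cubic vertex of $J$, by exhibiting in the resulting $S$-extension a superfluous edge --- typically an edge joining a noncubic vertex of $J$ not placed in~$S$ to one of its neighbors, using the non-superfluous-edge criterion of Corollary~\ref{cor:non-superfluous-edges-in-braces} to produce the required partition $(A_1,A_2), (B_1,B_2)$. Once that canonical $S$ is forced, the identification $G\simeq Q_{n_J+2}$ follows directly from the definition of $\mathcal{Q}$.
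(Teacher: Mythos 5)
Your overall strategy coincides with the paper's: induction on order, base cases from Propositions~\ref{prp:minimal-braces-order-ten} and~\ref{prp:minimal-braces-order-twelve-size-twenty}, disposal of $\mathcal{G}$ by arithmetic, and then the Main Theorem plus Corollary~\ref{cor:edge-bound-preservation} for the reduction. The bound $m_G\le 5n_G-10$ and the passage to ``$m_J=5n_J-10$, ${\rm index}(e)=1$, $|F|=2$, $G$ a stable-extension of $J$'' are handled just as in the paper.

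However, your final structural claim contains a genuine error. You assert that for $J=B_{12}$ the stable set $S$ must be ``precisely the set of noncubic vertices of $J$,'' and that the identification $G\simeq Q_{n_J+2}$ then follows. But $B_{12}$ has exactly \emph{two} noncubic vertices (the two hubs, each of degree~$5$), so the set of noncubic vertices does not meet each color class in two vertices and cannot serve as $S$. Worse, neither hub can belong to \emph{any} admissible $S$: each hub is adjacent to all five rim vertices of the opposite color class, so placing a hub in $S$ leaves at most one nonadjacent vertex in that class. Hence every admissible $S$ in $B_{12}$ consists entirely of cubic rim vertices, and your heuristic ``rule out $S$ containing a cubic vertex'' cannot produce a canonical $S$ here --- the correct conclusion is that $J\ne B_{12}$, which the paper obtains by exhibiting a superfluous edge in the (essentially unique) $S$-extension of $B_{12}$ via Proposition~\ref{prp:brace-characterization}. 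Your conclusion ``$G\simeq Q_{n_J+2}$'' is not even well-posed when $J=B_{12}$, since $Q_{2n}$ is defined only by extension from members of $\mathcal{Q}$. Similarly, the line treating $J=M_{10}$ is off: in the inductive step you have $n_G\ge 7$ and ${\rm index}(e)=1$ in the equality case, so $n_J\ge 6$ and $J=M_{10}$ (with $n_J=5$) simply cannot occur; the paper instead rules it out by noting $M_{10}$ has no stable set meeting each color class in two vertices, which is cleaner but either observation suffices. For $J\in\mathcal{Q}$ your instinct is right --- one splits on whether $S$ meets the set $T$ of noncubic vertices, shows $S\cap T=\emptyset$ forces a superfluous edge, and then shows $S\cap T\ne\emptyset$ implies $S=T$ --- but the $B_{12}$ branch as you stated it would not close.
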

\begin{proof}
We proceed by induction on the order of the graph.
Let $G[A,B]$ denote a minimal brace that is not in $\{K_2,C_4, K_{3,3}, B_8, B_{10}, Q_{10}^+\}$.

\smallskip
The reader may verify, using
Propositions \ref{prp:minimal-braces-order-at-most-eight}~and~\ref{prp:minimal-braces-order-ten},
that the desired conclusion holds when $n_G \leq 5$.
Now suppose that $n_G \geq 6$.
If $G$ is cubic then $m_G = 3 \cdot n_G < 5 \cdot n_G -10$.
We may thus assume that $G$ is noncubic.
Now one may verify, using Proposition~\ref{prp:minimal-braces-order-twelve-size-twenty},
that the desired conclusion holds when $n_G = 6$.
Henceforth, suppose that $n_G \geq 7$.
If $G$ is a biwheel then $m_G = 4 \cdot n_G - 4 < 5 \cdot n_G-10$.
We may thus assume that $G$ is not a biwheel.
Consequently, $G \notin \mathcal{G}$.

\smallskip
By the Main Theorem~(\ref{thm:narrow-minimality-preserving-pair-in-minimal-braces}),
$G$ has a narrow minimality-preserving pair, say $(e,F)$.
Let $J$ denote the minimal brace~$\widehat{G-e}-F$.
Note that $n_J = n_G - {\rm index}(e)$ where ${\rm index}(e) \in \{1,2\}$;
whence $n_J \geq 5$.
By invoking the induction hypothesis, either $J \in \{B_{10}, Q_{10}^+\}$,
or otherwise $m_J \leq 5 \cdot n_J-10$ and
equality holds if and only if $J \in \{M_{10}, B_{12}\} \cup \mathcal{Q}$.

\smallskip
First suppose that $J \in \{B_{10}, Q_{10}^+\}$.
In particular, $n_J=5$ and $m_J=16$. Consequently, ${\rm index}(e)=2$ and $n_G = 7$.
By Proposition~\ref{prp:compare-order-size-of-G-and-J},
$m_G = m_J + 1 + 2 \cdot {\rm index}(e) + |F|$
where $|F| \leq 3$.
It follows that $m_G \leq 24 < 25 = 5 \cdot n_G - 10$.
The desired conclusion holds.

\smallskip
Now suppose that $m_J \leq 5 \cdot n_J - 10$ and
equality holds if and only if $J \in \{M_{10}, B_{12}\} \cup \mathcal{Q}$.
By invoking Corollary~\ref{cor:edge-bound-preservation},
we infer that $m_G \leq 5 \cdot n_G - 10$.
Since $n_G \geq 7$,
it only remains to prove the following: if $m_G = 5 \cdot n_G - 10$
then $G \in \mathcal{Q}$.

\smallskip
Henceforth, assume that $m_G = 5 \cdot n_G - 10$.
It now follows from Corollary~\ref{cor:edge-bound-preservation}
that $m_J = 5 \cdot n_J - 10$. Thus $J \in \{M_{10},B_{12} \} \cup \mathcal{Q}$.
Furthermore, $J$ has a stable set $S$ that
meets each color class in precisely two vertices, and $G$ is isomorphic to the
$S$-extension of~$J$.
We let $A'$ and $B'$ denote the color classes of $J$, and we let $a_0$ and $b_0$
denote the extension vertices of~$G$. Thus $A=A' \cup \{a_0\}$ and $B=B' \cup \{b_0\}$.

\smallskip
One may easily verify that $M_{10}$, shown
in Figure~\ref{fig:M_10},
has no stable set that meets each color class in two vertices.
Thus, either $J$ is isomorphic to $B_{12}$
or otherwise~$J \in \mathcal{Q}$.

\begin{figure}[!htb]
\centering
\subfigure[$B_{12}$]
{
\begin{tikzpicture}[scale=1]

\draw (180:4) to [out=90,in=180] (90:3) to [out=0,in=90] (36:2);

\draw (180:4) to [out=270,in=180] (270:3) to [out=0,in=270] (324:2);

\draw (180:4) -- (180:2);
\draw (180:4) to [out=60,in=180] (108:2);
\draw (180:4) to [out=300,in=180] (252:2);

\draw (0:0) -- (0:2);
\draw (0:0) -- (72:2);
\draw (0:0) -- (144:2);
\draw (0:0) -- (216:2);
\draw (0:0) -- (288:2);

\draw (0,0) circle(2);

\draw (0:2)node[fill=black]{};
\draw (72:2)node[fill=black]{};
\draw (144:2)node[fill=black]{}node[left,nodelabel]{$b_1$};
\draw (216:2)node[fill=black]{}node[left,nodelabel]{$b_2$};
\draw (288:2)node[fill=black]{};
\draw (180:4)node[fill=black]{};
\draw (180:4.2)node[nodelabel]{$b$};

\draw (36:2)node{}node[right,nodelabel]{$a_1$};
\draw (108:2)node{};
\draw (180:2)node{};
\draw (252:2)node{};
\draw (324:2)node{}node[right,nodelabel]{$a_2$};
\draw (0:0)node{};
\draw (72:-0.3)node[nodelabel]{$a$};

\end{tikzpicture}
\label{fig:B_12}
}
\hspace*{0.2in}
\subfigure[$S$-extension of $B_{12}$ where \mbox{$S:=\{a_1,a_2,b_1,b_2\}$}]
{
\begin{tikzpicture}[scale=1]

\draw[thick] (180:1.1) to [out=30,in=180] (30:1);

\draw[thick] (36:2) -- (30:1) -- (324:2);
\draw[thick] (216:2) -- (180:1.1) -- (144:2);

\draw (180:4) to [out=90,in=180] (90:3) to [out=0,in=90] (36:2);

\draw (180:4) to [out=270,in=180] (270:3) to [out=0,in=270] (324:2);

\draw (180:4) -- (180:2);
\draw (180:4) to [out=60,in=180] (108:2);
\draw (180:4) to [out=300,in=180] (252:2);

\draw (0:0) -- (0:2);
\draw (0:0) -- (72:2);
\draw (0:0) -- (144:2);
\draw (0:0) -- (216:2);
\draw (0:0) -- (288:2);

\draw (0,0) circle(2);

\draw (0:2)node[fill=black]{};
\draw (72:2)node[fill=black]{};
\draw (144:2)node[fill=black]{}node[left,nodelabel]{$b_1$};
\draw (216:2)node[fill=black]{}node[left,nodelabel]{$b_2$};
\draw (288:2)node[fill=black]{};
\draw (180:4)node[fill=black]{};
\draw (180:4.2)node[nodelabel]{$b$};

\draw (36:2)node{}node[right,nodelabel]{$a_1$};
\draw (108:2)node{};
\draw (180:2)node{};
\draw (252:2)node{};
\draw (324:2)node{}node[right,nodelabel]{$a_2$};
\draw (0:0)node{};
\draw (72:-0.3)node[nodelabel]{$a$};

\draw (180:1.1)node{};
\draw (180:1.4)node[nodelabel]{$a_0$};
\draw (30:1)node[fill=black]{};
\draw (20:1.4)node[nodelabel]{$b_0$};

\draw (90:2.7)node[nodelabel]{$f$};

\end{tikzpicture}
\label{fig:S-ext-of-B_12}
}
\caption{The $S$-extension of $B_{12}$ is not a minimal brace}
\end{figure}
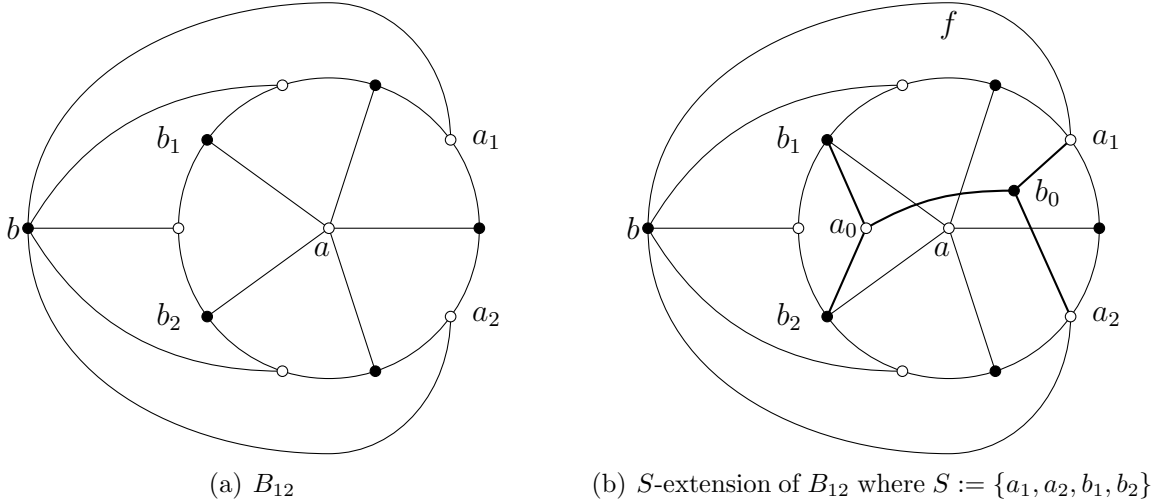

\smallskip
First consider the case in which $J$ is isomorphic to $B_{12}$.
We let $a \in A'$ and $b \in B'$ denote the noncubic vertices of~$J$.
The reader may easily verify that, up to symmetry, $B_{12}$ has only one
stable set~$\{a_1,b_1,a_2,b_2\}$
that meets each color class in precisely two vertices --- as shown in Figure~\ref{fig:B_12}.
Thus, we let $S:=\{a_1,b_1,a_2,b_2\}$;
whence $G$ is isomorphic to the graph shown in Figure~\ref{fig:S-ext-of-B_12}.
We shall now arrive at a contradiction by showing that $G-f$ is a brace --- where $f:=a_1b$.
By Proposition~\ref{prp:brace-characterization}, for each nonempty set~$Z \subset A$
such that $|Z| < |A|-1$, the inequality $|N_G(Z)| \geq |Z|+2$ holds;
furthermore, it suffices to verify that $|N_{G-f}(Z)| \geq |Z|+2$ holds;
by comparing the graphs $G$ and $G-f$, observe that we only
need to check those sets $Z$ that satisfy the following: $\{a_1\} \subseteq Z \subseteq \{a,a_0,a_1\}$.
The reader may verify that, for each set~$Z$ that satisfies $\{a_1\} \subseteq Z \subseteq \{a,a_0,a_1\}$,
the inequality $|N_{G-f}(Z)| \geq |Z|+2$ holds. Thus $f$ is a superfluous edge in~$G$;
contradiction. (In fact, by symmetry, any edge of~$G$, both of whose ends are noncubic, is superfluous.)

\smallskip
Now consider the case in which $J \in \mathcal{Q}$.
We let $A':=\{a_1,a_2,w_1,w_2,\dots,w_{n_J-2}\}$
and $B':=\{b_1,b_2,u_1,u_2,\dots, u_{n_J-2} \}$
--- such that $T:=\{a_1,a_2,b_1,b_2\}$ is the set of noncubic vertices of~$J$,
and $w_iu_i \in E(J)$ for each $i \in \{1, 2, \dots, n_J-2\}$ ---
as shown in Figure~\ref{fig:Q_14}.
We consider two subcases depending on whether the set~$T$ intersects with
the stable set~$S$ or not.

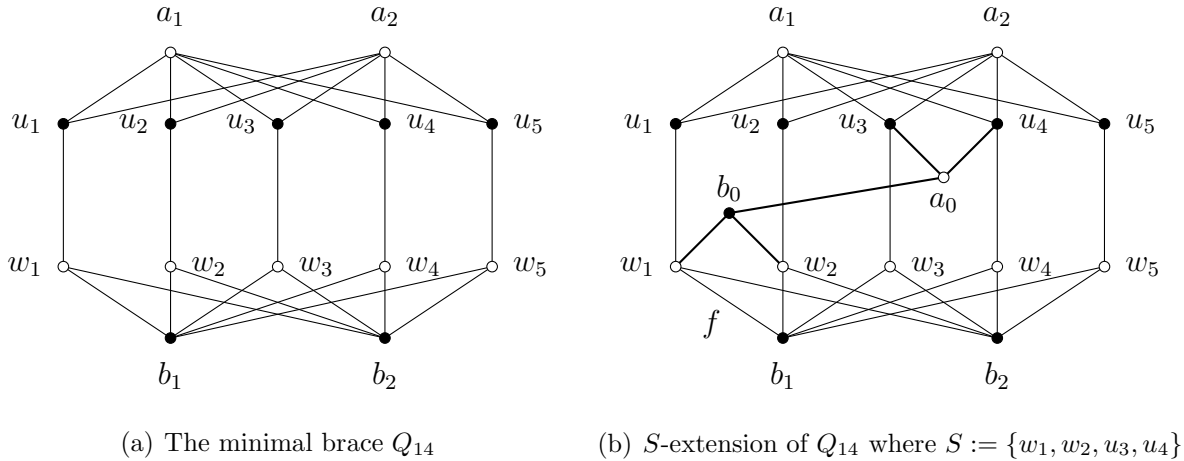
\begin{figure}[!htb]
\centering
\subfigure[The minimal brace~$Q_{14}$]
{
\begin{tikzpicture}[scale=0.95]

\draw (4.5,-1) -- (0,0);
\draw (4.5,-1) -- (1.5,0);
\draw (4.5,-1) -- (3,0);
\draw (4.5,-1) -- (4.5,0);
\draw (4.5,-1) -- (6,0);

\draw (1.5,-1) -- (0,0);
\draw (1.5,-1) -- (1.5,0);
\draw (1.5,-1) -- (3,0);
\draw (1.5,-1) -- (4.5,0);
\draw (1.5,-1) -- (6,0);

\draw (4.5,3) -- (0,2);
\draw (4.5,3) -- (1.5,2);
\draw (4.5,3) -- (3,2);
\draw (4.5,3) -- (4.5,2);
\draw (4.5,3) -- (6,2);

\draw (1.5,3) -- (0,2);
\draw (1.5,3) -- (1.5,2);
\draw (1.5,3) -- (3,2);
\draw (1.5,3) -- (4.5,2);
\draw (1.5,3) -- (6,2);

\draw (0,0) -- (0,2);
\draw (1.5,0) -- (1.5,2);
\draw (3,0) -- (3,2);
\draw (4.5,0) -- (4.5,2);
\draw (6,0) -- (6,2);

\draw (1.5,3)node{}node[above,nodelabel]{$a_1$};
\draw (4.5,3)node{}node[above,nodelabel]{$a_2$};
\draw (0,0)node{}node[left,nodelabel]{$w_1$};
\draw (1.5,0)node{}node[right,nodelabel]{$w_2$};
\draw (3,0)node{}node[right,nodelabel]{$w_3$};
\draw (4.5,0)node{}node[right,nodelabel]{$w_4$};
\draw (6,0)node{}node[right,nodelabel]{$w_5$};

\draw (1.5,-1)node[fill=black]{}node[below,nodelabel]{$b_1$};
\draw (4.5,-1)node[fill=black]{}node[below,nodelabel]{$b_2$};
\draw (0,2)node[fill=black]{}node[left,nodelabel]{$u_1$};
\draw (1.5,2)node[fill=black]{}node[left,nodelabel]{$u_2$};
\draw (3,2)node[fill=black]{}node[left,nodelabel]{$u_3$};
\draw (4.5,2)node[fill=black]{}node[right,nodelabel]{$u_4$};
\draw (6,2)node[fill=black]{}node[right,nodelabel]{$u_5$};

\end{tikzpicture}
\label{fig:Q_14}
}
\subfigure[$S$-extension of $Q_{14}$ where $S:=\{w_1,w_2,u_3,u_4\}$]
{
\begin{tikzpicture}[scale=0.95]

\draw[thick] (0.75,0.75) -- (3.75,1.25);

\draw[thick] (0,0) -- (0.75,0.75) -- (1.5,0);
\draw[thick] (3,2) -- (3.75,1.25) -- (4.5,2);

\draw (4.5,-1) -- (0,0);
\draw (4.5,-1) -- (1.5,0);
\draw (4.5,-1) -- (3,0);
\draw (4.5,-1) -- (4.5,0);
\draw (4.5,-1) -- (6,0);

\draw (1.5,-1) -- (0,0);
\draw (1.5,-1) -- (1.5,0);
\draw (1.5,-1) -- (3,0);
\draw (1.5,-1) -- (4.5,0);
\draw (1.5,-1) -- (6,0);

\draw (4.5,3) -- (0,2);
\draw (4.5,3) -- (1.5,2);
\draw (4.5,3) -- (3,2);
\draw (4.5,3) -- (4.5,2);
\draw (4.5,3) -- (6,2);

\draw (1.5,3) -- (0,2);
\draw (1.5,3) -- (1.5,2);
\draw (1.5,3) -- (3,2);
\draw (1.5,3) -- (4.5,2);
\draw (1.5,3) -- (6,2);

\draw (0,0) -- (0,2);
\draw (1.5,0) -- (1.5,2);
\draw (3,0) -- (3,2);
\draw (4.5,0) -- (4.5,2);
\draw (6,0) -- (6,2);

\draw (1.5,3)node{}node[above,nodelabel]{$a_1$};
\draw (4.5,3)node{}node[above,nodelabel]{$a_2$};
\draw (0,0)node{}node[left,nodelabel]{$w_1$};
\draw (1.5,0)node{}node[right,nodelabel]{$w_2$};
\draw (3,0)node{}node[right,nodelabel]{$w_3$};
\draw (4.5,0)node{}node[right,nodelabel]{$w_4$};
\draw (6,0)node{}node[right,nodelabel]{$w_5$};

\draw (1.5,-1)node[fill=black]{}node[below,nodelabel]{$b_1$};
\draw (4.5,-1)node[fill=black]{}node[below,nodelabel]{$b_2$};
\draw (0,2)node[fill=black]{}node[left,nodelabel]{$u_1$};
\draw (1.5,2)node[fill=black]{}node[left,nodelabel]{$u_2$};
\draw (3,2)node[fill=black]{}node[left,nodelabel]{$u_3$};
\draw (4.5,2)node[fill=black]{}node[right,nodelabel]{$u_4$};
\draw (6,2)node[fill=black]{}node[right,nodelabel]{$u_5$};

\draw (0.75,0.75)node[fill=black]{};
\draw (0.75,1.1)node[nodelabel]{$b_0$};
\draw (3.75,1.25)node{};
\draw (3.75,0.9)node[nodelabel]{$a_0$};

\draw (0.5,-0.8)node[nodelabel]{$f$};

\end{tikzpicture}
\label{fig:S-extension-of-Q_14}
}
\caption{An $S$-extension of $Q_{14}$ that is not a minimal brace}
\end{figure}

\smallskip
First suppose that $S \cap T = \emptyset$. Thus, we may adjust notation so that
$S:= \{w_1,w_2,u_3,u_4\}$; see Figure~\ref{fig:S-extension-of-Q_14}.
Similarly to an earlier case, we will arrive at a contradiction by
showing that $G-f$ is a brace --- where $f:=b_1w_1$.
By Proposition~\ref{prp:brace-characterization}, for each nonempty set~$Z \subset A$
such that $|Z| < |A|-1$, the inequality $|N_G(Z)| \geq |Z|+2$ holds;
furthermore, it suffices to verify that $|N_{G-f}(Z)| \geq |Z|+2$ holds;
by comparing the graphs $G$ and $G-f$, observe that we only need to check
those sets $Z$ that satisfy the following: $\{w_1\} \subseteq Z \subseteq \{w_1,a_0,a_1,a_2\}$.
The reader may verify that, for each set $Z$ that satisfies
$\{w_1\} \subseteq Z \subseteq \{w_1,a_0,a_1,a_2\}$, the
inequality $|N_{G-f}(Z)| \geq |Z|+2$ holds. Thus $f$ is a superfluous edge in~$G$;
contradiction. (In fact, by symmetry, any edge of~$G$, both of whose ends are noncubic,
is superfluous.)

\smallskip
Now suppose that $S \cap T \neq \emptyset$. Observe that, in this case, $S = T$.
Consequently, $G \in \mathcal{Q}$, by definition of the family~$\mathcal{Q}$.
This completes the proof of Theorem~\ref{thm:extremal-minimal-braces}.
\end{proof}

\bigskip
\noindent
{\bf Acknowledgements:}
The first and third authors received support from Fundect-MS and from CNPq (Brazil).
The second author received support from
FAPESP (2018/04679-1) of Sao Paulo (Brazil) and
from the Austrian Science Foundation FWF (START grant Y463).
The authors are especially grateful to one of the anonymous referees who
helped in improving the presentation.

\bibliographystyle{plain}
\bibliography{clm}

\begin{thebibliography}{10}

\bibitem{bomu08}
J.~A. Bondy and U.~S.~R. Murty.
\newblock {\em Graph Theory}.
\newblock Springer, 2008.

\bibitem{clm08}
M.~H. Carvalho, C.~L. Lucchesi, and U.~S.~R. Murty.
\newblock Generating simple bricks and braces.
\newblock Technical Report IC-08-16, Institute of Computing, University of
  Campinas, July 2008.

\bibitem{clm15}
M.~H. Carvalho, C.~L. Lucchesi, and U.~S.~R. Murty.
\newblock Thin edges in braces.
\newblock {\em The Electronic J.~of Combin.}, 22, 2015.

\bibitem{lire91}
C.~H.~C. Little and F.~Rendl.
\newblock Operations preserving the {P}faffian property of a graph.
\newblock {\em J. Austral. Math. Soc. (Series A)}, 50:248--275, 1991.

\bibitem{lova87}
L.~Lov\'asz.
\newblock Matching structure and the matching lattice.
\newblock {\em J.~Combin.~Theory Ser.~B}, 43:187--222, 1987.

\bibitem{lopl86}
L.~Lov\'asz and M.~D. Plummer.
\newblock {\em Matching Theory}.
\newblock Number~29 in Annals of Discrete Mathematics. Elsevier Science, 1986.

\bibitem{mccu01}
W.~McCuaig.
\newblock Brace generation.
\newblock {\em J.~Graph Theory}, 38:124--169, 2001.

\bibitem{mccu04}
W.~McCuaig.
\newblock P{\'o}lya's permanent problem.
\newblock {\em The Electronic J.~of Combin.}, 11:R79, 2004.

\bibitem{mrst97}
W.~McCuaig, N.~Robertson, P.~D. Seymour, and R.~Thomas.
\newblock Permanents, {P}faffian orientations and even directed circuits.
\newblock {\em STOC '97 Proceedings of the twenty-ninth annual ACM symposium on
  Theory of computing}, pages 402--405, 1997.

\bibitem{noth06}
S.~Norine and R.~Thomas.
\newblock Minimal bricks.
\newblock {\em J.~Combin.~Theory Ser.~B}, 96:505--513, 2006.

\bibitem{noth07}
S.~Norine and R.~Thomas.
\newblock Generating bricks.
\newblock {\em J.~Combin.~Theory Ser.~B}, 97:769--817, 2007.

\bibitem{rst99}
N.~Robertson, P.~D. Seymour, and R.~Thomas.
\newblock Permanents, {P}faffian orientations and even directed circuits.
\newblock {\em Ann.~of Math.~(2)}, 150:929--975, 1999.

\bibitem{vaya89}
V.~V. Vazirani and M.~Yannakakis.
\newblock Pfaffian orientation of graphs, 0,1 permanents, and even cycles in
  digraphs.
\newblock {\em Discrete Applied Math.}, 25:179--180, 1989.

\end{thebibliography}

\end{document}